\numberwithin{equation}{section}
\DeclareMathOperator*{\argmin}{argmin}
\begin{document}

\title{Sparsity-Guided Multi-Parameter Selection in $\ell_1$-Regularized Models via a Fixed-Point Proximity Approach\thanks{
Qianru Liu is supported in part by the Natural Science Foundation of China under grant 12571562, and the Doctor Foundation of Henan University
of Technology, China (No.2023BS061), and the Innovative
Funds Plan of Henan University of Technology (No.2021ZKCJ11).
Rui Wang is supported in part by the Natural Science Foundation of China under grants 12171202 and 12571562. Yuesheng Xu is supported in part by the US National Science Foundation under grant DMS-2208386, and the US National Institutes of Health under grant R21CA263876. The computer codes that generate the numerical results presented in this paper can be found in the following website: https://github.com/qrliuMath/Multi-ParameterChoicesL1.}
}


\author{Qianru Liu \and Rui Wang \and Yuesheng Xu 
}


\institute{Qianru Liu \at
             School of Mathematics and Statistics, Henan University of Technology, Zhengzhou, 450001, People’s Republic of China 
             \\ \email{liuqr23@haut.edu.cn}   
           \and
           Rui Wang \at School of Mathematics, Jilin University, Changchun 130012, People’s Republic of China\\
           \email{rwang11@jlu.edu.cn} \\ All correspondence should be sent to this author
           \and Yuesheng Xu \at Department of Mathematics and Statistics, Old Dominion University, Norfolk, VA 23529, United States of America\\ 
               \email{y1xu@odu.edu} 
}

\date{Received: date / Accepted: date}

\maketitle

\begin{abstract}
We study a regularization framework that combines a convex fidelity term with multiple $\ell_1$-based regularizers, each linked to a distinct linear transform. This multi-penalty model enhances flexibility in promoting structured sparsity. We analyze how the choice of regularization parameters governs the sparsity of solutions under the given transforms and derive a precise relationship between the parameters and resulting sparsity patterns. This  insight enables the development of an iterative strategy for selecting parameters to achieve prescribed sparsity levels. 
A key computational challenge arises in practice: effective parameter tuning requires simultaneous access to the regularized solution and two auxiliary vectors derived from the sparsity analysis. 
To address this, we propose a fixed-point proximity algorithm that jointly computes all three vectors. Together with our theoretical characterization, this algorithm forms the basis of a practical multi-parameter selection scheme. Numerical experiments demonstrate that the proposed method reliably produces solutions with desired sparsity patterns and strong approximation accuracy.


\keywords{Parameter selection strategy \and Multi-parameter regularization \and $\ell_1$-norm \and Sparsity \and Fixed-point proximity algorithm}
\subclass{65F22 \and 65K10 \and 68Q32}
\end{abstract}

\section{Introduction}\label{intro}

Sparse regularization using the $\ell_1$-norm has become a foundational technique in modern data analysis, signal processing, and machine learning. It is particularly effective in high-dimensional and ill-posed settings, where promoting sparsity is essential for achieving interpretable, robust, and computationally efficient solutions. Traditionally, $\ell_1$-regularization has been applied using a single penalty term, as in the LASSO model \cite{Chen2001,tibshirani1996regression,TibshiraniThe2011}. However, in many practical problems, especially those involving structured signals or multiple modalities, a single regularization parameter is often insufficient to capture the underlying sparsity patterns.

To overcome this limitation, extensive research has focused multi-parameter regularization frameworks that incorporate multiple $\ell_1$-based penalties, each linked to a distinct linear transform \cite{Afonso2010,Friedman2007,Rapaport2008,Selesnick2014Simultaneous,Shen2024Sparse,Tibshirani2005,Tolosi2011,Yu2015}. This multi-transform setting allows different structural features of the signal to be promoted in parallel. For example, one transform may promote sparsity in the time domain while another targets frequency components. By assigning separate regularization parameters to each transform, the model gains enhanced flexibility to adapt to complex data structures, while still mitigating the ill-posedness of the underlying problem.

In $\ell_1$-regularized models, selecting  regularization parameters to guide sparsity remains a significant challenge. While empirical evidence indicates that parameter choices influence solution sparsity, practical methods for tuning multiple parameters to achieve desired sparsity levels are still lacking. Most theoretical results focus on the single-parameter case, relating regularization strength to sparsity \cite{bach2011optimization,koh2007interior,shi2011concentration,tibshirani2012degrees,zou2007degrees}, but often assume knowledge of the true solution and offer limited practical utility. Our recent work \cite{Liu2023parameter} introduced a principled method for single-parameter selection. Extending this to the multi-parameter setting is challenging due to the complex and interdependent influence of multiple parameters on both sparsity and solution structure.

Building on the single-parameter selection strategy from \cite{Liu2023parameter}, this paper develops a practical approach for selecting multiple $\ell_1$-regularization parameters to achieve a prescribed level of structured sparsity. We begin by establishing a theoretical foundation for how each regularization parameter influences sparsity under associated transform matrices. Unlike the single-parameter setting, the multi-parameter framework enables independent control of sparsity across different transforms.
Using convex analysis, we characterize the relationship between the regularization parameters and the solution's sparsity under each transform. In the special case where the transform matrices are scaled identities and the fidelity term is block separable, this leads to an explicit parameter selection rule that guarantees desired block-wise sparsity in the solution.
For the general case, where such direct rules are not available, we propose an iterative algorithm to select the parameters. This scheme accounts for the interactions among parameters and ensures the solution achieves the targeted sparsity levels. By explicitly handling parameter interactions, the proposed approach enhances both flexibility and performance in achieving the desired sparsity.

The iterative scheme faces a key computational challenge at each step: in addition to computing the regularized solution, it must also evaluate two auxiliary vectors. This becomes particularly demanding when the fidelity term is non-differentiable and the block matrix formed from the transform matrices lacks full row rank—conditions under which explicit formulas for the auxiliary vectors are unavailable. To address this, inspired by the fixed-point framework in \cite{li2015multi}, we formulate all three components—the solution and the two auxiliary vectors—as a system of fixed-point equations involving proximity operators. We then develop a fixed-point proximity algorithm that computes them simultaneously.
Importantly, this algorithm is not limited to the aforementioned difficult case; it also applies to broader settings, including problems with differentiable fidelity terms or full-rank transform matrices. We provide a rigorous convergence analysis of the algorithm.
Building on this foundation, we integrate the fixed-point algorithm with our sparsity characterization to propose a robust multi-parameter selection strategy that ensures the regularized solution achieves a prescribed sparsity level.

In our recent works \cite{Liu2023parameter,Shen2024Sparse}, we studied parameter selection strategies for $\ell_1$-based sparse regularization. This paper makes two key contributions that extend those efforts:

\begin{itemize}
\item[$\bullet$] We propose a strategy for selecting multiple regularization parameters in models with a convex fidelity term and several $\ell_1$-type regularizers, each composed with a distinct linear transform. Unlike \cite{Liu2023parameter}, which focused on a single-parameter setting, our algorithm handles multiple parameters and explicitly accounts for their interactions. While \cite{Shen2024Sparse} also considered multi-parameter selection, its method was limited by the complexity of nonconvex optimization involving deep neural networks and large datasets. In contrast, our convex framework enables a more principled and refined selection algorithm.

\item[$\bullet$] Our method removes two key limitations present in \cite{Liu2023parameter} and \cite{Shen2024Sparse}, which assume differentiable fidelity terms and full row rank transform matrices. We develop a generalized iterative scheme that remains effective when the fidelity term is nondifferentiable or the block matrix formed by the transforms is rank-deficient, thereby substantially broadening the applicability of multi-parameter selection.
\end{itemize}

This paper is organized into seven sections and an appendix. Section \ref{Multi} introduces the general multi-parameter regularization model and presents several motivating examples. In Section \ref{Sparsity}, we characterize how each regularization parameter influences the sparsity of the solution under its corresponding transform, with a specialization to the case of degenerate identity transforms. Building on these insights, Section \ref{Itera} proposes an iterative scheme to jointly compute the regularized solution and select parameters that achieve prescribed sparsity levels.
To address the computational challenges of this scheme, Section \ref{fixed} develops a fixed-point proximity algorithm and establishes its convergence. Section \ref{Numer} presents six numerical experiments. The first four demonstrate the effectiveness of the algorithm in achieving the prescribed sparsity across different models, while the remaining two provide numerical evidence supporting the theoretical convergence assumptions and examine the sensitivity of the method to key hyperparameters. Section \ref{Concl} concludes the paper, and Appendix A provides closed-form expressions for the proximity operators used in the algorithm.

\section{Multi-parameter regularization with the $\ell_1$-norm}\label{Multi}
In this section, we present the multi-parameter regularization problem studied in this paper. We begin by formulating the general problem and then review several optimization models of practical relevance, demonstrating how they can be expressed within this unified framework.

We start with describing the multi-parameter regularization problem.  For each $m\in\mathbb{N}$, let  $\mathbb{N}_m:=\{1, 2, \ldots,m\}$ and set $\mathbb{N}_0:=\emptyset$. For each vector $\mathbf{x}:=[x_j:j\in\mathbb{N}_m]\in\mathbb{R}^m$, we define its $\ell_1$-norm by $\|\mathbf{x}\|_1:=\sum_{j\in \mathbb{N}_m}|x_j|$. Let $n,d\in\mathbb{N}$. Suppose that $\bm{\psi}:\mathbb{R}^n\to\mathbb{R}_+:=[0,+\infty)$ is a convex function and for each $j\in\mathbb{N}_d$, $m_j\in\mathbb{N}$ and  $\mathbf{B}_{j}$ is an $m_j\times n$ real matrix. We consider the multi-parameter regularization problem 
\begin{equation}\label{optimization_problem_under_Bj}
\min
\left\{\bm{\psi}(\mathbf{u})+\sum_{j\in\mathbb{N}_d}\lambda_j\|\mathbf{B}_{j}\mathbf{u}\|_{1}:\mathbf{u}\in\mathbb{R}^n\right\},
\end{equation}
where $\lambda_j>0,$ $j\in\mathbb{N}_d,$ is a sequence of positive regularization parameters. 

The multi-parameter regularization problem \eqref{optimization_problem_under_Bj} appears in many application areas. Below, we present several examples of it. In imaging reconstruction problems, a combination of multiple regularizers
was used to encourage their solution to simultaneously exhibit the characteristics enforced by each of them. For example, a combination of frame-based synthesis and analysis $\ell_1$-norm regularizers was proposed in \cite{Afonso2010} for an imaging deblurring problem. Specifically, for $m,n,s\in\mathbb{N}$, we assume that $\mathbf{A}\in\mathbb{R}^{m\times s}$ represents a periodic convolution, $\mathbf{W}\in \mathbb{R}^{s\times n}$ is a synthesis operator whose columns contain the elements of a frame, and $\mathbf{P}\in \mathbb{R}^{m\times s}$ is an analysis operator of a tight Parseval frame satisfying $\mathbf{P}^{\top}\mathbf{P}=\mathbf{I}_s$, where $\mathbf{I}_s$ denotes the identity matrix of order $s$. Let $\mathbf{y}\in\mathbb{R}^m$ be  observed data. The regularization problem combining the synthesis and analysis $\ell_1$-norm regularizers has the form
\begin{equation}\label{CSA}
\min\left\{\frac{1}{2}\|\mathbf{AW}\mathbf{u}-\mathbf{y}\|_2^2+\lambda_1\|\mathbf{u}\|_1+\lambda_2\|\mathbf{PW}\mathbf{u}\|_1:\mathbf{u}\in\mathbb{R}^n\right\}.
\end{equation}
Clearly, problem \eqref{CSA} may be identified as a special case of \eqref{optimization_problem_under_Bj} with $
\bm{\psi}(\mathbf{u}):=\frac{1}{2}\|\mathbf{AW}\mathbf{u}-\mathbf{y}\|_2^2$ for $\  \mathbf{u}\in\mathbb{R}^n$, $\mathbf{B}_{1}:=\mathbf{I}_n$ and $\mathbf{B}_{2}:=\mathbf{PW}.$

As a generalization of the lasso regularized model \cite{tibshirani1996regression}, the fused lasso regularized model was proposed in \cite{Tibshirani2005} for problems with features that can be ordered in some meaningful way. Let $p,n\in\mathbb{N}$. Suppose that a prediction problem with $p$ cases has outcomes $y_i$, $i\in\mathbb{N}_p$ and features $x_{ij}$, $i\in\mathbb{N}_p$, $j\in\mathbb{N}_n$. Let $\mathbf{X}:=[x_{ij}:i\in\mathbb{N}_p,j\in\mathbb{N}_n]$ be the $p\times n$ matrix of features and $\mathbf{y}:=[y_i:i\in\mathbb{N}_p]\in\mathbb{R}^p$,  $\mathbf{u}\in\mathbb{R}^n$ be the vectors of outcomes and coefficients, respectively. The fused lasso regularized model is formulated as 
\begin{equation}\label{fused_lasso}
\min\left\{\frac{1}{2}\|\mathbf{X}\mathbf{u}-\mathbf{y}\|_2^2+\lambda_1\|\mathbf{u}\|_1+\lambda_2\|\mathbf{Du}\|_1:\mathbf{u}\in\mathbb{R}^n\right\},
\end{equation}
where $\mathbf{D}:=[d_{ij}:i\in \mathbb{N}_{n-1},j\in\mathbb{N}_n]$ is the $(n-1)\times n$ first order difference matrix with $d_{ii}=-1,$ $d_{i,i+1}=1$ for $i\in\mathbb{N}_{n-1}$ and $0$ otherwise. By introducing 
$\bm{\psi}(\mathbf{u}):=\frac{1}{2}\|\mathbf{X}\mathbf{u}-\mathbf{y}\|_2^2$ for $\mathbf{u}\in\mathbb{R}^n$, $\mathbf{B}_{1}:=\mathbf{I}_{n}$ and $\mathbf{B}_{2}:=\mathbf{D},
$
the fused lasso model \eqref{fused_lasso} can be rewritten in the form of \eqref{optimization_problem_under_Bj}. By penalizing the
$\ell_1$-norm of both the coefficients and their successive differences, the fused lasso regularized model encourages the sparsity
of the coefficients and also the sparsity of their differences. As a special case, the fused lasso signal approximation \cite{Friedman2007} has the form \eqref{fused_lasso} with the feature matrix $\mathbf{X}$ being the identity matrix.

Filtering noisy data was considered in \cite{Selesnick2014Simultaneous} for the case where the underlying signal comprises a low-frequency component and  a sparse or sparse-derivative component. Specifically, assume that the noisy data $y(t)$ can be modeled as 
\begin{equation}\label{noisy-data-form}
y(t)=f(t)+u(t)+\eta(t),
\ t \in\mathbb{N}_n,
\end{equation}
where $f$ is a low-pass signal, $u$ is a sparse and sparse-derivative signal and $\eta$ is stationary white Gaussian noise. Given noisy data of the form \eqref{noisy-data-form}, one seeks the estimate of $f$ and $u$ individually. For this purpose, we first solve the compound sparse denoising problem
\begin{equation}\label{CSD}
\min\left\{\frac{1}{2}\|\mathbf{\mathbf{H}}(\mathbf{y}-\mathbf{u})\|_2^2+\lambda_1\|\mathbf{u}\|_1+\lambda_2\|\mathbf{D}\mathbf{u}\|_1:\mathbf{u}\in\mathbb{R}^{n}\right\}  
\end{equation}
to obtain the estimate $\mathbf{u}^*$ of $u$. Here, $\mathbf{y}:=[y(t):t\in\mathbb{N}_n]$ and  $\mathbf{H}$ is a high-pass filter matrix. 
We then get the estimate $\mathbf{f}^*$ of $f$ as $\mathbf{f}^*:=(\tilde{\mathbf{I}}-\mathbf{H})(\mathbf{y}-\mathbf{u}^*)$. It is clear that the compound sparse denoising model \eqref{CSD} has the form \eqref{optimization_problem_under_Bj} with 
\begin{equation}\label{fidelity-CSD}
\bm{\psi}(\mathbf{u}):=\frac{1}{2}\|\mathbf{\mathbf{H}}(\mathbf{y}-\mathbf{u})\|_2^2,\ \  \mbox{for all}\  \ \mathbf{u}\in\mathbb{R}^n,
\end{equation} 
and $\mathbf{B}_{1}:=\mathbf{I}_n$,  $\mathbf{B}_{2}:=\mathbf{D}$.

Using a technique similar to that used in the fused lasso regularized model, the fused SVM was proposed for classification of array-based comparative genomic hybridization
(arrayCGH) data \cite{Rapaport2008,Tolosi2011}.  Given training data $\{(\mathbf{x}_j,y_j):j\in\mathbb{N}_p\}$ composed of sample points $\{\mathbf{x}_j:j\in\mathbb{N}_p\}\subset \mathbb{R}^n$ and labels $\{y_j:j\in\mathbb{N}_p\}\subset\{-1,1\}$. The aim of binary classification is to find a decision function $f(\mathbf{x}):=\mathbf{u}^{\top}\mathbf{x}$, $\mathbf{x}\in\mathbb{R}^n$ predicting the class $y_j=-1$ or $y_j=1$. The class prediction for a profile
$\mathbf{x}$ is then $1$ if $f(\mathbf{x})\geq 0$ and $-1$ otherwise. The fused SVM based on the hinge loss function has the form
\begin{equation}\label{fused-SVM}
\min\left\{\sum_{j\in\mathbb{N}_p}\mathrm{max}(0,1-y_j\mathbf{u}^{\top}\mathbf{x}_j)+\lambda_1\|\mathbf{u}\|_1+\lambda_2\|\mathbf{D}\mathbf{u}\|_1:\mathbf{u}\in\mathbb{R}^n\right\}.  
\end{equation}
We rewrite model \eqref{fused-SVM} in the form \eqref{optimization_problem_under_Bj} as follows. We define the matrix $\mathbf{X}:=[\mathbf{x}_j:j\in\mathbb{N}_p]^{\top}$, the matrix $\mathbf{Y}:=\mathrm{diag}(y_j:j\in\mathbb{N}_p)$ and the function $\bm{\phi}(\mathbf{z}):=\sum_{j\in\mathbb{N}_p}\mathrm{max}\{0,1-z_j\}$ for $\mathbf{z}:=[z_j:j\in\mathbb{N}_p]\in\mathbb{R}^p$. Then by introducing 
$\bm{\psi}(\mathbf{u}):=\bm{\phi}(\mathbf{YXu})$ for $\mathbf{u}\in\mathbb{R}^{n}$,  $\mathbf{B}_{1}:=\mathbf{I}_n$ and $\mathbf{B}_{2}:=\mathbf{D}$,
the fused SVM model \eqref{fused-SVM} can be represented in the form \eqref{optimization_problem_under_Bj}.

In many imaging and signal processing applications, it is advantageous to apply non-uniform regularization across different spatial locations or signal components in order to better preserve salient features such as edges. This consideration naturally leads to models with spatially varying, or component-wise, regularization parameters. A standard approach is to introduce a diagonal matrix of regularization weights, resulting in formulations of the form
\begin{equation}\label{spatial-model}
\min \left\{ \bm{\psi}(\mathbf{u}) + \|\mathbf{\Lambda} \mathbf{u}\|_1:\mathbf{u}\in\mathbb{R}^n \right\} \quad \text{or} \quad \min \left\{ \bm{\psi}(\mathbf{u}) + \|\mathbf{\Lambda}  \mathbf{D} \mathbf{u}\|_1:\mathbf{u}\in\mathbb{R}^n \right\},
\end{equation} 
where $\mathbf{\Lambda} := \operatorname{diag}(\lambda_1, \ldots, \lambda_q)$, with $q = n$ in the first model and $q = n-1$ for the second. 

The models in \eqref{spatial-model} assign distinct regularization parameters $\lambda_i$ to individual components $u_i$ or each differences $(\mathbf{D}\mathbf{u})_i$, enabling fine-grained control over local smoothing. Such parameterization is particularly important in image restoration, where homogeneous regions and edges typically require different levels of regularization. These spatially adaptive models can be viewed as special cases of \eqref{optimization_problem_under_Bj}. Specifically, the first model corresponds to \eqref{optimization_problem_under_Bj} with $d = n$, where for each $j \in \mathbb{N}_n$, the matrix $\mathbf{B}_j$ is the $1 \times n$ row vector whose $j$-th entry is $1$ and whose remaining entries are zero. Similarly, the second model corresponds to \eqref{optimization_problem_under_Bj} with $d = n-1$, where for each $j \in \mathbb{N}_{n-1}$, $\mathbf{B}_j$ is the $j$-th row of the difference matrix $\mathbf{D}$. 

To facilitate the technical developments in subsequent sections, Table \ref{tab:key_notations} summarizes the key notations used throughout the paper; each notation is also defined at its first appearance in the text.
\begin{table}[h!]
\centering
\caption{Summary of key notations.}
\label{tab:key_notations}
\begin{tabular}{@{}p{3cm}p{11cm}@{}}
\toprule
\textbf{Symbol} & \textbf{Description} \\
\midrule
$\Omega_{s,l}$ & The set of all vectors in $\mathbb{R}^s$ with exactly $l$ nonzero components, that is, vectors of sparsity level $l$, for $l \in \mathbb{Z}_{s+1}$. \\ \addlinespace[0.3em]
$p_j$ & Cumulative dimension, defined recursively by $p_0 := 0$ and $p_j := \sum_{i \in \mathbb{N}_j} m_i$, for $j \in \mathbb{N}_d$. \\ \addlinespace[0.3em]
$\mathbf{z}_j$ & The $j$-th subvector of a vector $\mathbf{z} \in \mathbb{R}^{p_d}$, where $\mathbf{z}$ is partitioned according to $(m_1, \ldots, m_d)$. Specifically, $\mathbf{z}_j := [z_{p_{j-1}+i} : i \in \mathbb{N}_{m_j}] \in \mathbb{R}^{m_j}$ for $j \in \mathbb{N}_d$. \\ \addlinespace[0.3em]
$\mathbf{B}$ & The column-block matrix obtained by concatenating all transformation matrices, defined as $\mathbf{B}:=[\mathbf{B}_{j}:j\in\mathbb{N}_d]\in\mathbb{R}^{p_d\times n}$. \\ \addlinespace[0.3em]
$r$ & The rank of matrix $\mathbf{B}$, satisfying $0 < r \leq \min\{p_d, n\}$. \\ \addlinespace[0.3em]
$\mathbf{B}'$ & The matrix in $\mathbb{R}^{n \times (p_d + n - r)}$ that reconstructs the original variable $\mathbf{u}$ from the concatenated vector $\begin{bmatrix}
\mathbf{z}\\
\mathbf{v}\end{bmatrix}$ via $\mathbf{u}=\mathbf{B}'{\scalebox{0.9}{$\begin{bmatrix}
\mathbf{z}\\
\mathbf{v}\end{bmatrix}$}}$. This matrix is constructed using the singular value decomposition of $\mathbf{B}$.
\\ \addlinespace[0.3em]
$\mathbf{I}^{'}_{j}$ & A degenerated identity (block-selection) matrix in $\mathbb{R}^{m_j \times (p_d + n - r)}$, defined by $\mathbf{I}^{'}_{j}:=[\mathbf{0}_{m_j\times p_{j-1}}\  \mathbf{I}_{m_j}\ \mathbf{0}_{m_j\times (p_d-p_j)} \  \mathbf{0}_{m_j\times (n-r)}]$.\\ \addlinespace[0.3em]
$\mathcal{R}(\mathbf{M})$ &	The range of a matrix $\mathbf{M}$.\\ \addlinespace[0.3em]
$\mathbb{M}$ & The constraint set in the reformulated problem, defined as $\mathbb{M} := \mathcal{R}(\mathbf{B}) \times \mathbb{R}^{n-r} \subseteq \mathbb{R}^{p_d + n - r}$. \\ \addlinespace[0.3em]
$\mathcal{N}(\mathbf{M})$&	The null space of the matrix $\mathbf{M}$.\\ \addlinespace[0.3em]
$\mathbf{M}_{(i)}$&	The $i$-th column of the matrix $\mathbf{M}$.\\ \addlinespace[0.3em]
$\partial f(\mathbf{x})$ & The subdifferential of the convex function $f$ at $\mathbf{x}$.
\\ \addlinespace[0.3em]
$\mathcal{S}_{s,q}$ & A partition of $\mathbb{N}_s$ into $q$ disjoint, nonempty subsets, denoted by $\mathcal{S}_{s,q} = \{S_{s,1}, S_{s,2}, \ldots, S_{s,q}\}$. \\ \addlinespace[0.3em]
$S_{s,k}$	&The $k$-th subset in the partition $\mathcal{S}_{s,q}$, with cardinality $s_k$.\\ \addlinespace[0.3em]
$\mathbf{w}_{S_{s,k}}$ &	The subvector of $\mathbf{w} \in \mathbb{R}^s$ corresponding to the indices in $S_{s,k}$.\\ \addlinespace[0.3em]
$\mathcal{S}_{s,q}$-block separable	& A function  $\bm{\phi}:\mathbb{R}^s\rightarrow\mathbb{R}$ is called $\mathcal{S}_{s,q}$-block separable if there exist functions $\bm{\phi}_k:\mathbb{R}^{s_k}\rightarrow\mathbb{R}$, $k\in\mathbb{N}_q$ such that
$\bm{\phi}(\mathbf{w})=\sum\limits_{k\in\mathbb{N}_q}\bm{\phi}_{k}(\mathbf{w}_{S_{s,k}}),\ \mathbf{w}\in\mathbb{R}^s.$\\ \addlinespace[0.3em]
$\bm{\psi}_j$	&The $j$-th component function in the block-separable fidelity term $\bm{\psi}$.\\ \addlinespace[0.3em]
$\bm{\psi}_{j,k}$	& The $(j,k)$-th component function when $\bm{\psi}_j$ is further block-separable. \\ \addlinespace[0.3em]
$\mathbf{A}_{[j]}$ & The submatrix of $\mathbf{A}$ consisting of the columns indexed by the $j$-th subset of the partition $\mathcal{S}_{n,d}$. \\ \addlinespace[0.3em]
$\mathbf{A}_{[j,k]}$&The submatrix of $\mathbf{A}_{[j]}$ corresponding to its $k$-th block.\\ \addlinespace[0.3em]
$\gamma_{j,i}(\mathbf{u})$ & For $j \in \mathbb{N}_d$ and $i \in \mathbb{N}_{m_j}$, a quantity determining the sparsity pattern under the transform $\mathbf{B}_j$, defined by $\gamma_{j,i}(\mathbf{u}):=\left|(\mathbf{B}_{(p_{j-1}+i)}')^\top\mathbf{a}+b_{p_{j-1}+i}\right|$, where $\mathbf{a} \in \partial\bm{\psi}(\mathbf{u})$ and $\mathbf{b} \in \mathcal{N}(\mathbf{B}^\top)$. The order of $\gamma_{j,i}(\mathbf{u})$ governs the selection of $\lambda_j$ in the iterative scheme.\\ \addlinespace[0.3em]
$l_j^*$	& The prescribed sparsity level for the solution under $\mathbf{B}_j$ for $j \in \mathbb{N}_d$.\\ \addlinespace[0.3em]
$l_j^k$	& The actual sparsity level of the solution $\mathbf{u}^k$ at iteration $k$ under the transform $\mathbf{B}_j$, for $j \in \mathbb{N}_d$. \\ \addlinespace[0.3em]
$\mathbb{S}_{+}^s$	& The set of all $s \times s$ symmetric positive definite matrices.\\ \addlinespace[0.3em]
$\| \cdot \|_{\mathbf{H}}$ & The weighted $\ell_2$-norm induced by a positive definite matrix $\mathbf{H}$, defined as $\|\mathbf{x}\|_{\mathbf{H}} := \sqrt{\langle \mathbf{x}, \mathbf{H} \mathbf{x} \rangle}$.\\ \addlinespace[0.3em]
$\text{prox}_{f,\mathbf{H}}$ & The proximity operator of a convex function $f:\mathbb{R}^s\to \overline{\mathbb{R}}$ with respect to a positive definite matrix $\mathbf{H}$, defined by $\text{prox}_{f,\mathbf{H}}(\mathbf{x}):=\argmin\left\{\frac{1}{2}\|\mathbf{y}-\mathbf{x}\|_{\mathbf{H}}^2+f(\mathbf{y}):\mathbf{y}\in\mathbb{R}^s\right\}.$ When $\mathbf{H} = \mathbf{I}$, it is abbreviated as $\mathrm{prox}_f$. \\ \addlinespace[0.3em]
$f^*$ &	The conjugate (Fenchel dual) function of a convex function $f:\mathbb{R}^s\to \overline{\mathbb{R}}$, defined by 
$f^*(\mathbf{y}):=\sup\{\langle \mathbf{x},\mathbf{y}\rangle-f(\mathbf{x}):\mathbf{x}\in\mathbb{R}^s\}$, for all $\mathbf{y}\in\mathbb{R}^s.$\\
\bottomrule
\end{tabular}
\end{table}

\section{Sparsity characterization of regularized solutions under transform matrices}\label{Sparsity}

In this section, we  investigate the relationship between multiple regularization parameters and the sparsity of the corresponding regularized solutions to problem \eqref{optimization_problem_under_Bj}, with respect to the transform matrices $\mathbf{B}_j$, $j \in \mathbb{N}_d$. Unlike the single-parameter setting, the multi-parameter formulation in \eqref{optimization_problem_under_Bj} allows the sparsity of the solution to be evaluated separately for each transform matrix $\mathbf{B}_j$.


We begin by recalling the definition of sparsity level for a vector in $\mathbb{R}^s$. For $s\in\mathbb{N}$, we set $\mathbb{Z}_s:=\{0,1,\ldots,s-1\}$. A vector $\mathbf{x}\in\mathbb{R}^s$ is said to have sparsity of level $l\in\mathbb{Z}_{s+1}$ if it has exactly $l$ nonzero components. To further describe the sparsity of a vector in $\mathbb{R}^s$, the notion of the sparsity partition of $\mathbb{R}^s$ was introduced in \cite{xu2023}. Specifically, by using the canonical basis $\mathbf{e}_{s,j},j\in\mathbb{N}_s,$ for $\mathbb{R}^s$, we introduce $s+1$ numbers of subsets of $\mathbb{R}^s$ by $\Omega_{s,0}:=\{\mathbf{0}\in\mathbb{R}^s\}$ and 
$$
\Omega_{s,l}:=\left\{\sum_{j\in\mathbb{N}_l}x_{k_j}\mathbf{e}_{s,k_j}
:x_{k_j}\in\mathbb{R}\setminus{\{0\}},\ \mathrm{for} \
1\leq k_1<k_2<\cdots< k_l\leq s\right\},
\ \mathrm{for} \ l\in\mathbb{N}_s.
$$
It is clear that the sets $\Omega_{s,l},l\in \mathbb{Z}_{s+1}$, form a partition for $\mathbb{R}^s$ and for each $l\in\mathbb{Z}_{s+1}$, $\Omega_{s,l}$ coincides with the set of all vectors in $\mathbb{R}^s$ having sparsity of level $l$. 

To characterize the solution's sparsity under the transform matrices, we reformulate the regularization problem \eqref{optimization_problem_under_Bj} as an equivalent form. Let $p_0:=0$ and $
p_j:=\sum_{i\in\mathbb{N}_j}m_i$, $j\in\mathbb{N}_d$.  We decompose a vector $\mathbf{z}:=[z_k:k\in\mathbb{N}_{p_d}]\in\mathbb{R}^{p_d}$ into $d$ sub-vectors by setting \begin{equation*}\label{z_j}
\mathbf{z}_j:=[z_{p_{j-1}+i}: i\in\mathbb{N}_{m_j}]\in\mathbb{R}^{m_j},\ \mbox{for all}\ j\in\mathbb{N}_d.
\end{equation*}
By introducing a column block matrix 
\begin{equation}\label{block-matrix-B}
\mathbf{B}:=[\mathbf{B}_{j}:j\in\mathbb{N}_d]\in\mathbb{R}^{p_d\times n}, 
\end{equation}
we write $\mathbf{B}\mathbf{u}$ for the block column vector $[\mathbf{B}_j\mathbf{u}: j\in\mathbb{N}_d]$.
Accordingly, we first rewrite the regularization problem \eqref{optimization_problem_under_Bj} as 
\begin{equation}\label{optimization_problem_under_Bj_1}
\min
\left\{\bm{\psi}(\mathbf{u})+\sum_{j\in\mathbb{N}_d}\lambda_j\|(\mathbf{B}\mathbf{u})_{j}\|_{1}:\mathbf{u}\in\mathbb{R}^n\right\}.
\end{equation}

We further convert problem \eqref{optimization_problem_under_Bj_1} into an equivalent form by leveraging a change of variables. To achieve this, we consider inverting the linear system
\begin{equation}\label{LinearSystem}
    \mathbf{B}\mathbf{u}=\mathbf{z}, \ \ \mbox{for each}\ \ \mathbf{z}\in\mathcal{R}(\mathbf{B}).
\end{equation}
Here, $\mathcal{R}(\mathbf{B})$ denotes the range of $\mathbf{B}$. It is known from \cite{Bjork1996,horn2012matrix} that the general solution of the linear system \eqref{LinearSystem} can be represented by the pseudoinverse of  $\mathbf{B}$. An alternative form of the general solution was provided in \cite{Liu2023parameter}. To describe this result, we recall that if $\mathbf{B}$ has the rank $r$ satisfying $0<r\leq \mathrm{min}\{p_d,n\}$, then $\mathbf{B}$ has the SVD as
$\mathbf{B}=\mathbf{U}\mathbf{\Lambda} \mathbf{V}^{\top}$, where $\mathbf{U}$ and $\mathbf{V}$ are $p_d\times p_d$ and $n\times n$ orthogonal matrices, respectively, and $\mathbf{\Lambda}$ is a $p_d\times n$ diagonal matrix with the nonzero diagonal entries $\sigma_1\geq\cdots\geq\sigma_r>0$, which are the nonzero singular values of $\mathbf{B}$. In order to represent the general solution of linear system \eqref{LinearSystem}, we define  
an $n\times(p_d+n-r)$ matrix by employing the SVD of $\mathbf{B}$. Specifically, we denote by $\widetilde{\mathbf{U}}_{r}\in\mathbb{R}^{p_d\times r}$ the matrix composed of the first $r$ columns of $\mathbf{U}$ and define an $n\times(p_d+n-r)$ block diagonal matrix by setting
$$
\mathbf{U}':=\mathrm{diag}\left(\widetilde{\mathbf{U}}_{r}^{\top}, \mathbf{I}_{n-r}\right).
$$ 
We also introduce a diagonal matrix of order $n$ by 
$$
\mathbf{\Lambda}':=\mathrm{diag}\left(\sigma_1^{-1},\sigma_2^{-1},\ldots,\sigma_r^{-1},1,\ldots,1\right).
$$
Using these matrices, we define an $n\times(p_d+n-r)$ matrix by $\mathbf{B}'
:=\mathbf{V}\mathbf{\Lambda}'\mathbf{U}'.$ As has been shown in \cite{Liu2023parameter}, for each solution $\mathbf{u}$ of system  \eqref{LinearSystem}, there exists a unique vector $\mathbf{v}\in\mathbb{R}^{n-r}$ such that 
\begin{equation}\label{solution-Bu=z}
\mathbf{u}=\mathbf{B}'{\scalebox{0.9}{$\begin{bmatrix}
\mathbf{z}\\
\mathbf{v}\end{bmatrix}$}}.
\end{equation}
As a result, the mapping $\mathcal{B}$, defined for each $\mathbf{u}\in\mathbb{R}^n$ by
$\mathcal{B}\mathbf{u}:=\scalebox{0.8}{$\begin{bmatrix}
\mathbf{z}\\
\mathbf{v}\end{bmatrix}$},$
where $\mathbf{z}:=\mathbf{B}\mathbf{u}$ and $\mathbf{v}\in\mathbb{R}^{n-r}$ satisfies equation \eqref{solution-Bu=z}, is bijective from $\mathbb{R}^n$ onto $\mathcal{R}(\mathbf{B})\times\mathbb{R}^{n-r}$. 

By making use of the change of variables defined by equation \eqref{solution-Bu=z}, we reformulate problem \eqref{optimization_problem_under_Bj_1} as an equivalent multi-parameter regularization problem. We set
$\mathbb{M}:=\mathcal{R}(\mathbf{B})\times\mathbb{R}^{n-r}$
and let $\iota_{\mathbb{M}}:\mathbb{R}^{p_d+n-r}\rightarrow\overline{\mathbb{R}}:=\mathbb{R}\cup\{+\infty\}$ denote the indicator function of $\mathbb{M}$, that is, $\iota_{\mathbb{M}}(\mathbf{w})=0$ if $\mathbf{w}\in \mathbb{M}$, and $+\infty$ otherwise. For each $j\in\mathbb{N}_d$, we  introduce a degenerated identity matrix by
\begin{equation}\label{I-j}
\mathbf{I}^{'}_{j}:=[\mathbf{0}_{m_j\times p_{j-1}}\  \mathbf{I}_{m_j}\ \mathbf{0}_{m_j\times (p_d-p_j)} \  \mathbf{0}_{m_j\times (n-r)}]\in\mathbb{R}^{m_j\times (p_d+n-r)},
\end{equation}
where $\mathbf{0}_{s\times t}$ denotes the zero matrix of order $s\times t$. 
We show in the following lemma that the regularization problem \eqref{optimization_problem_under_Bj} is equivalent to the regularization problem 
\begin{equation}\label{optimization_problem_under_Bj_3}
\min
\left\{\bm{\psi}\circ\mathbf{B}'(\mathbf{w})+\iota_{\mathbb{M}}(\mathbf{w})+\sum_{j\in\mathbb{N}_d}\lambda_j\|\mathbf{I}^{'}_{j}\mathbf{w}\|_{1}:\mathbf{w}\in\mathbb{R}^{p_d+n-r}\right\}.
\end{equation}

\begin{lemma}\label{equi_mini}
If matrix $\mathbf{B}$ has the form \eqref{block-matrix-B} and $\mathbf{B}'$,  $\mathcal{B}$ are defined as above, then $\mathbf{u}^*$ is a solution of the regularization problem \eqref{optimization_problem_under_Bj} if and only if $\mathcal{B}\mathbf{u}^*$ is a solution of the regularization problem  \eqref{optimization_problem_under_Bj_3}.
\end{lemma}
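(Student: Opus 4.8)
The plan is to exhibit a value-preserving correspondence between the feasible points of the two problems, induced by the change-of-variables map $\mathcal{B}$, and then transfer minimizers in both directions. To organize the argument I would write $F(\mathbf{u}):=\bm{\psi}(\mathbf{u})+\sum_{j\in\mathbb{N}_d}\lambda_j\|\mathbf{B}_j\mathbf{u}\|_1$ for the objective of \eqref{optimization_problem_under_Bj} (equivalently \eqref{optimization_problem_under_Bj_1}), and $G(\mathbf{w}):=\bm{\psi}\circ\mathbf{B}'(\mathbf{w})+\iota_{\mathbb{M}}(\mathbf{w})+\sum_{j\in\mathbb{N}_d}\lambda_j\|\mathbf{I}'_j\mathbf{w}\|_1$ for the objective of \eqref{optimization_problem_under_Bj_3}. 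Since $\mathcal{B}$ is a bijection from $\mathbb{R}^n$ onto $\mathbb{M}$ and $G(\mathbf{w})=+\infty$ whenever $\mathbf{w}\notin\mathbb{M}$, the entire lemma will reduce to the single identity $G(\mathcal{B}\mathbf{u})=F(\mathbf{u})$ for every $\mathbf{u}\in\mathbb{R}^n$.

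First I would verify this identity term by term, using $\mathcal{B}\mathbf{u}=\begin{bmatrix}\mathbf{z}\\\mathbf{v}\end{bmatrix}$ with $\mathbf{z}=\mathbf{B}\mathbf{u}$. For the fidelity term I would invoke \eqref{solution-Bu=z}, which states precisely that $\mathbf{B}'\mathcal{B}\mathbf{u}=\mathbf{u}$, so that $\bm{\psi}\circ\mathbf{B}'(\mathcal{B}\mathbf{u})=\bm{\psi}(\mathbf{u})$. For the indicator term, $\mathcal{B}\mathbf{u}\in\mathbb{M}$ by construction, hence $\iota_{\mathbb{M}}(\mathcal{B}\mathbf{u})=0$. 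For each regularization term, the zero/identity pattern in the definition \eqref{I-j} of the degenerated identity matrix shows that $\mathbf{I}'_j\mathcal{B}\mathbf{u}$ extracts exactly the $j$-th block $\mathbf{z}_j=(\mathbf{B}\mathbf{u})_j=\mathbf{B}_j\mathbf{u}$, whence $\|\mathbf{I}'_j\mathcal{B}\mathbf{u}\|_1=\|\mathbf{B}_j\mathbf{u}\|_1$. Summing the three contributions gives $G(\mathcal{B}\mathbf{u})=F(\mathbf{u})$.

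With this identity established, I would conclude in both directions. If $\mathbf{u}^*$ minimizes $F$, then any $\mathbf{w}$ with $G(\mathbf{w})<+\infty$ lies in $\mathbb{M}$ and may be written uniquely as $\mathbf{w}=\mathcal{B}\mathbf{u}$ by the bijectivity of $\mathcal{B}$, so $G(\mathbf{w})=F(\mathbf{u})\ge F(\mathbf{u}^*)=G(\mathcal{B}\mathbf{u}^*)$, proving that $\mathcal{B}\mathbf{u}^*$ minimizes $G$. Conversely, if $\mathcal{B}\mathbf{u}^*$ minimizes $G$, then for every $\mathbf{u}\in\mathbb{R}^n$ one has $F(\mathbf{u})=G(\mathcal{B}\mathbf{u})\ge G(\mathcal{B}\mathbf{u}^*)=F(\mathbf{u}^*)$, so $\mathbf{u}^*$ minimizes $F$.

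I do not anticipate a genuine obstacle: the substance is bookkeeping about the change of variables already set up before the lemma. The single point requiring care is that $\mathcal{B}$ maps onto $\mathbb{M}$ rather than all of $\mathbb{R}^{p_d+n-r}$, which is exactly why the indicator $\iota_{\mathbb{M}}$ must appear in \eqref{optimization_problem_under_Bj_3}: it confines the reformulated minimization to the range of $\mathcal{B}$ and thereby legitimizes the one-to-one correspondence of feasible points. The only other detail worth checking explicitly is that $\mathbf{I}'_j$ recovers $\mathbf{B}_j\mathbf{u}$ and not some other block, which follows at once from the block structure in \eqref{I-j}.
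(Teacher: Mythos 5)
Your proposal is correct and follows essentially the same route as the paper: both rest on the bijectivity of $\mathcal{B}$ from $\mathbb{R}^n$ onto $\mathbb{M}$, the identities $\mathbf{B}'\mathcal{B}\mathbf{u}=\mathbf{u}$ and $\mathbf{I}'_j(\mathcal{B}\mathbf{u})=(\mathbf{B}\mathbf{u})_j$, and the observation that the indicator $\iota_{\mathbb{M}}$ confines the reformulated problem to the range of $\mathcal{B}$. The only cosmetic difference is that you fold the paper's two-step argument (constrained reformulation, then indicator-function rewriting) into a single value-preserving identity $G(\mathcal{B}\mathbf{u})=F(\mathbf{u})$.
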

\begin{proof}
We first prove that $\mathbf{u}^*$ is a solution of problem \eqref{optimization_problem_under_Bj} if and only if $\mathcal{B}\mathbf{u}^*$ is a solution of the constrained optimization problem  \begin{equation}\label{optimization_problem_under_Bj_2}
\min
\left\{\bm{\psi}\circ\mathbf{B}'(\mathbf{w})+\sum_{j\in\mathbb{N}_d}\lambda_j\|\mathbf{I}^{'}_{j}\mathbf{w}\|_{1}:\mathbf{w}\in\mathbb{M}\right\}.
\end{equation} 
As has been shown in \cite{Liu2023parameter}, $\mathcal{B}$ is a bijective mapping from $\mathbb{R}^n$ to $\mathbb{M}$. It suffices to verify that for all $\mathbf{u}\in\mathbb{R}^n$ there holds 
$$
\bm{\psi}(\mathbf{u})+\sum_{j\in\mathbb{N}_d}\lambda_j\|(\mathbf{B}\mathbf{u})_{j}\|_{1}=\bm{\psi}\circ\mathbf{B}'(\mathcal{B}\mathbf{u})+\sum_{j\in\mathbb{N}_d}\lambda_j\|\mathbf{I}^{'}_{j}(\mathcal{B}\mathbf{u})\|_{1}.
$$
By the definition of mapping $\mathcal{B}$, we get that 
$\mathbf{B}'\mathcal{B}\mathbf{u}=\mathbf{u}$ and $\mathbf{I}^{'}_{j}(\mathcal{B}\mathbf{u})=(\mathbf{B}\mathbf{u})_j$, which confirm the validity of the equation
above.

We next show that problem  \eqref{optimization_problem_under_Bj_2} as a constrained optimization problem is equivalent to the unconstrained optimization problem \eqref{optimization_problem_under_Bj_3}. By the definition of the indicator function $\iota_{\mathbb{M}}$,  the minimum of problem  \eqref{optimization_problem_under_Bj_3} will be assumed at an element $\mathbf{w}\in\mathbb{M}$. Thus, problem  \eqref{optimization_problem_under_Bj_3} can be
rewritten as problem \eqref{optimization_problem_under_Bj_2}. 
\end{proof}

Below, we consider how the regularization parameters $\lambda_j$, $j\in\mathbb{N}_d$ influence the sparsity of the solution of problem  \eqref{optimization_problem_under_Bj_3}. To characterize the solution of problem \eqref{optimization_problem_under_Bj_3}, we require the notion of the subdifferential of a convex function. Suppose that $f: \mathbb{R}^s\to \overline{\mathbb{R}}$ is a proper convex function. The subdifferential of $f$ at $\mathbf{x}\in{\rm dom}(f):=\{\mathbf{y}\in\mathbb{R}^s:f(\mathbf{y})<+\infty\}$ is defined by
$$
\partial f(\mathbf{x}):=\{\mathbf{y}\in\mathbb{R}^s: \ f(\mathbf{z})\geq f(\mathbf{x})+\langle \mathbf{y},\mathbf{z}-\mathbf{x}\rangle, \ \mathrm{for} \ \mathrm{all} \ \mathbf{z}\in\mathbb{R}^s\}.
$$
It is known \cite{zalinescu2002convex} that for two convex functions $f$ and $g$ on $\mathbb{R}^s$, if $g$ is continuous on $\mathbb{R}^s$ then 
$\partial (f+g)(\mathbf{x})
=\partial f(\mathbf{x}) +\partial g(\mathbf{x})$, for all $\mathbf{x}\in\mathbb{R}^s$. 
We also describe the chain rule of the subdifferential \cite{Showalter1997}. Suppose
that $f:\mathbb{R}^s\rightarrow\overline{\mathbb{R}}$ is a convex function and
$\mathbf{M}$ is an $s\times t$ matrix. If $f$ is continuous at some point of the range of
$\mathbf{M}$, then for all $\mathbf{x}\in\mathbb{R}^t$ 
\begin{equation}\label{chain-rule}
\partial (f\circ \mathbf{M})(\mathbf{x})=\mathbf{M}^{\top}\partial f(\mathbf{M}(\mathbf{x})).
\end{equation}
The Fermat rule \cite{zalinescu2002convex} states that if a proper convex function
$f:\mathbb{R}^s\rightarrow\overline{\mathbb{R}}$ has a minimum at $\mathbf{x}\in\mathbb{R}^s$ if and only if  $\mathbf{0}\in \partial f(\mathbf{x})$. 

In the next lemma, we characterize the sparsity of the solution to problem \eqref{optimization_problem_under_Bj_3}. For an $s\times t$ matrix $\mathbf{M}$, we denote by $\mathcal{N}(\mathbf{M})$ the null space of $\mathbf{M}$ and for each $i\in\mathbb{N}_{t}$, denote by $\mathbf{M}_{(i)}$ the $i$th column of $\mathbf{M}$.

\begin{lemma}\label{sparsity-equi-mini}
Suppose that $\bm{\psi}:\mathbb{R}^n\to\mathbb{R}_+$ is a convex function  and for each $j\in\mathbb{N}_d$, $\mathbf{B}_{j}$ is an $m_j\times n$ matrix. Let $\mathbf{B}$ be defined as in \eqref{block-matrix-B}.  Then problem \eqref{optimization_problem_under_Bj_3}
with $\lambda_j>0$, $j\in \mathbb{N}_d$, has a solution $\mathbf{w}^{*}:=\scalebox{0.8}{$\begin{bmatrix}
\mathbf{z^*}\\
\mathbf{v^*}\end{bmatrix}$}$, where for each $j\in \mathbb{N}_d,$ $\mathbf{z}_j^{*}=\sum_{i\in\mathbb{N}_{l_j}}z_{p_{j-1}+k_i}^*\mathbf{e}_{m_j,k_i}\in \Omega_{m_j,l_j},$ for some $l_{j}\in\mathbb{Z}_{m_j+1}$ if and only if there exist  $\mathbf{a}\in\partial\bm{\psi}(\mathbf{B}'\mathbf{w}^*)$ and $\mathbf{b}:=[b_j:j\in\mathbb{N}_{p_d}]\in\mathcal{N}(\mathbf{B}^{\top})$ such that 
\begin{equation}\label{sparsity-equi-mini-1}
(\mathbf{B}'_{(i)})^{\top}\mathbf{a}={0}, \ i\in\mathbb{N}_{p_d+n-r}\setminus\mathbb{N}_{p_d}, 
\end{equation}
and for each $j\in\mathbb{N}_d$
\begin{equation}\label{sparsity-equi-mini-2}
\lambda_j=-\left( (\mathbf{B}_{(p_{j-1}+k_i)}')^\top\mathbf{a}+b_{p_{j-1}+k_i}\right)\mathrm{sign}(z_{p_{j-1}+k_i}^*),\  i\in \mathbb{N}_{l_j},
\end{equation}
\begin{equation}\label{sparsity-equi-mini-3}
\lambda_j\geq\left|(\mathbf{B}_{(p_{j-1}+i)}')^\top\mathbf{a}+b_{p_{j-1}+i}\right|, \ i\in \mathbb{N}_{m_j}\setminus\{k_i:i\in \mathbb{N}_{l_j}\}.
\end{equation}
\end{lemma}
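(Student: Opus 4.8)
The plan is to characterize the minimizers of \eqref{optimization_problem_under_Bj_3} through the Fermat rule. Writing $F$ for the objective function in \eqref{optimization_problem_under_Bj_3}, the vector $\mathbf{w}^*$ is a solution if and only if $\mathbf{0}\in\partial F(\mathbf{w}^*)$. The entire argument then reduces to computing $\partial F(\mathbf{w}^*)$ explicitly and reading off its coordinates, so that the inclusion $\mathbf{0}\in\partial F(\mathbf{w}^*)$ translates precisely into the existence of multipliers $\mathbf{a}$ and $\mathbf{b}$ satisfying \eqref{sparsity-equi-mini-1}--\eqref{sparsity-equi-mini-3}.

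First I would split $F$ as the sum of $\iota_{\mathbb{M}}$ and the finite-valued convex function $g:=\bm{\psi}\circ\mathbf{B}'+\sum_{j\in\mathbb{N}_d}\lambda_j\|\mathbf{I}'_j(\cdot)\|_1$. Since $\bm{\psi}$ maps into $\mathbb{R}_+$ and is therefore continuous on all of $\mathbb{R}^n$, the composition $g$ is continuous on $\mathbb{R}^{p_d+n-r}$, hence at every point of $\mathrm{dom}(\iota_{\mathbb{M}})=\mathbb{M}$; the sum rule then gives $\partial F(\mathbf{w}^*)=\partial\iota_{\mathbb{M}}(\mathbf{w}^*)+\partial g(\mathbf{w}^*)$, and because each summand of $g$ is continuous the sum rule applies again inside $g$. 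I would then evaluate the three pieces. By the chain rule \eqref{chain-rule}, $\partial(\bm{\psi}\circ\mathbf{B}')(\mathbf{w}^*)=(\mathbf{B}')^\top\partial\bm{\psi}(\mathbf{B}'\mathbf{w}^*)$, contributing a term $(\mathbf{B}')^\top\mathbf{a}$ with $\mathbf{a}\in\partial\bm{\psi}(\mathbf{B}'\mathbf{w}^*)$. Applying \eqref{chain-rule} to each $\ell_1$ term gives $\lambda_j(\mathbf{I}'_j)^\top\partial\|\cdot\|_1(\mathbf{z}_j^*)$; since $(\mathbf{I}'_j)^\top$ embeds its argument into the $j$-th block of the first $p_d$ coordinates and places zeros elsewhere, the total $\ell_1$ contribution is a vector whose first $p_d$ entries form the concatenation of the $\lambda_j$-scaled subgradients of $\|\cdot\|_1$ at the $\mathbf{z}_j^*$ and whose last $n-r$ entries vanish.

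For the indicator term I would use that $\mathbb{M}$ is a linear subspace, so $\partial\iota_{\mathbb{M}}(\mathbf{w}^*)$ equals the normal cone $\mathbb{M}^\perp$, independently of $\mathbf{w}^*\in\mathbb{M}$. Computing $\mathbb{M}^\perp$ for the product $\mathbb{M}=\mathcal{R}(\mathbf{B})\times\mathbb{R}^{n-r}$ and invoking $\mathcal{R}(\mathbf{B})^\perp=\mathcal{N}(\mathbf{B}^\top)$ yields $\mathbb{M}^\perp=\mathcal{N}(\mathbf{B}^\top)\times\{\mathbf{0}\}$, which is exactly the source of the multiplier $\mathbf{b}\in\mathcal{N}(\mathbf{B}^\top)$ acting only on the first $p_d$ coordinates. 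With all three pieces assembled, $\mathbf{0}\in\partial F(\mathbf{w}^*)$ becomes a coordinatewise system. The coordinates indexed by $i>p_d$ receive no contribution from either the $\ell_1$ terms or $\mathbf{b}$, forcing $(\mathbf{B}'_{(i)})^\top\mathbf{a}=0$, which is \eqref{sparsity-equi-mini-1}. For a coordinate $p_{j-1}+k_i$ with $z^*_{p_{j-1}+k_i}\neq0$, the $\ell_1$ subgradient is pinned to $\mathrm{sign}(z^*_{p_{j-1}+k_i})$; solving the resulting scalar equation and multiplying through by this sign (using that its square is $1$) gives \eqref{sparsity-equi-mini-2}. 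For a coordinate $p_{j-1}+i$ at which the entry vanishes, the $\ell_1$ subgradient ranges over $[-1,1]$, and solving for it subject to $|\cdot|\leq1$ and $\lambda_j>0$ produces the bound \eqref{sparsity-equi-mini-3}. Since every step is an equivalence, reversing them shows the conditions are also sufficient.

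I expect the delicate point to be the treatment of the nonsmooth indicator function: the identity $\partial(f+g)=\partial f+\partial g$ fails without a qualification, so one must carefully group the continuous part $g$ against $\iota_{\mathbb{M}}$ and appeal to the continuity hypothesis recalled before the lemma. Equally important is the product-space normal cone computation, because it is precisely the vanishing of the $\mathbb{R}^{n-r}$ block of $\mathbb{M}^\perp$ that makes \eqref{sparsity-equi-mini-1} an equation free of any multiplier $\mathbf{b}$; conflating $\mathbb{M}$ with all of $\mathbb{R}^{p_d+n-r}$, or mishandling the block structure of $(\mathbf{I}'_j)^\top$, would corrupt exactly these coordinates.
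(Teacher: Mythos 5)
Your proposal is correct and follows essentially the same route as the paper's proof: Fermat's rule, the subdifferential sum rule justified by continuity of the non-indicator part, the chain rule \eqref{chain-rule} for the composed terms, the identification $\partial\iota_{\mathbb{M}}(\mathbf{w}^*)=\mathbb{M}^\perp=\mathcal{N}(\mathbf{B}^\top)\times\{\mathbf{0}\}$, and a coordinatewise reading of the resulting inclusion using the explicit form of $\partial\|\cdot\|_1(\mathbf{z}_j^*)$. The points you flag as delicate (the qualification for the sum rule and the vanishing $\mathbb{R}^{n-r}$ block of the normal cone, which is what makes \eqref{sparsity-equi-mini-1} free of $\mathbf{b}$) are exactly the ones the paper's argument relies on.
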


\begin{proof}
By the Fermat rule, we have that  
$\mathbf{w}^*:=\scalebox{0.8}{$\begin{bmatrix}
\mathbf{z}^*\\
\mathbf{v}^*\end{bmatrix}$}$ is a solution of problem \eqref{optimization_problem_under_Bj_3} if and only if  
$$
\mathbf{0}\in\partial\left(\bm{\psi}\circ\mathbf{B}'+\iota_{\mathbb{M}}+\sum_{j\in\mathbb{N}_d}\lambda_j\|\cdot\|_{1}\circ\mathbf{I}^{'}_{j}\right)(\mathbf{w}^*),
$$
which by the continuity of the $\ell_1$-norm and the chain rule \eqref{chain-rule} of the subdifferential is equivalent to
\begin{equation}\label{Fermat-rule-chain-rule}
\mathbf{0}\in(\mathbf{B}')^\top\partial\bm{\psi}(\mathbf{B}'\mathbf{w}^*)+\partial\iota_{\mathbb{M}}(\mathbf{w}^*)+\sum_{j\in\mathbb{N}_d}\lambda_j(\mathbf{I}^{'}_{j})^\top\partial \|\cdot\|_1(\mathbf{z}_j^*).
\end{equation}
Let $\mathbb{M}^{\bot}$ denote the  orthogonal complement of $\mathbb{M}$. It is known that    $\partial\iota_{\mathbb{M}}(\mathbf{w})=\mathbb{M}^{\bot}$ for all $\mathbf{w}\in \mathbb{M}$. Recalling that $\mathbb{M}:=\mathcal{R}(\mathbf{B})\times\mathbb{R}^{n-r}$, we get that 
$$
\mathbb{M}^{\bot}=(\mathcal{R}(\mathbf{B}))^{\bot}\times(\mathbb{R}^{n-r})^{\bot}=\mathcal{N}(\mathbf{B}^{\top})\times\{\mathbf{0}\}.
$$
As a result, 
$$
\partial\iota_{\mathbb{M}}(\mathbf{w})=\mathcal{N}(\mathbf{B}^{\top})\times\{\mathbf{0}\},\ \ \mbox{for all}\ \ \mathbf{w}\in \mathbb{M}.
$$
Substituting the above equation with $\mathbf{w}:=\mathbf{w}^*$ into the inclusion relation \eqref{Fermat-rule-chain-rule}, we conclude that  $\mathbf{w}^*:=\scalebox{0.8}{$\begin{bmatrix}
\mathbf{z}^*\\
\mathbf{v}^*\end{bmatrix}$}$ is a solution of problem \eqref{optimization_problem_under_Bj_3} if and only if there exist  $\mathbf{a}\in\partial\bm{\psi}(\mathbf{B}'\mathbf{w}^*)$ and $\mathbf{b}\in\mathcal{N}(\mathbf{B}^{\top})$ such that 
\begin{equation}\label{Fermat-rule-chain-rule-1}
  -(\mathbf{B}')^\top\mathbf{a}-\raisebox{-0.3ex}{\scalebox{0.8}{$ 
  \begin{bmatrix}\mathbf{b}\\ \mathbf{0}\end{bmatrix}$}}\in\sum_{j\in\mathbb{N}_d}\lambda_j(\mathbf{I}^{'}_{j})^\top\partial \|\cdot\|_1(\mathbf{z}_j^*).  
\end{equation}
Note that for each $j\in\mathbb{N}_d$
$$
\mathbf{z}_j^{*}=\sum_{i\in\mathbb{N}_{l_j}}z_{p_{j-1}+k_i}^*\mathbf{e}_{m_j,k_i}\in \Omega_{m_j,l_j},\ \mbox{with}\ z_{p_{j-1}+k_i}^*\in\mathbb{R}\setminus{\{0\}}, \ i\in \mathbb{N}_{l_j},
$$ 
with which we obtain that
\begin{equation*}\label{subdiff-1norm-z*}
\partial\|\cdot\|_1(\mathbf{z}_j^{*})
=\left\{\mathbf{x}\in\mathbb{R}^{m_j}: x_{k_i}=\mathrm{sign}(z_{p_{j-1}+k_i}^*), i\in\mathbb{N}_{l_j}\ \mbox{and}\ |x_i|\leq 1, i\in\mathbb{N}_{m_j}\setminus\{k_i:i\in \mathbb{N}_{l_j}\}\right\}.
\end{equation*}
Combining inclusion relation \eqref{Fermat-rule-chain-rule-1} with the above equation, we have that $(\mathbf{B}'_{(i)})^{\top}\mathbf{a}={0}$ for all $i\in\mathbb{N}_{p_d+n-r}\setminus\mathbb{N}_{p_d}$, which coincides with equation \eqref{sparsity-equi-mini-1} and for each $j\in\mathbb{N}_d$
$$
-(\mathbf{B}_{(p_{j-1}+k_i)}')^\top\mathbf{a}-b_{p_{j-1}+k_i}=\lambda_j\mathrm{sign}(z_{p_{j-1}+k_i}^*),\ i\in \mathbb{N}_{l_j},
$$
$$
-(\mathbf{B}_{(p_{j-1}+i)}')^\top\mathbf{a}-b_{p_{j-1}+i}\in[-\lambda_j,\lambda_j], \ i\in \mathbb{N}_{m_j}\setminus\{k_i:i\in \mathbb{N}_{l_j}\},
$$
which are equivalent to equation \eqref{sparsity-equi-mini-2} and inequality \eqref{sparsity-equi-mini-3}, respectively.
\end{proof}

Combining Lemmas \ref{equi_mini} with \ref{sparsity-equi-mini}, we now establish the relation between the regularization parameters and the sparsity of the solution to problem \eqref{optimization_problem_under_Bj}.
\begin{theorem}\label{sparsity_original}
Suppose that $\bm{\psi}:\mathbb{R}^n\to\mathbb{R}_+$ is a convex function  and for each $j\in\mathbb{N}_d$, $\mathbf{B}_{j}$ is an $m_j\times n$ matrix. Let $\mathbf{B}$ be defined as in \eqref{block-matrix-B}. Then problem \eqref{optimization_problem_under_Bj}
with $\lambda_j>0, j\in \mathbb{N}_d,$ has a solution $\mathbf{u}^{*}\in\mathbb{R}^n$, where for each $j\in \mathbb{N}_d,$ $\mathbf{B}_{j}\mathbf{u}^{*}=\sum_{i\in\mathbb{N}_{l_j}}z_{p_{j-1}+k_i}^*\mathbf{e}_{m_j,k_i}\in \Omega_{m_j,l_j},$ for some $l_{j}\in\mathbb{Z}_{m_j+1}$ if and only if there exist  $\mathbf{a}\in\partial\bm{\psi}(\mathbf{u}^*)$ and $\mathbf{b}:=[b_j:j\in\mathbb{N}_{p_d}]\in\mathcal{N}(\mathbf{B}^{\top})$ such that \eqref{sparsity-equi-mini-1}, \eqref{sparsity-equi-mini-2} and \eqref{sparsity-equi-mini-3} hold. 
\end{theorem}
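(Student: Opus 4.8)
The plan is to derive the theorem directly by chaining together the two preceding lemmas, using the bijection $\mathcal{B}$ to transport both the optimality and the sparsity structure between problems \eqref{optimization_problem_under_Bj} and \eqref{optimization_problem_under_Bj_3}. First I would invoke Lemma \ref{equi_mini}, which asserts that $\mathbf{u}^*$ solves \eqref{optimization_problem_under_Bj} if and only if $\mathbf{w}^* := \mathcal{B}\mathbf{u}^*$ solves \eqref{optimization_problem_under_Bj_3}. This reduces the question to the characterization of the solutions of \eqref{optimization_problem_under_Bj_3}, which is exactly the content of Lemma \ref{sparsity-equi-mini}.

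The crucial step is to verify that the sparsity data match up under the bijection. Writing $\mathbf{w}^* = \begin{bmatrix} \mathbf{z}^* \\ \mathbf{v}^* \end{bmatrix}$, I recall from the definition of $\mathcal{B}$ and the identities established in the proof of Lemma \ref{equi_mini} that $\mathbf{z}_j^* = \mathbf{I}^{'}_{j}\mathbf{w}^* = (\mathbf{B}\mathbf{u}^*)_j = \mathbf{B}_j\mathbf{u}^*$ for each $j \in \mathbb{N}_d$, and also that $\mathbf{B}'\mathbf{w}^* = \mathbf{u}^*$. The first identity shows that the hypothesis $\mathbf{B}_j\mathbf{u}^* = \sum_{i\in\mathbb{N}_{l_j}} z_{p_{j-1}+k_i}^*\mathbf{e}_{m_j,k_i} \in \Omega_{m_j,l_j}$ of the present theorem is precisely the hypothesis $\mathbf{z}_j^* \in \Omega_{m_j,l_j}$ of Lemma \ref{sparsity-equi-mini}, with the same sparsity level $l_j$ and the same nonzero positions $k_1<\cdots<k_{l_j}$. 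The second identity shows that $\partial\bm{\psi}(\mathbf{B}'\mathbf{w}^*) = \partial\bm{\psi}(\mathbf{u}^*)$, so the subgradient $\mathbf{a}$ furnished by Lemma \ref{sparsity-equi-mini} lies in exactly the set $\partial\bm{\psi}(\mathbf{u}^*)$ demanded in the statement.

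With these identifications in hand, applying Lemma \ref{sparsity-equi-mini} to $\mathbf{w}^*$ yields the desired equivalence: $\mathbf{w}^*$, and hence by Lemma \ref{equi_mini} also $\mathbf{u}^*$, is a solution with the prescribed sparsity pattern if and only if there exist $\mathbf{a} \in \partial\bm{\psi}(\mathbf{u}^*)$ and $\mathbf{b} \in \mathcal{N}(\mathbf{B}^{\top})$ satisfying \eqref{sparsity-equi-mini-1}, \eqref{sparsity-equi-mini-2}, and \eqref{sparsity-equi-mini-3}. Since these three conditions are stated verbatim in both formulations and are written purely in terms of $\mathbf{a}$, $\mathbf{b}$, and the columns of $\mathbf{B}'$, no further rewriting is needed.

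I do not anticipate a genuine obstacle, as the theorem is essentially a transcription of Lemma \ref{sparsity-equi-mini} back through the change of variables. The one point requiring care is the index bookkeeping: confirming that the index set $\{k_i\}$, the sparsity level $l_j$, and the signs of the nonzero components are preserved under the identification $\mathbf{z}_j^* = \mathbf{B}_j\mathbf{u}^*$, so that \eqref{sparsity-equi-mini-2} and \eqref{sparsity-equi-mini-3} refer to the same entries in both problems. This is immediate because $\mathbf{I}^{'}_{j}$ acts as the identity on the $j$-th block, so it neither permutes nor rescales the components of $\mathbf{z}_j^*$.
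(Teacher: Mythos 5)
Your proposal is correct and follows essentially the same route as the paper's own proof: chain Lemma \ref{equi_mini} with Lemma \ref{sparsity-equi-mini}, transport the sparsity pattern via the identity $\mathbf{z}_j^*=\mathbf{B}_j\mathbf{u}^*$, and use $\mathbf{B}'\mathcal{B}\mathbf{u}^*=\mathbf{u}^*$ to identify $\partial\bm{\psi}(\mathbf{B}'\mathbf{w}^*)$ with $\partial\bm{\psi}(\mathbf{u}^*)$. Your additional remark on the index bookkeeping under $\mathbf{I}^{'}_{j}$ is a harmless elaboration of what the paper leaves implicit.
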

\begin{proof}    
It follows from Lemma \ref{equi_mini} that $\mathbf{u}^{*}\in\mathbb{R}^n$, where for each $j\in \mathbb{N}_d,$ $\mathbf{B}_{j}\mathbf{u}^{*}=\sum_{i\in\mathbb{N}_{l_j}}z_{p_{j-1}+k_i}^*\mathbf{e}_{m_j,k_i}\in \Omega_{m_j,l_j}$ is a solution of problem \eqref{optimization_problem_under_Bj} if and only if $\mathcal{B}\mathbf{u}^*:=
\scalebox{0.8}{$\begin{bmatrix}
\mathbf{z}^*\\
\mathbf{v}^*\end{bmatrix}$}$ is a solution of problem \eqref{optimization_problem_under_Bj_3} and for each $j\in \mathbb{N}_d$,  $\mathbf{z}_j^{*}=\sum_{i\in\mathbb{N}_{l_j}}z_{p_{j-1}+k_i}^*\mathbf{e}_{m_j,k_i}\in \Omega_{m_j,l_j}$. The latter guaranteed by Lemma \ref{sparsity-equi-mini} is equivalent to that there exist $\mathbf{a}\in\partial\bm{\psi}(\mathbf{B}'\mathcal{B}\mathbf{u}^*)$ and $\mathbf{b}\in\mathcal{N}(\mathbf{B}^{\top})$ such that \eqref{sparsity-equi-mini-1}, \eqref{sparsity-equi-mini-2} and \eqref{sparsity-equi-mini-3} hold. It suffices to show that $\mathbf{a}\in\partial\bm{\psi}(\mathbf{u}^*)$. 
This is done by noting that $\mathbf{B}'\mathcal{B}\mathbf{u}^*=\mathbf{u}^*$. 
\end{proof}

Theorem \ref{sparsity_original}  provides
a characterization of the multiple regularization parameter $\lambda_j$, $j\in\mathbb{N}_d$, with which problem \eqref{optimization_problem_under_Bj} has a solution with
sparsity of a certain level $l_j$ under each transform matrix $\mathbf{B}_{j}$. By specifying the sparsity level of the solution under each transform matrix $\mathbf{B}_{j}$ to be $l_j^*$, our goal is
to find regularization parameters $\lambda_j^*$, $j\in\mathbb{N}_d$, satisfying
conditions \eqref{sparsity-equi-mini-1}, \eqref{sparsity-equi-mini-2} and \eqref{sparsity-equi-mini-3}. However, since these conditions depend on the corresponding
solution, the characterization stated in Theorem \ref{sparsity_original} can not be used directly as a multi-parameter
choice strategy. Motivated by Theorem \ref{sparsity_original}, an iterative scheme to be developed in section \ref{Itera} will enable us to choose multiple regularization parameters with which a minimizer of problem \eqref{optimization_problem_under_Bj} has a prescribed sparsity level under each transform matrix.

The next result concerns the special case that matrix $\mathbf{B}$ defined by \eqref{block-matrix-B} has full row rank, that is, $\mathrm{rank}(\mathbf{B})=p_d$.
\begin{corollary}\label{sparsity-rank}
    Suppose that $\bm{\psi}:\mathbb{R}^n\to\mathbb{R}_+$ is a convex function  and for each $j\in\mathbb{N}_d$, $\mathbf{B}_{j}$ is an $m_j\times n$ matrix. If $\mathbf{B}$ defined by \eqref{block-matrix-B} satisfies $\mathrm{rank}(\mathbf{B})=p_d$, then problem \eqref{optimization_problem_under_Bj} with $\lambda_j>0, j\in \mathbb{N}_d,$ has a solution $\mathbf{u}^{*}\in\mathbb{R}^n$, where for each $j\in \mathbb{N}_d,$ $\mathbf{B}_{j}\mathbf{u}^{*}=\sum_{i\in\mathbb{N}_{l_j}}z_{p_{j-1}+k_i}^*\mathbf{e}_{m_j,k_i}\in \Omega_{m_j,l_j},$ for some $l_{j}\in\mathbb{Z}_{m_j+1}$ if and only if there exists $\mathbf{a}\in\partial\bm{\psi}(\mathbf{u}^*)$ such that 
\begin{equation}\label{sparsity-equi-mini-1-rank}
(\mathbf{B}'_{(i)})^{\top}\mathbf{a}={0}, \ i\in\mathbb{N}_{n}\setminus\mathbb{N}_{p_d}, 
\end{equation}
and for each $j\in\mathbb{N}_d$
\begin{equation}\label{sparsity-equi-mini-2-rank}
\lambda_j=-(\mathbf{B}_{(p_{j-1}+k_i)}')^\top\mathbf{a}\mathrm{sign}(z_{p_{j-1}+k_i}^*),\  i\in \mathbb{N}_{l_j},
\end{equation}
\begin{equation}\label{sparsity-equi-mini-3-rank}
\lambda_j\geq\left|(\mathbf{B}_{(p_{j-1}+i)}')^\top\mathbf{a}\right|, \ i\in \mathbb{N}_{m_j}\setminus\{k_i:i\in \mathbb{N}_{l_j}\}.
\end{equation}
\end{corollary}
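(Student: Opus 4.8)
The plan is to derive this corollary directly from Theorem \ref{sparsity_original} by specializing to the full-row-rank case, so that no fresh optimality analysis is needed. First I would record the two structural simplifications induced by the hypothesis $\mathrm{rank}(\mathbf{B})=p_d$. Since $r:=\mathrm{rank}(\mathbf{B})=p_d$, the ambient dimension of the reformulated problem collapses: $p_d+n-r=n$, so the index range $\mathbb{N}_{p_d+n-r}\setminus\mathbb{N}_{p_d}$ appearing in \eqref{sparsity-equi-mini-1} becomes exactly $\mathbb{N}_n\setminus\mathbb{N}_{p_d}$, matching the index set in \eqref{sparsity-equi-mini-1-rank}.

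The crucial observation is that full row rank forces the auxiliary vector $\mathbf{b}$ to vanish. Indeed, $\mathbf{B}^{\top}$ is an $n\times p_d$ matrix with $\mathrm{rank}(\mathbf{B}^{\top})=\mathrm{rank}(\mathbf{B})=p_d$ equal to its number of columns, whence $\mathcal{N}(\mathbf{B}^{\top})=\{\mathbf{0}\}$. Consequently, the only admissible choice for $\mathbf{b}\in\mathcal{N}(\mathbf{B}^{\top})$ in Theorem \ref{sparsity_original} is $\mathbf{b}=\mathbf{0}$, that is, $b_k=0$ for every $k\in\mathbb{N}_{p_d}$.

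With $\mathbf{b}=\mathbf{0}$ in hand, I would substitute into the three conditions of Theorem \ref{sparsity_original}: setting $b_{p_{j-1}+k_i}=0$ in \eqref{sparsity-equi-mini-2} yields \eqref{sparsity-equi-mini-2-rank}, setting $b_{p_{j-1}+i}=0$ in \eqref{sparsity-equi-mini-3} yields \eqref{sparsity-equi-mini-3-rank}, and \eqref{sparsity-equi-mini-1} already coincides with \eqref{sparsity-equi-mini-1-rank} under the dimension identity $p_d+n-r=n$ noted above. Because the existence of $\mathbf{b}\in\mathcal{N}(\mathbf{B}^{\top})$ is now automatically satisfied by the unique element $\mathbf{0}$, the quantifier in Theorem \ref{sparsity_original} reduces to the bare existence of $\mathbf{a}\in\partial\bm{\psi}(\mathbf{u}^*)$, exactly as the corollary asserts. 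There is essentially no genuine obstacle in this argument; the only point requiring a moment's care is the rank identity $\mathrm{rank}(\mathbf{B}^{\top})=\mathrm{rank}(\mathbf{B})$ together with the consequent triviality of $\mathcal{N}(\mathbf{B}^{\top})$, which is standard linear algebra, and the bookkeeping of the dimension collapse $p_d+n-r=n$ throughout the three index ranges.
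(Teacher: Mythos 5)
Your proposal is correct and follows essentially the same route as the paper: invoke Theorem \ref{sparsity_original}, observe that $\mathrm{rank}(\mathbf{B})=p_d$ forces $\mathcal{N}(\mathbf{B}^{\top})=\{\mathbf{0}\}$ so that $\mathbf{b}=\mathbf{0}$, and note the index-range collapse $p_d+n-r=n$ in \eqref{sparsity-equi-mini-1}. The only cosmetic difference is that the paper derives $\mathcal{N}(\mathbf{B}^{\top})=(\mathcal{R}(\mathbf{B}))^{\bot}=\{\mathbf{0}\}$ via the orthogonal complement of the range, whereas you argue via the column rank of $\mathbf{B}^{\top}$; both are standard and equivalent.
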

\begin{proof}
Theorem \ref{sparsity_original} ensures that $\mathbf{u}^{*}\in\mathbb{R}^n$, where for each $j\in \mathbb{N}_d,$ $\mathbf{B}_{j}\mathbf{u}^{*}=\sum_{i\in\mathbb{N}_{l_j}}z_{p_{j-1}+k_i}^*\mathbf{e}_{m_j,k_i}\in \Omega_{m_j,l_j}$ is a solution of problem \eqref{optimization_problem_under_Bj} if and only if there exist  $\mathbf{a}\in\partial\bm{\psi}(\mathbf{u}^*)$ and $\mathbf{b}\in\mathcal{N}(\mathbf{B}^{\top})$ such that \eqref{sparsity-equi-mini-1}, \eqref{sparsity-equi-mini-2} and \eqref{sparsity-equi-mini-3} hold. By the assumption that $\mathrm{rank}(\mathbf{B})=p_d$, we rewrite equation \eqref{sparsity-equi-mini-1} as equation \eqref{sparsity-equi-mini-1-rank}. It follows from $\mathrm{rank}(\mathbf{B})=p_d$ that $\mathcal{N}(\mathbf{B}^\top)=\left(\mathcal{R}(\mathbf{B})\right)^\bot=\{\mathbf{0}\}$. Then vector $\mathbf{b}$ in \eqref{sparsity-equi-mini-2} and \eqref{sparsity-equi-mini-3} is the zero vector. Thus, \eqref{sparsity-equi-mini-2-rank} and \eqref{sparsity-equi-mini-3-rank} can be obtained directly.
\end{proof}

Note that if $\bm{\psi}$ is differentiable, then the subdifferential of  $\bm{\psi}$ at $\mathbf{u}^*$
is the singleton $\nabla\bm{\psi}(\mathbf{u}^*)$. In this case, Theorem \ref{sparsity_original} and Corollary \ref{sparsity-rank} have the following simple form.

\begin{corollary}\label{sparsity-diff-rank}
Suppose that $\bm{\psi}:\mathbb{R}^n\to\mathbb{R}_+$ is a differentiable and convex function and for each $j\in\mathbb{N}_d$, $\mathbf{B}_{j}$ is an $m_j\times n$ matrix. Let $\mathbf{B}$ be defined as in \eqref{block-matrix-B}. Then problem \eqref{optimization_problem_under_Bj} with $\lambda_j>0, j\in \mathbb{N}_d,$ has a solution $\mathbf{u}^{*}\in\mathbb{R}^n$, where for each $j\in \mathbb{N}_d,$ $\mathbf{B}_{j}\mathbf{u}^{*}=\sum_{i\in\mathbb{N}_{l_j}}z_{p_{j-1}+k_i}^*\mathbf{e}_{m_j,k_i}\in \Omega_{m_j,l_j},$ for some $l_{j}\in\mathbb{Z}_{m_j+1}$ if and only if there exist $\mathbf{b}:=[b_j:j\in\mathbb{N}_{p_d}]\in\mathcal{N}(\mathbf{B}^{\top})$ such that     \begin{equation*}\label{sparsity-equi-mini-1-diff}
(\mathbf{B}'_{(i)})^{\top}\nabla\bm{\psi}(\mathbf{u}^*)={0}, \ i\in\mathbb{N}_{p_d+n-r}\setminus\mathbb{N}_{p_d}, 
\end{equation*}
and for each $j\in\mathbb{N}_d$
\begin{equation*}\label{sparsity-equi-mini-2-diff}
\lambda_j=-\left( (\mathbf{B}_{(p_{j-1}+k_i)}')^\top\nabla\bm{\psi}(\mathbf{u}^*)+b_{p_{j-1}+k_i}\right)\mathrm{sign}(z_{p_{j-1}+k_i}^*),\  i\in \mathbb{N}_{l_j},
\end{equation*}
\begin{equation*}\label{sparsity-equi-mini-3-diff}
\lambda_j\geq\left|(\mathbf{B}_{(p_{j-1}+i)}')^\top\nabla\bm{\psi}(\mathbf{u}^*)+b_{p_{j-1}+i}\right|, \ i\in \mathbb{N}_{m_j}\setminus\{k_i:i\in \mathbb{N}_{l_j}\}.
\end{equation*}
In particular, if $\mathrm{rank}(\mathbf{B})=p_d$, then the conditions reduce to
\begin{equation*}\label{sparsity-equi-mini-1-diff-rank}
(\mathbf{B}'_{(i)})^{\top}\nabla\bm{\psi}(\mathbf{u}^*)={0}, \ i\in\mathbb{N}_{n}\setminus\mathbb{N}_{p_d}, 
\end{equation*}
and for each $j\in\mathbb{N}_d$
\begin{equation*}\label{sparsity-equi-mini-2-diff-rank}
\lambda_j=-(\mathbf{B}_{(p_{j-1}+k_i)}')^\top\nabla\bm{\psi}(\mathbf{u}^*)\mathrm{sign}(z_{p_{j-1}+k_i}^*),\  k_i\in \mathbb{N}_{l_j},
\end{equation*}
\begin{equation*}\label{sparsity-equi-mini-3-diff-rank}
\lambda_j\geq\left|(\mathbf{B}_{(p_{j-1}+i)}')^\top\nabla\bm{\psi}(\mathbf{u}^*)\right|, \ i\in \mathbb{N}_{m_j}\setminus\{k_i:i\in \mathbb{N}_{l_j}\}.
\end{equation*}
\end{corollary}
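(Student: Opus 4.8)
The plan is to obtain this corollary as a direct specialization of Theorem~\ref{sparsity_original} and Corollary~\ref{sparsity-rank}, using the single structural fact that distinguishes the differentiable setting: when $\bm{\psi}$ is convex and differentiable at $\mathbf{u}^*$, its subdifferential there collapses to the singleton $\partial\bm{\psi}(\mathbf{u}^*)=\{\nabla\bm{\psi}(\mathbf{u}^*)\}$. This is a classical result in convex analysis. Consequently, the existential clause ``there exist $\mathbf{a}\in\partial\bm{\psi}(\mathbf{u}^*)$'' appearing in the parent results is forced to the unique choice $\mathbf{a}=\nabla\bm{\psi}(\mathbf{u}^*)$, and the characterizations become conditions on the gradient alone.

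For the general (possibly rank-deficient) statement, I would apply Theorem~\ref{sparsity_original} verbatim and then substitute $\mathbf{a}=\nabla\bm{\psi}(\mathbf{u}^*)$ into conditions \eqref{sparsity-equi-mini-1}, \eqref{sparsity-equi-mini-2}, and \eqref{sparsity-equi-mini-3}. Because $\mathbf{a}$ is now uniquely pinned down, the only remaining free vector is $\mathbf{b}\in\mathcal{N}(\mathbf{B}^{\top})$, and the three conditions reduce exactly to the first displayed system of the corollary. For the full row rank case, I would instead invoke Corollary~\ref{sparsity-rank}, which has already eliminated $\mathbf{b}$ via $\mathcal{N}(\mathbf{B}^{\top})=(\mathcal{R}(\mathbf{B}))^{\bot}=\{\mathbf{0}\}$ under $\mathrm{rank}(\mathbf{B})=p_d$. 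Applying the same substitution $\mathbf{a}=\nabla\bm{\psi}(\mathbf{u}^*)$ to \eqref{sparsity-equi-mini-1-rank}, \eqref{sparsity-equi-mini-2-rank}, and \eqref{sparsity-equi-mini-3-rank} then yields precisely the reduced system stated in the ``in particular'' clause.

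There is no genuine analytic obstacle here: the entire content of the proof is the observation that differentiability turns the subdifferential into a singleton, after which both halves follow by literal substitution into results already proved. The only point requiring care is routine bookkeeping—transcribing the indices and the signs of the gradient terms correctly from the two parent statements, and routing the general case through Theorem~\ref{sparsity_original} while routing the full row rank case through Corollary~\ref{sparsity-rank} so that the vanishing of $\mathbf{b}$ is justified by the null-space computation rather than assumed.
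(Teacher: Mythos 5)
Your proposal is correct and follows exactly the route the paper intends: the paper itself prefaces the corollary with the remark that differentiability collapses the subdifferential to the singleton $\{\nabla\bm{\psi}(\mathbf{u}^*)\}$ and then presents the result as an immediate specialization of Theorem~\ref{sparsity_original} (general case) and Corollary~\ref{sparsity-rank} (full row rank case), omitting a written proof. Your substitution argument is precisely that implicit proof, with the correct routing of the two cases through their respective parent statements.
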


In the rest of this section, we consider the special case where $p_d=n$ and for each $j\in \mathbb{N}_d,$ the transform matrix $\mathbf{B}_{j}$ takes the form
\begin{equation}\label{B-j-special}\mathbf{B}_{j}:=[\mathbf{0}_{m_j\times p_{j-1}}\  \mathbf{I}_{m_j}\ \mathbf{0}_{m_j\times (n-p_j)}]\in\mathbb{R}^{m_j\times n}.
\end{equation}
In this scenario, the multi-parameter regularization problem \eqref{optimization_problem_under_Bj} assumes the special form
\begin{equation}\label{optimization_problem}
\min
\left\{\bm{\psi}(\mathbf{u})+\sum_{j\in\mathbb{N}_d}\lambda_j\|\mathbf{u}_j\|_{1}:\mathbf{u}\in\mathbb{R}^n\right\}.
\end{equation}
We specialize the previously derived characterizations of the sparsity of regularized solutions to this special case. Moreover, particular attention is given to scenarios where the fidelity term $\bm{\psi}$ is block-separable. 

We first characterize the sparsity of the solution to problem \eqref{optimization_problem}. It follows from equation \eqref{B-j-special} that matrix $\mathbf{B}$ defined by \eqref{block-matrix-B} coincides with the identity matrix of order $n$. That is, $\mathbf{B}$ has full row rank. As a result, we may specialize Corollary \ref{sparsity-rank} to the regularization problem \eqref{optimization_problem}. In addition, the transform matrices $\mathbf{B}_{j}$, $j\in\mathbb{N}_d$, with the form \eqref{B-j-special} enable us to consider the sparsity of each sub-vector of the regularized solution separately. 

\begin{theorem}\label{sparsity-special-case}
Suppose that $\bm{\psi}:\mathbb{R}^n\to\mathbb{R}_+$ is a convex function. Then problem \eqref{optimization_problem}
with $\lambda_j>0$, $j\in \mathbb{N}_d$, has a solution $\mathbf{u}^{*}$, where for each $j\in \mathbb{N}_d$, $\mathbf{u}_j^{*}=\sum_{i\in\mathbb{N}_{l_j}}u^*_{p_{j-1}+k_i}\mathbf{e}_{m_j,k_i}\in \Omega_{m_j,l_j}$, for some $l_{j}\in\mathbb{Z}_{m_j+1}$ if and only if there exists $\mathbf{a}:=[a_j:j\in\mathbb{N}_{n}]\in\partial\bm{\psi}(\mathbf{u}^{*})$ such that for each $j\in \mathbb{N}_d$
\begin{equation}\label{sparsity-equi-mini-1-special}
\lambda_{j}=-a_{p_{j-1}+k_i}\mathrm{sign}(u^*_{p_{j-1}+k_i}), \
i\in\mathbb{N}_{l_j},
\end{equation}
\begin{equation}\label{sparsity-equi-mini-2-special}
\lambda_j\geq|a_{p_{j-1}+i}|,\ i\in \mathbb{N}_{m_j}\setminus\{k_i:i\in \mathbb{N}_{l_j}\}.
\end{equation}
In particular, if $\bm{\psi}$ is differentiable, then the conditions reduce to for each $j\in \mathbb{N}_d$
\begin{equation}\label{sparsity-equi-mini-1-special-diff}
\lambda_{j}=-\frac{\partial{\bm{\psi}}}{\partial{u_{p_{j-1}+k_i}}}(\mathbf{u}^{*})\mathrm{sign}(u^*_{p_{j-1}+k_i}), \
i\in\mathbb{N}_{l_j},
\end{equation}
\begin{equation}\label{sparsity-equi-mini-2-special-diff}
\lambda_j\geq\left|\frac{\partial{\bm{\psi}}}{\partial{u_{p_{j-1}+i}}}(\mathbf{u}^{*})\right|, \ i\in \mathbb{N}_{m_j}\setminus\{k_i:i\in \mathbb{N}_{l_j}\}.
\end{equation}
\end{theorem}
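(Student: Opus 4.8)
The plan is to derive Theorem \ref{sparsity-special-case} as a direct specialization of Corollary \ref{sparsity-rank}, so the bulk of the work is to verify that, under the structural assumption \eqref{B-j-special}, all the SVD-based quantities appearing in that corollary collapse to trivial forms. First I would observe that stacking the matrices $\mathbf{B}_j$ of the form \eqref{B-j-special} according to \eqref{block-matrix-B} yields $\mathbf{B}=\mathbf{I}_n$: each $\mathbf{B}_j$ selects the consecutive block of columns indexed by $p_{j-1}+1,\ldots,p_j$, and since $p_d=n$ these blocks tile all $n$ coordinates without overlap. In particular $\mathbf{B}$ has full row rank with $r=p_d=n$, so Corollary \ref{sparsity-rank} applies and the regularization problem \eqref{optimization_problem} is precisely the instance of \eqref{optimization_problem_under_Bj} under consideration.

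Next I would compute the reconstruction matrix $\mathbf{B}'$ for this case. Taking the SVD of $\mathbf{B}=\mathbf{I}_n$ with $\mathbf{U}=\mathbf{V}=\mathbf{I}_n$ and all singular values equal to $1$, the definitions of $\mathbf{U}'$, $\mathbf{\Lambda}'$ and $\mathbf{B}'=\mathbf{V}\mathbf{\Lambda}'\mathbf{U}'$ give $\mathbf{B}'=\mathbf{I}_n$; hence $\mathbf{B}'_{(i)}=\mathbf{e}_{n,i}$ and $(\mathbf{B}'_{(i)})^{\top}\mathbf{a}=a_i$ for every $i\in\mathbb{N}_n$. Since $r=n$, we have $p_d+n-r=n$, so the index set $\mathbb{N}_n\setminus\mathbb{N}_{p_d}$ in \eqref{sparsity-equi-mini-1-rank} is empty and that condition is vacuously satisfied; only \eqref{sparsity-equi-mini-2-rank} and \eqref{sparsity-equi-mini-3-rank} survive. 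Because $\mathbf{B}=\mathbf{I}_n$, the change of variables gives $\mathbf{z}^*=\mathbf{B}\mathbf{u}^*=\mathbf{u}^*$ and $\mathbf{B}_j\mathbf{u}^*=\mathbf{u}_j^*$, so $z^*_{p_{j-1}+k_i}=u^*_{p_{j-1}+k_i}$. Substituting $(\mathbf{B}'_{(i)})^{\top}\mathbf{a}=a_i$ into \eqref{sparsity-equi-mini-2-rank} and \eqref{sparsity-equi-mini-3-rank} then produces exactly \eqref{sparsity-equi-mini-1-special} and \eqref{sparsity-equi-mini-2-special}.

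For the differentiable specialization, I would invoke the fact already recorded in the text that when $\bm{\psi}$ is differentiable its subdifferential at $\mathbf{u}^*$ is the singleton $\{\nabla\bm{\psi}(\mathbf{u}^*)\}$, so the vector $\mathbf{a}$ must equal $\nabla\bm{\psi}(\mathbf{u}^*)$ and $a_i=\partial\bm{\psi}/\partial u_i(\mathbf{u}^*)$; inserting this into \eqref{sparsity-equi-mini-1-special} and \eqref{sparsity-equi-mini-2-special} yields \eqref{sparsity-equi-mini-1-special-diff} and \eqref{sparsity-equi-mini-2-special-diff}. The only point requiring genuine care, the ``hard part'' such as it is, is the explicit evaluation of $\mathbf{B}'$: one must check that the degenerate components, namely the $\mathbf{v}$-block of dimension $n-r=0$ and the factor $\mathbf{I}_{n-r}$ in $\mathbf{U}'$, vanish cleanly so that $\mathbf{B}'$ reduces to $\mathbf{I}_n$ and the orthogonality conditions \eqref{sparsity-equi-mini-1-rank} drop out entirely. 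Everything else is routine substitution.
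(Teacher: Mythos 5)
Your proposal is correct and follows essentially the same route as the paper: both specialize Corollary \ref{sparsity-rank} by noting that $\mathbf{B}=\mathbf{I}_n$ has full row rank, that the index set in \eqref{sparsity-equi-mini-1-rank} is empty since $p_d=n$, that $\mathbf{B}'$ reduces to the identity, and that the differentiable case follows from the subdifferential being the singleton $\{\nabla\bm{\psi}(\mathbf{u}^*)\}$. Your explicit SVD verification that $\mathbf{B}'=\mathbf{I}_n$ is slightly more detailed than the paper's "it is clear" but amounts to the same argument.
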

\begin{proof}
Since matrix $\mathbf{B}$ defined by equation \eqref{block-matrix-B} has full row rank, Corollary \ref{sparsity-rank} ensures that problem \eqref{optimization_problem}
with $\lambda_j>0$, $j\in \mathbb{N}_d$, has a solution $\mathbf{u}^{*}$, where for each $j\in \mathbb{N}_d$, $\mathbf{u}_j^{*}=\sum_{i\in\mathbb{N}_{l_j}}u^*_{p_{j-1}+k_i}\mathbf{e}_{m_j,k_i}\in \Omega_{m_j,l_j}$, for some $l_{j}\in\mathbb{Z}_{m_j+1}$ if and only if there exists $\mathbf{a}:=[a_j:j\in\mathbb{N}_{n}]\in\partial\bm{\psi}(\mathbf{u}^{*})$ such that conditions \eqref{sparsity-equi-mini-1-rank}, \eqref{sparsity-equi-mini-2-rank} and \eqref{sparsity-equi-mini-3-rank} hold. Note that index $i$ such that equation \eqref{sparsity-equi-mini-1-rank} holds belongs to an empty set since $p_d=n$. It is clear that matrix $\mathbf{B}'$ appearing in conditions \eqref{sparsity-equi-mini-2-rank} and \eqref{sparsity-equi-mini-3-rank} is also an identity matrix. Hence, conditions \eqref{sparsity-equi-mini-2-rank} and \eqref{sparsity-equi-mini-3-rank} reduce to \eqref{sparsity-equi-mini-1-special} and \eqref{sparsity-equi-mini-2-special}, respectively. If $\bm{\psi}$ is differentiable,
then the subdifferential of $\bm{\psi}$ at $\mathbf{u}^{*}$ is the singleton  $\nabla\bm{\psi}(\mathbf{u}^{*})$. Substituting $a_j=\frac{\partial{\bm{\psi}}}{\partial{u_{j}}}(\mathbf{u}^{*})$ into \eqref{sparsity-equi-mini-1-special} and \eqref{sparsity-equi-mini-2-special} lead directly to \eqref{sparsity-equi-mini-1-special-diff} and \eqref{sparsity-equi-mini-2-special-diff}, respectively.
\end{proof}

As a specific example, we consider the regularization problem
\begin{equation}\label{lasso_multi-parameter}
\min\left\{\frac{1}{2}\left\|\mathbf{A}\mathbf{u}-\mathbf{x}\right\|_2^2
+\sum_{j\in\mathbb{N}_d}\lambda_j\|\mathbf{u}_j\|_1:\mathbf{u}\in\mathbb{R}^{n}\right\}.
\end{equation}
In this model, the fidelity term \begin{equation}\label{fidelity_term}
\bm{\psi}(\mathbf{u}):=\frac{1}{2}\|\mathbf{A}\mathbf{u}-\mathbf{x}\|_2^2, \ \mathbf{u}\in\mathbb{R}^n,
\end{equation}
is convex and differentiable. As a result, we apply Theorem \ref{sparsity-special-case} to this model.

\begin{corollary}\label{sparsity-special-case-example}
Suppose that $\mathbf{x}\in\mathbb{R}^t$ and $\mathbf{A}\in\mathbb{R}^{t\times n}$ are given. Then the regularization problem \eqref{lasso_multi-parameter} with $\lambda_j>0,j\in\mathbb{N}_d$, has a solution $\mathbf{u}^{*}$, where for each $j\in\mathbb{N}_d$, $\mathbf{u}_j^{*}=\sum_{i\in\mathbb{N}_{l_j}}u_{p_{j-1}+k_i}^*\mathbf{e}_{m_j,k_i}\in \Omega_{m_j,l_j}$, for some $l_{j}\in\mathbb{Z}_{m_j+1}$ if and only if for each $j\in\mathbb{N}_d$ 
\begin{equation}\label{sparsity-equi-mini-1-special-diff-example}
\lambda_{j}=(\mathbf{A}_{(p_{j-1}+k_i)})^{\top}(\mathbf{x}-\mathbf{Au^*})\mathrm{sign}(u^{*}_{p_{j-1}+k_i}),\ i\in \mathbb{N}_{l_j}
\end{equation}
\begin{equation}\label{sparsity-equi-mini-2-special-diff-example}
\lambda_j\geq\big|(\mathbf{A}_{(p_{j-1}+i)})^{\top}(\mathbf{Au^*}-\mathbf{x})\big|,\  i\in \mathbb{N}_{m_j}\setminus\{k_i:i\in \mathbb{N}_{l_j}\}.
\end{equation}
\end{corollary}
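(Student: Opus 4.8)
The plan is to obtain this corollary as an immediate specialization of Theorem \ref{sparsity-special-case}, since the problem \eqref{lasso_multi-parameter} is exactly an instance of \eqref{optimization_problem} with the fidelity term $\bm{\psi}$ given by \eqref{fidelity_term}. The regularizers $\sum_{j\in\mathbb{N}_d}\lambda_j\|\mathbf{u}_j\|_1$ are precisely the block-wise $\ell_1$-penalties appearing in \eqref{optimization_problem}, so $p_d=n$ and the transform matrices have the form \eqref{B-j-special}. Because the quadratic fidelity term $\bm{\psi}(\mathbf{u})=\frac{1}{2}\|\mathbf{A}\mathbf{u}-\mathbf{x}\|_2^2$ is convex and differentiable, I would invoke the differentiable branch of Theorem \ref{sparsity-special-case}, namely the characterization given by \eqref{sparsity-equi-mini-1-special-diff} and \eqref{sparsity-equi-mini-2-special-diff}.

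The single computational step is to evaluate the gradient of $\bm{\psi}$. A direct differentiation gives $\nabla\bm{\psi}(\mathbf{u}^*)=\mathbf{A}^{\top}(\mathbf{A}\mathbf{u}^*-\mathbf{x})$, so that for each index $p_{j-1}+i$ the corresponding partial derivative is the scalar $\frac{\partial\bm{\psi}}{\partial u_{p_{j-1}+i}}(\mathbf{u}^*)=(\mathbf{A}_{(p_{j-1}+i)})^{\top}(\mathbf{A}\mathbf{u}^*-\mathbf{x})$, where $\mathbf{A}_{(p_{j-1}+i)}$ denotes the corresponding column of $\mathbf{A}$. Substituting this expression into \eqref{sparsity-equi-mini-1-special-diff} yields $\lambda_j=-(\mathbf{A}_{(p_{j-1}+k_i)})^{\top}(\mathbf{A}\mathbf{u}^*-\mathbf{x})\,\mathrm{sign}(u^*_{p_{j-1}+k_i})=(\mathbf{A}_{(p_{j-1}+k_i)})^{\top}(\mathbf{x}-\mathbf{A}\mathbf{u}^*)\,\mathrm{sign}(u^*_{p_{j-1}+k_i})$, which is exactly \eqref{sparsity-equi-mini-1-special-diff-example}. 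Likewise, substituting into \eqref{sparsity-equi-mini-2-special-diff} and using that the absolute value is insensitive to the overall sign reproduces \eqref{sparsity-equi-mini-2-special-diff-example}.

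There is no genuine obstacle here; the argument is a routine substitution once Theorem \ref{sparsity-special-case} is in place. The only points requiring care are bookkeeping items: correctly identifying the $i$-th entry of $\mathbf{A}^{\top}\mathbf{w}$ with $(\mathbf{A}_{(i)})^{\top}\mathbf{w}$ under the column-indexing convention $\mathbf{M}_{(i)}$, and tracking the sign flip between $\mathbf{A}\mathbf{u}^*-\mathbf{x}$ and $\mathbf{x}-\mathbf{A}\mathbf{u}^*$ so that the equality \eqref{sparsity-equi-mini-1-special-diff-example} is stated with the intended orientation. I would present the proof by first recording the gradient formula, then stating that problem \eqref{lasso_multi-parameter} meets the hypotheses of Theorem \ref{sparsity-special-case} in its differentiable form, and finally performing the two substitutions to arrive at the claimed conditions.
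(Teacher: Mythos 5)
Your proposal is correct and follows the paper's own proof essentially verbatim: both invoke the differentiable branch of Theorem \ref{sparsity-special-case}, compute $\nabla\bm{\psi}(\mathbf{u}^*)=\mathbf{A}^{\top}(\mathbf{A}\mathbf{u}^*-\mathbf{x})$, and substitute the partial derivatives $(\mathbf{A}_{(p_{j-1}+i)})^{\top}(\mathbf{A}\mathbf{u}^*-\mathbf{x})$ into \eqref{sparsity-equi-mini-1-special-diff} and \eqref{sparsity-equi-mini-2-special-diff}, with the sign flip producing the stated equality. No gaps.
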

\begin{proof}
Since the fidelity term $\bm{\psi}(\mathbf{u})$ defined by \eqref{fidelity_term} is convex and differentiable, Theorem \ref{sparsity-special-case} confirms 
that problem \eqref{lasso_multi-parameter} with $\lambda_j>0$, $j\in \mathbb{N}_d$, has a solution $\mathbf{u}^{*}$, where for each $j\in \mathbb{N}_d$, $\mathbf{u}_j^{*}=\sum_{i\in\mathbb{N}_{l_j}}u^*_{p_{j-1}+k_i}\mathbf{e}_{m_j,k_i}\in \Omega_{m_j,l_j}$, for some $l_{j}\in\mathbb{Z}_{m_j+1}$ if and only if \eqref{sparsity-equi-mini-1-special-diff} and \eqref{sparsity-equi-mini-2-special-diff} hold. 
Note that the gradient of $\bm{\psi}$ at $\mathbf{u}^*$ has the form $\nabla\bm{\psi}(\mathbf{u}^*)=\mathbf{A}^\top(\mathbf{A}\mathbf{u}^*-\mathbf{x})$. As a result, there holds for each $j\in\mathbb{N}_n$ and each $i\in\mathbb{N}_{m_j}$ 
$$
\frac{\partial{\bm{\psi}}}{\partial{u_{p_{j-1}+i}}}(\mathbf{u}^{*})=(\mathbf{A}_{(p_{j-1}+i)})^{\top}(\mathbf{Au^*}-\mathbf{x}).
$$
According to the above representations of the partial derivatives of $\bm{\psi}$, conditions \eqref{sparsity-equi-mini-1-special-diff} and \eqref{sparsity-equi-mini-2-special-diff} reduce to \eqref{sparsity-equi-mini-1-special-diff-example} and \eqref{sparsity-equi-mini-2-special-diff-example}, respectively.
  
\end{proof}

We next study the case that the fidelity term $\bm{\psi}$ involved in problem \eqref{optimization_problem} has special structure, that is, $\bm{\psi}$ is block separable. To describe the block separability of a function on $\mathbb{R}^s$, we introduce a partition of the index set $\mathbb{N}_s$. Let $q\in\mathbb{N}$ with $q\leq s$. We suppose that $\mathcal{S}_{s,q}:=\left\{S_{s,1},S_{s,2},\ldots, S_{s,q}\right\}$ is a partition of $\mathbb{N}_s$ in the sense that $S_{s,k}\neq\emptyset$, for all $k\in\mathbb{N}_q$, $S_{s,k}\cap S_{s,l}=\emptyset$ if $k\neq l$, and $\cup_{k\in\mathbb{N}_q}S_{s,k}=\mathbb{N}_s$. For each $k\in \mathbb{N}_q$ we denote by $s_k$ the cardinality of $S_{s,k}$ and regard $S_{s,k}$ as an {\it ordered set} in the natural order of the elements in $\mathbb{N}_s$. That is, 
$$
S_{s,k}:=\{i_{k,1}, \dots, i_{k,s_k}\}, \ \mbox{with}\  i_{k,l}\in \mathbb{N}_s, \ l\in \mathbb{N}_{s_k}\ \mbox{and}\ i_{k,1}<\dots<i_{k,s_k}.
$$
Associated with partition $\mathcal{S}_{s,q}$, we decompose $\mathbf{w}:=[w_k:k\in\mathbb{N}_s]\in\mathbb{R}^s$
into $q$ sub-vectors by setting 
$$
\mathbf{w}_{S_{s,k}}:=[w_{i_{k,1}}, \dots, w_{{i}_{k,s_k}}]\in\mathbb{R}^{s_k},\ k\in\mathbb{N}_q.
$$
A function  $\bm{\phi}:\mathbb{R}^s\rightarrow\mathbb{R}$ is called $\mathcal{S}_{s,q}$-block separable if there exist functions $\bm{\phi}_k:\mathbb{R}^{s_k}\rightarrow\mathbb{R}$, $k\in\mathbb{N}_q$ such that
$$
    \bm{\phi}(\mathbf{w})=\sum\limits_{k\in\mathbb{N}_q}\bm{\phi}_{k}(\mathbf{w}_{S_{s,k}}),\ \mathbf{w}\in\mathbb{R}^s.
$$

We now describe the block separablity of the fidelity term $\bm{\psi}$. Recall that $p_d=n$. If the partition  $\mathcal{S}_{n,d}:=\left\{S_{n,1},S_{n,2},\ldots, S_{n,d}\right\}$ for $\mathbb{N}_{n}$ is chosen with $S_{n,j}:=\{p_{j-1}+i:i\in\mathbb{N}_{m_j}\},\  j\in\mathbb{N}_d,$ then for each $j\in\mathbb{N}_d$ the sub-vector $\mathbf{u}_{S_{n,j}}$ of $\mathbf{u}\in\mathbb{R}^{n}$ coincides with $\mathbf{u}_j$. It is clear that the regularization term in problem \eqref{optimization_problem} is $\mathcal{S}_{n,d}$-block separable. We also assume that $\bm{\psi}$ is $\mathcal{S}_{n,d}$-block separable, that is, there exist functions $\bm{\psi}_{j}:\mathbb{R}^{m_j}\rightarrow\mathbb{R}_+$, $j\in\mathbb{N}_d$ such that 
\begin{equation}\label{block_separable_psi}
\bm{\psi}(\mathbf{u})=\sum\limits_{j\in\mathbb{N}_d}\bm{\psi}_{j}(\mathbf{u}_{j}),\ \mathbf{u}\in\mathbb{R}^n.
\end{equation} 
Combining the block separability of the fidelity term $\bm{\psi}$ and the norm function $\|\cdot\|_1$, the multi-parameter regularization problem \eqref{optimization_problem} can be
reduced to the following lower dimensional single-parameter regularization problems
\begin{equation}\label{optimization_problem-single-parameter}
\min
\left\{\bm{\psi}_{j}(\mathbf{u}_{j})+\lambda_j\|\mathbf{u}_{j}\|_{1}:\mathbf{u}_{j}\in\mathbb{R}^{m_{j}}\right\}, \ j\in\mathbb{N}_d.
\end{equation}
Note that the sparsity of the solution of each single parameter regularization problem \eqref{optimization_problem-single-parameter} was characterized in \cite{Liu2023parameter}. This characterization can also be derived from Theorem \ref{sparsity-special-case}.  
We further assume that functions $\bm{\psi}_{j}$, $j\in\mathbb{N}_d$ has block separability. For each $j\in\mathbb{N}_d$, let $\mathcal{S}_{m_j,q_j}:=\left\{S_{m_j,1},S_{m_j,2},\ldots, S_{m_j,q_j}\right\}$ be a partition of $\mathbb{N}_{m_j}$ and for each $k\in\mathbb{N}_{q_j}$, $s_{j,k}$ be the cardinality of $S_{m_j,k}$. For each $\mathbf{u}\in\mathbb{R}^n$, we set $\mathbf{u}_{j,k}:=(\mathbf{u}_j)_{S_{m_j,k}}$ for all $j\in\mathbb{N}_d$ and $k\in\mathbb{N}_{q_j}$. Suppose that for each $j\in\mathbb{N}_d$, $\bm{\psi}_j$ has the form
\begin{equation}\label{block_separable_psi-j}
 \bm{\psi}_j(\mathbf{u}_j)=\sum\limits_{k\in\mathbb{N}_{q_j}}\bm{\psi}_{j,k}(\mathbf{u}_{j,k}),\ \mathbf{u}_j\in\mathbb{R}^{m_j},
\end{equation} 
with $\bm{\psi}_{j,k}$ being functions from $\mathbb{R}^{s_{j,k}}$ to $\mathbb{R}_+$, $k\in\mathbb{N}_{q_j}$.  

We are ready to characterize the block sparsity of each sub-vector of the solution of problem \eqref{optimization_problem} when $\bm{\psi}$ has block separability described above. Here, we say that a vector $\mathbf{x}\in\mathbb{R}^s$ has $\mathcal{S}_{s,q}$-block
sparsity of level $l\in\mathbb{Z}_{q+1}$ if $\mathbf{x}$ has exactly $l$ number of nonzero sub-vectors with respect to partition $\mathcal{S}_{s,q}$. 
\begin{theorem}\label{sparsity-special-case-block}
Suppose that for each $j\in\mathbb{N}_d$ and each $k\in\mathbb{N}_{q_j}$, $\bm{\psi}_{j,k}:\mathbb{R}^{s_{j,k}}\to\mathbb{R}_+$ is a convex function and $\bm{\psi}_{j}$ is an $\mathcal{S}_{m_j,q_j}$-block separable function having the form \eqref{block_separable_psi-j}. Let $\bm{\psi}$ be the function with the form \eqref{block_separable_psi}. Then problem \eqref{optimization_problem}
with $\lambda_j>0$, $j\in \mathbb{N}_d$, has a solution $\mathbf{u}^{*}$, where for each $j\in \mathbb{N}_d$, $\mathbf{u}_j^{*}$ having the $\mathcal{S}_{m_j,q_j}$-block sparsity of level $l'_j\leq l_j$ for some $l_j\in\mathbb{Z}_{q_j+1}$ if and only if for each $j\in \mathbb{N}_d$ there exist distinct
$k_{j,i}\in\mathbb{N}_{q_j}$, $i\in\mathbb{N}_{l_j}$, such that
\begin{equation}\label{sparsity-equi-mini-special-block}
\lambda_j\geq\mathrm{min}\left\{\|\mathbf{y}\|_{\infty}:\mathbf{y}\in\partial\bm{\psi}_{j,k}(\mathbf{0}) \right\}, \ \mbox{for all}\ k\in\mathbb{N}_{q_j}\setminus{\{k_{j,i}:i\in\mathbb{N}_{l_j}\}}.
\end{equation}
In particular, if $\bm{\psi}_{j,k}$, $j\in\mathbb{N}_d$, $k\in\mathbb{N}_{q_j}$ are differentiable, then
condition \eqref{sparsity-equi-mini-special-block} reduces to
\begin{equation}\label{sparsity-equi-mini-special-block-diff}
    \lambda_j\geq\|\nabla\bm{\psi}_{j,k}(\mathbf{0})\|_\infty,\ \ \mbox{for all}\ \ k\in\mathbb{N}_{q_j}\setminus{\{k_{j,i}:i\in\mathbb{N}_{l_j}\}}.
\end{equation}
\end{theorem}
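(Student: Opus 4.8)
The plan is to exploit the complete separability of the objective in \eqref{optimization_problem} induced by \eqref{block_separable_psi} and \eqref{block_separable_psi-j}, reducing the block-sparsity characterization to an elementary zero-solution test applied independently to each block. First I would record the full decoupling: since $\bm{\psi}(\mathbf{u})=\sum_{j\in\mathbb{N}_d}\sum_{k\in\mathbb{N}_{q_j}}\bm{\psi}_{j,k}(\mathbf{u}_{j,k})$ and $\sum_{j\in\mathbb{N}_d}\lambda_j\|\mathbf{u}_j\|_1=\sum_{j\in\mathbb{N}_d}\lambda_j\sum_{k\in\mathbb{N}_{q_j}}\|\mathbf{u}_{j,k}\|_1$, the objective of \eqref{optimization_problem} is a separable sum over the refined index pairs $(j,k)$. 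Consequently $\mathbf{u}^*$ solves \eqref{optimization_problem} if and only if, for every $j\in\mathbb{N}_d$ and $k\in\mathbb{N}_{q_j}$, the block $\mathbf{u}_{j,k}^*$ minimizes $\bm{\psi}_{j,k}(\cdot)+\lambda_j\|\cdot\|_1$ over $\mathbb{R}^{s_{j,k}}$. Each such subproblem attains a minimizer, because $\bm{\psi}_{j,k}\geq 0$ forces the $\ell_1$ term to render the objective coercive, while finiteness and convexity of $\bm{\psi}_{j,k}$ make it continuous, and a coercive continuous convex function on $\mathbb{R}^{s_{j,k}}$ attains its infimum.

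Next I would establish the zero-block test. By the Fermat rule and the subdifferential sum rule (applicable since $\|\cdot\|_1$ is continuous), $\mathbf{0}$ minimizes $\bm{\psi}_{j,k}(\cdot)+\lambda_j\|\cdot\|_1$ if and only if $\mathbf{0}\in\partial\bm{\psi}_{j,k}(\mathbf{0})+\lambda_j\,\partial\|\cdot\|_1(\mathbf{0})$. Since $\partial\|\cdot\|_1(\mathbf{0})=\{\mathbf{x}:\|\mathbf{x}\|_\infty\leq 1\}$, this membership holds precisely when some $\mathbf{y}\in\partial\bm{\psi}_{j,k}(\mathbf{0})$ satisfies $\|\mathbf{y}\|_\infty\leq\lambda_j$, that is, when $\lambda_j\geq\min\{\|\mathbf{y}\|_\infty:\mathbf{y}\in\partial\bm{\psi}_{j,k}(\mathbf{0})\}$. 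The minimum here is attained because $\partial\bm{\psi}_{j,k}(\mathbf{0})$ is nonempty, compact, and convex and $\|\cdot\|_\infty$ is continuous.

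Finally I would translate these facts into the block-sparsity statement. A solution $\mathbf{u}^*$ has $\mathbf{u}_{j,k}^*=\mathbf{0}$ exactly when $\mathbf{0}$ is selected as the minimizer of the $(j,k)$-subproblem, which is possible if and only if the zero-block test holds for $(j,k)$; hence the set of block indices $k$ at which a solution can vanish is precisely the set of indices satisfying the test. Thus a solution whose component $\mathbf{u}_j^*$ has block sparsity level $l'_j\leq l_j$ exists if and only if at least $q_j-l_j$ indices $k\in\mathbb{N}_{q_j}$ satisfy the test, which is exactly condition \eqref{sparsity-equi-mini-special-block}: the complementary $l_j$ indices form the set $\{k_{j,i}:i\in\mathbb{N}_{l_j}\}$, while every remaining index satisfies $\lambda_j\geq\min\{\|\mathbf{y}\|_\infty:\mathbf{y}\in\partial\bm{\psi}_{j,k}(\mathbf{0})\}$. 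For the forward implication, given a solution with exactly $l'_j\leq l_j$ nonzero blocks, one enlarges the set of nonzero-block indices to cardinality $l_j$ by adjoining arbitrary distinct indices, so that \eqref{sparsity-equi-mini-special-block} still holds on the complement. The differentiable case is immediate: $\partial\bm{\psi}_{j,k}(\mathbf{0})=\{\nabla\bm{\psi}_{j,k}(\mathbf{0})\}$ reduces the minimum to $\|\nabla\bm{\psi}_{j,k}(\mathbf{0})\|_\infty$, giving \eqref{sparsity-equi-mini-special-block-diff}.

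I expect the main obstacle to be the bookkeeping in this last step, namely correctly converting ``there exists a solution with at most $l_j$ nonzero blocks'' into the existence of the index set $\{k_{j,i}:i\in\mathbb{N}_{l_j}\}$ while accounting for the slack between the achieved level $l'_j$ and the prescribed bound $l_j$, together with the care needed in the zero-block test to guarantee attainment of the minimum over $\partial\bm{\psi}_{j,k}(\mathbf{0})$. The decoupling and the Fermat-rule computation themselves are routine.
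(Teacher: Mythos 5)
Your proof is correct, but it takes a more self-contained route than the paper. The paper's proof stops at the coarser level: it uses the $\mathcal{S}_{n,d}$-separability of $\bm{\psi}$ and $\|\cdot\|_1$ to split \eqref{optimization_problem} into the $d$ single-parameter problems \eqref{optimization_problem-single-parameter}, and then simply invokes Theorem 3.2 of \cite{Liu2023parameter} to characterize the $\mathcal{S}_{m_j,q_j}$-block sparsity of each $\mathbf{u}_j^*$; the differentiable case is handled exactly as you do. You instead push the decoupling all the way down to the $(j,k)$-subproblems $\min\{\bm{\psi}_{j,k}(\cdot)+\lambda_j\|\cdot\|_1\}$ and re-derive the cited characterization from scratch via the Fermat rule, the identity $\partial\|\cdot\|_1(\mathbf{0})=\{\mathbf{x}:\|\mathbf{x}\|_\infty\le 1\}$, and attainment of the minimum over the compact set $\partial\bm{\psi}_{j,k}(\mathbf{0})$. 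What your version buys is transparency: it makes explicit why the threshold is $\min\{\|\mathbf{y}\|_\infty:\mathbf{y}\in\partial\bm{\psi}_{j,k}(\mathbf{0})\}$, it supplies the existence of minimizers directly from coercivity ($\bm{\psi}_{j,k}\ge 0$ plus the $\ell_1$ term), and it handles the bookkeeping between the achieved level $l_j'$ and the prescribed bound $l_j$ (padding the nonzero-block index set to cardinality $l_j$) in exactly the way the cited theorem encapsulates. What the paper's version buys is brevity and reuse of established machinery. The only point worth flagging is that your argument silently reproves an external result; as a reading of this paper's proof it is a superset, not a mismatch.
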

\begin{proof}
   Observing from the block separability of functions $\bm{\psi}$ and  $\|\cdot\|_1$, we conclude that $\mathbf{u}^*\in\mathbb{R}^n$ is a solution of problem \eqref{optimization_problem} if and only if for each $j\in\mathbb{N}_d$, $\mathbf{u}^*_j\in\mathbb{R}^{m_j}$ is a solution of problem
 \eqref{optimization_problem-single-parameter}. Theorem 3.2 in \cite{Liu2023parameter} ensures that for each $j\in\mathbb{N}_d$, $\mathbf{u}^*_j\in\mathbb{R}^{m_j}$ is a solution of problem
 \eqref{optimization_problem-single-parameter} and has the $\mathcal{S}_{m_j,q_j}$-block sparsity of level $l'_j\leq l_j$ for some $l_j\in\mathbb{Z}_{q_j+1}$ if and only if there exist distinct
$k_{j,i}\in\mathbb{N}_{q_j}$, $i\in\mathbb{N}_{l_j}$, such that
\eqref{sparsity-equi-mini-special-block} holds. For the case that $\bm{\psi}_{j,k}$, $j\in\mathbb{N}_d$, $k\in\mathbb{N}_{q_j}$ are all differentiable, it suffices to notice that the subdifferential of $\bm{\psi}_{j,k}$ at zero are the singleton $\nabla\bm{\psi}_{j,k}(\mathbf{0})$. 
This together with inequality \eqref{sparsity-equi-mini-special-block} leads to inequality 
 \eqref{sparsity-equi-mini-special-block-diff}.
\end{proof}

Unlike in Theorems \ref{sparsity_original} 
 and \ref{sparsity-special-case}, the characterization stated in Theorem \ref{sparsity-special-case-block} can be taken as a multi-parameter choice strategy. That is, when the fidelity
term is block separable, if for each $j\in\mathbb{N}_d$, the regularization parameter
$\lambda_j$ is chosen so that inequality \eqref{sparsity-equi-mini-special-block} (or \eqref{sparsity-equi-mini-special-block-diff}) holds, then the regularization problem  \eqref{optimization_problem}
has a solution with each sub-vector having a block sparsity of a prescribed level. The choice of the parameters depends on the subdifferentials
or the gradients of the functions $\bm{\psi}_{j,k}$, $j\in\mathbb{N}_d$, $k\in\mathbb{N}_{q_j}$. 

We also specialize Theorem \ref{sparsity-special-case-block} to the regularization problem
\eqref{lasso_multi-parameter}. For this purpose, we require that the fidelity
term $\bm{\psi}$ defined by \eqref{fidelity_term}
is block separable. Associated with the partition  $\mathcal{S}_{n,d}:=\left\{S_{n,1},S_{n,2},\ldots, S_{n,d}\right\}$ for $\mathbb{N}_{n}$ with $S_{n,j}:=\{p_{j-1}+i:i\in\mathbb{N}_{m_j}\}$,  $j\in\mathbb{N}_d,$ we decompose matrix $\mathbf{A}\in \mathbb{R}^{t\times n}$ into $d$ sub-matrices by setting 
$$
\mathbf{A}_{[j]}:=[\mathbf{A}_{(i)}:i\in S_{n,j}]\in\mathbb{R}^{t\times m_j},\ j\in\mathbb{N}_d.
$$
By lemma 3.4 of \cite{Liu2023parameter},
the fidelity term $\bm{\psi}$ defined by \eqref{fidelity_term}
is $\mathcal{S}_{n,d}$-block separable if and only if there holds 
\begin{equation}\label{S_nd_block_separable}
(\mathbf{A}_{[j]})^{\top}\mathbf{A}_{[k]}=\mathbf{0},\ \mbox{for all}\ j,k\in\mathbb{N}_d \ \mbox{and}\ j\neq{k}.
\end{equation}
It follows from the decomposition of $\mathbf{A}$ and that of each vector $\mathbf{u}$ in $\mathbb{R}^n$ with respect to $\mathcal{S}_{n,d}$ that $\mathbf{A}\mathbf{u}=\sum_{j\in\mathbb{N}_d}\mathbf{A}_{[j]}\mathbf{u}_j$, for all $\mathbf{u}\in\mathbb{R}^n$.
According to this equation and condition \eqref{S_nd_block_separable}, we represent  $\bm{\psi}$ defined by \eqref{fidelity_term} as in \eqref{block_separable_psi} with $\bm{\psi}_{j}$,
$j\in\mathbb{N}_d$, being defined by
\begin{equation}\label{psi_j_u_j}
\bm{\psi}_{j}(\mathbf{u}_{j})
:=\frac{1}{2}\|\mathbf{A}_{[j]}
\mathbf{u}_j\|_2^2-\mathbf{x}^{\top}\mathbf{A}_{[j]}
\mathbf{u}_j+\frac{1}{2d}\mathbf{x}^{\top}
\mathbf{x},
\  \mathbf{u}_j\in\mathbb{R}^{m_j}.
\end{equation}
To describe the block separability of functions $\bm{\psi}_{j}$, $j\in\mathbb{N}_d$, we recall that for each $j\in\mathbb{N}_d$, $\mathcal{S}_{m_j,q_j}:=\left\{S_{m_j,1},S_{m_j,2},\ldots, S_{m_j,q_j}\right\}$ is a partition of $\mathbb{N}_{m_j}$. 
Associated with the partition $\mathcal{S}_{m_j,q_j}$, matrix $\mathbf{A}_{[j]}$ can be decomposed into $q_j$ sub-matrices by setting 
$$
\mathbf{A}_{[j,k]}:=[(\mathbf{A}_{[j]})_{(i)}:i\in S_{m_j,k}]\in\mathbb{R}^{t\times s_{j,k}},\ k\in\mathbb{N}_{q_j}.
$$
It is clear that the last two terms in the right hand side of equation \eqref{psi_j_u_j} are both $\mathcal{S}_{m_j,q_j}$-block separable.
Hence, again by lemma 3.4 of \cite{Liu2023parameter}, we conclude that the functions $\bm{\psi}_{j}$ with the form \eqref{psi_j_u_j} is $\mathcal{S}_{m_j,q_j}$-block separable if and only if there holds 
\begin{equation}\label{S_mj_qj_block_separable}
(\mathbf{A}_{[j,k]})^{\top}\mathbf{A}_{[j,l]}=\mathbf{0},\ \mbox{for all}\ k,l\in\mathbb{N}_{q_j} \ \mbox{and}\ k\neq{l}.
\end{equation}
We represent $\bm{\psi}_{j}$, $j\in\mathbb{N}_d$, as in \eqref{block_separable_psi-j} when condition \eqref {S_mj_qj_block_separable} holds. For each $j\in\mathbb{N}_d$, the decomposition of $\mathbf{A}_{[j]}$ and that of each vector $\mathbf{u}_j$ in $\mathbb{R}^{m_j}$ with respect to $\mathcal{S}_{m_j,q_j}$ lead to $\mathbf{A}_{[j]}\mathbf{u}_j=\sum_{k\in\mathbb{N}_{q_j}}\mathbf{A}_{[j,k]}\mathbf{u}_{j,k}$, for all $\mathbf{u}_j\in\mathbb{R}^{m_j}$.
Substituting the above equation into definition \eqref{psi_j_u_j} with noting that condition \eqref {S_mj_qj_block_separable} holds, we represent $\bm{\psi}_{j}$ as in \eqref{block_separable_psi-j} with $\bm{\psi}_{j,k}$,
$k\in\mathbb{N}_{q_j}$, having the form
\begin{equation}\label{psi_jk_u_jk}
\bm{\psi}_{j,k}(\mathbf{u}_{j,k})
:=\frac{1}{2}\|\mathbf{A}_{[j,k]}
\mathbf{u}_{j,k}\|_2^2-\mathbf{x}^{\top}\mathbf{A}_{[j,k]}
\mathbf{u}_{j,k}+\frac{1}{2dq_j}\mathbf{x}^{\top}
\mathbf{x},
\  \mathbf{u}_{j,k}\in\mathbb{R}^{s_{j,k}}.
\end{equation}

We now apply Theorem \ref{sparsity-special-case-block} to the regularization problem \eqref{lasso_multi-parameter} when the matrix $\mathbf{A}$ satisfies conditions \eqref{S_nd_block_separable} and \eqref{S_mj_qj_block_separable}.
\begin{corollary}\label{block_separabel_parameter_choice}
Suppose that $\mathbf{x}\in\mathbb{R}^{t}$ and $\mathbf{A}\in\mathbb{R}^{t\times n}$ satisfies conditions \eqref{S_nd_block_separable} and \eqref{S_mj_qj_block_separable}. Then the regularization problem \eqref{lasso_multi-parameter} with $\lambda_j>0$, $j\in\mathbb{N}_{d}$, has a solution $\mathbf{u}^*$, where for each $j\in\mathbb{N}_{d}$, $\mathbf{u}^{*}_j$ having the $\mathcal{S}_{m_j,q_j}$-block sparsity of level $l'_j\leq l_j$ for some $l_j\in \mathbb{Z}_{q_j+1}$
if and only if for each $j\in\mathbb{N}_d$, there exist distinct $k_{j,i}\in\mathbb{N}_{q_j}$, $i\in\mathbb{N}_{l_j}$, such that $\lambda_j\geq\big\|(\mathbf{A}_{[j,k]})^{\top}\mathbf{x}\big\|_{\infty}$, for all $k\in\mathbb{N}_{q_j}\setminus{\{k_{j,i}:i\in\mathbb{N}_{l_j}\}}$.
\end{corollary}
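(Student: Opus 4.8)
The plan is to apply Theorem~\ref{sparsity-special-case-block} in its differentiable special case and then to convert the abstract threshold \eqref{sparsity-equi-mini-special-block-diff} into the explicit bound stated in the corollary by computing the gradients $\nabla\bm{\psi}_{j,k}(\mathbf{0})$.

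First I would verify that the hypotheses of Theorem~\ref{sparsity-special-case-block} are satisfied. The discussion preceding the corollary already shows that under condition \eqref{S_nd_block_separable} the fidelity term $\bm{\psi}$ defined by \eqref{fidelity_term} is $\mathcal{S}_{n,d}$-block separable with components $\bm{\psi}_j$ of the form \eqref{psi_j_u_j}, and that under condition \eqref{S_mj_qj_block_separable} each $\bm{\psi}_j$ is in turn $\mathcal{S}_{m_j,q_j}$-block separable with components $\bm{\psi}_{j,k}$ given by \eqref{psi_jk_u_jk}. Since each $\bm{\psi}_{j,k}$ is a quadratic function, it is convex and differentiable, so Theorem~\ref{sparsity-special-case-block} applies and its differentiable conclusion \eqref{sparsity-equi-mini-special-block-diff} holds verbatim for this model.

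The only computation is the gradient of $\bm{\psi}_{j,k}$ at the origin. Differentiating \eqref{psi_jk_u_jk} gives $\nabla\bm{\psi}_{j,k}(\mathbf{u}_{j,k})=(\mathbf{A}_{[j,k]})^{\top}\mathbf{A}_{[j,k]}\mathbf{u}_{j,k}-(\mathbf{A}_{[j,k]})^{\top}\mathbf{x}$, where the constant term $\frac{1}{2dq_j}\mathbf{x}^{\top}\mathbf{x}$ drops out. Evaluating at $\mathbf{u}_{j,k}=\mathbf{0}$ yields $\nabla\bm{\psi}_{j,k}(\mathbf{0})=-(\mathbf{A}_{[j,k]})^{\top}\mathbf{x}$, and therefore $\|\nabla\bm{\psi}_{j,k}(\mathbf{0})\|_{\infty}=\|(\mathbf{A}_{[j,k]})^{\top}\mathbf{x}\|_{\infty}$. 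Substituting this identity into \eqref{sparsity-equi-mini-special-block-diff} turns the abstract threshold into the stated bound $\lambda_j\geq\|(\mathbf{A}_{[j,k]})^{\top}\mathbf{x}\|_{\infty}$ for all $k\in\mathbb{N}_{q_j}\setminus\{k_{j,i}:i\in\mathbb{N}_{l_j}\}$, which is exactly the claimed equivalence; the bookkeeping on the sparsity level $l'_j\leq l_j$ is inherited unchanged from Theorem~\ref{sparsity-special-case-block}.

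The argument is essentially mechanical, so I do not anticipate a genuine obstacle. The main point requiring care is the bookkeeping that licenses the application of Theorem~\ref{sparsity-special-case-block}, namely confirming that conditions \eqref{S_nd_block_separable} and \eqref{S_mj_qj_block_separable} deliver precisely the two-level block-separable structure assumed there. Since this has already been carried out in the text, the proof reduces to citing that structure, applying the theorem, and inserting the gradient computation above.
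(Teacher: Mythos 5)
Your proposal is correct and follows the paper's own proof essentially verbatim: both arguments invoke the block-separable structure established in the text under conditions \eqref{S_nd_block_separable} and \eqref{S_mj_qj_block_separable}, apply Theorem~\ref{sparsity-special-case-block} in its differentiable form, and conclude by computing $\nabla\bm{\psi}_{j,k}(\mathbf{0})=-(\mathbf{A}_{[j,k]})^{\top}\mathbf{x}$ and substituting into \eqref{sparsity-equi-mini-special-block-diff}. No gaps.
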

\begin{proof}
As pointed out before, condition \eqref{S_nd_block_separable} ensures that the fidelity term $\bm{\psi}$ defined by \eqref{fidelity_term} is $\mathcal{S}_{n,d}$-block separable and has the form  \eqref{block_separable_psi} with $\bm{\psi}_{j}$,
$j\in\mathbb{N}_d$, being defined by
\eqref{psi_j_u_j}. Moreover, condition \eqref{S_mj_qj_block_separable} guarantees that for each $j\in\mathbb{N}_d$, the function $\bm{\psi}_{j}$ is $\mathcal{S}_{m_j,q_j}$-block separable and can be represented as in \eqref{block_separable_psi-j} with $\bm{\psi}_{j,k}$,
$k\in\mathbb{N}_{q_j}$, having the form \eqref{psi_jk_u_jk}. Clearly, $\bm{\psi}_{j,k},$ $j\in\mathbb{N}_d$, $k\in\mathbb{N}_{q_j}$, are all convex and differentiable functions.
Consequently, we conclude by Theorem \ref{sparsity-special-case-block} that the regularization problem \eqref{lasso_multi-parameter} with $\lambda_j>0$, $j\in\mathbb{N}_{d}$, has a solution $\mathbf{u}^*$ with for each $j\in\mathbb{N}_{d}$, $\mathbf{u}^{*}_j$ having the $\mathcal{S}_{m_j,q_j}$-block sparsity of level $l'_j\leq l_j$ for some $l_j\in \mathbb{Z}_{q_j+1}$
if and only if for each $j\in\mathbb{N}_d$ there exist distinct $k_{j,i}\in\mathbb{N}_{q_j}$, $i\in\mathbb{N}_{l_j}$, such that inequality \eqref{sparsity-equi-mini-special-block-diff} holds. Note that for each $j\in\mathbb{N}_d$, $\nabla\bm{\psi}_{j,k}(\mathbf{0})=-(\mathbf{A}_{[j,k]})^{\top}\mathbf{x}$ for all $k\in\mathbb{N}_{q_j}$. Substituting this equation into inequality \eqref{sparsity-equi-mini-special-block-diff} leads directly to the desired inequality.
\end{proof}  

\section{Sparsity-guided multi-parameter selection strategy}\label{Itera}

Theorem \ref{sparsity_original} establishes how each regularization parameter $\lambda_j$ influences the sparsity of the solution to problem \eqref{optimization_problem_under_Bj} under its associated transform matrix $\mathbf{B}_j$. Building on this result, we propose an iterative scheme for selecting multiple regularization parameters to achieve prescribed sparsity levels with respect to the different transform matrices.


Theorem \ref{sparsity_original} shows that if $\mathbf{u}^*\in\mathbb{R}^n$ is a solution of problem \eqref{optimization_problem_under_Bj} with $\lambda_j^*>0,$ $j\in \mathbb{N}_d,$ and for each $j\in\mathbb{N}_d$, $\mathbf{u}^*$ has sparsity of level $l_j^*\in\mathbb{Z}_{m_j+1}$ under $\mathbf{B}_{j}$, then there exist  $\mathbf{a}\in\partial\bm{\psi}(\mathbf{u}^*)$ and $\mathbf{b}:=[b_j:j\in\mathbb{N}_{p_d}]\in\mathcal{N}(\mathbf{B}^{\top})$ such that for each $j\in\mathbb{N}_d$, $\lambda_j^*$ satisfies conditions \eqref{sparsity-equi-mini-2} and  \eqref{sparsity-equi-mini-3}. According to these conditions, we introduce for each $j\in\mathbb{N}_d$, a sequence $\gamma_{j,i}(\mathbf{u}^{*}),$ $i\in\mathbb{N}_{m_j}$ by  
\begin{equation}\label{rji}
\gamma_{j,i}(\mathbf{u}^{*}):=\left|(\mathbf{B}_{(p_{j-1}+i)}')^\top\mathbf{a}+b_{p_{j-1}+i}\right|, \ i\in\mathbb{N}_{m_j},
\end{equation}
and rearrange them in a nondecreasing order: 
\begin{equation}\label{nondecreasing order}
\gamma_{j, i_1}(\mathbf{u}^{*})\leq \gamma_{j, i_2}(\mathbf{u}^{*})\leq 
\cdots \leq \gamma_{j, i_{m_j}}(\mathbf{u}^{*}), \ \mbox{with}\ \{i_1,  i_2, \ldots, i_{m_j}\}=\mathbb{N}_{m_j}.
\end{equation}
The equality \eqref{sparsity-equi-mini-2} and the inequality \eqref{sparsity-equi-mini-3} that the parameter $\lambda_j^*$ needs to satisfy corresponds the non-zero components
and the zero components of $\mathbf{B}_{j}\mathbf{u}^*$,  respectively. Thus, if $\lambda_j^*>\gamma_{j, i}(\mathbf{u}^{*})$, then $(\mathbf{B}_{j}\mathbf{u}^*)_{i}$ must be zero and if $\lambda_j^*=\gamma_{j, i}(\mathbf{u}^{*})$, then $(\mathbf{B}_{j}\mathbf{u}^*)_{i}$ may be zero or nonzero. With the help of the observation above, we present the following result. 
\begin{theorem}\label{necessary—new}
Let $\bm{\psi}:\mathbb{R}^n\to\mathbb{R}_+$ be a convex function, for each $j\in\mathbb{N}_d$, $\mathbf{B}_{j}$ be an $m_j\times n$ matrix and  $\mathbf{B}$ be defined by \eqref{block-matrix-B}. Suppose that $\mathbf{u}^*$ is a solution of problem \eqref{optimization_problem_under_Bj} with $\lambda_j^*>0,$ $j\in \mathbb{N}_d$, and for each $j\in\mathbb{N}_d$, $\gamma_{j,i}(\mathbf{u}^{*}),$ $i\in\mathbb{N}_{m_j}$, defined by \eqref{rji}, are ordered as in \eqref{nondecreasing order}. Then the following statements hold true. 

(a) If for each $j\in\mathbb{N}_d$, $\mathbf{u}^*$ has sparsity of level $l_j^*\in\mathbb{Z}_{m_j+1}$ under $\mathbf{B}_{j}$, then for each $j\in\mathbb{N}_d$, $\lambda_j^*$ satisfies
\begin{equation}\label{Order}
   \gamma_{j, i_1}(\mathbf{u}^{*})\leq \cdots \leq \gamma_{j, i_{m_j-l^*_j}}(\mathbf{u}^{*})\leq\lambda_j^*=\gamma_{j, i_{m_j-l^*_j+1}}(\mathbf{u}^{*})=\cdots=\gamma_{j, i_{m_j}}(\mathbf{u}^{*}).
\end{equation} 

(b) If for each $j\in\mathbb{N}_d$, $\mathbf{u}^*$ has sparsity of level $l_j^*\in\mathbb{Z}_{m_j+1}$ under $\mathbf{B}_{j}$, then for each $j\in\mathbb{N}_d$, there exists $l_j\in\mathbb{Z}_{m_j+1}$ with $l_j\geq l_j^*$ such that $\lambda_j^*$ satisfies
\begin{equation}\label{Order-new}
   \gamma_{j, i_1}(\mathbf{u}^{*})\leq \cdots \leq \gamma_{j, i_{m_j-l_j}}(\mathbf{u}^{*})<\lambda_j^*=\gamma_{j, i_{m_j-l_j+1}}(\mathbf{u}^{*})=\cdots=\gamma_{j, i_{m_j}}(\mathbf{u}^{*}).
\end{equation}

(c) If for each $j\in\mathbb{N}_d$, there exists $l_j\in\mathbb{Z}_{m_j+1}$
such that $\lambda_j^*$ satisfies
inequality \eqref{Order-new}, then for each $j\in\mathbb{N}_d$, $\mathbf{u}^*$ has sparsity of level $l_j^*\leq l_j$ under $\mathbf{B}_{j}$.
\end{theorem}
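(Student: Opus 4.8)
The plan is to distill from the hypothesis that $\mathbf{u}^*$ solves \eqref{optimization_problem_under_Bj} a single algebraic dichotomy satisfied by the quantities $\gamma_{j,i}(\mathbf{u}^*)$, and then to read off all three statements as elementary consequences of how this dichotomy interacts with the nondecreasing ordering \eqref{nondecreasing order}. The three parts are not really separate arguments; they are three vantage points on the same counting fact.

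First I would invoke Theorem \ref{sparsity_original} to secure subgradient data $\mathbf{a}\in\partial\bm{\psi}(\mathbf{u}^*)$ and $\mathbf{b}\in\mathcal{N}(\mathbf{B}^\top)$ for which \eqref{sparsity-equi-mini-2} and \eqref{sparsity-equi-mini-3} hold, and I would take these to be precisely the vectors entering the definition \eqref{rji} of $\gamma_{j,i}(\mathbf{u}^*)$. The crucial step is then to observe, upon taking absolute values, that \eqref{sparsity-equi-mini-2} forces $\gamma_{j,k_i}(\mathbf{u}^*)=\lambda_j^*$ at every index $k_i$ where $(\mathbf{B}_j\mathbf{u}^*)_{k_i}\neq 0$ (using $\lambda_j^*>0$ and $|\mathrm{sign}(\cdot)|=1$), while \eqref{sparsity-equi-mini-3} gives $\gamma_{j,i}(\mathbf{u}^*)\leq\lambda_j^*$ at every index where $(\mathbf{B}_j\mathbf{u}^*)_i=0$. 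Thus $\lambda_j^*=\max_i\gamma_{j,i}(\mathbf{u}^*)$, attained at (at least) all $l_j^*$ nonzero indices; equivalently, the set of nonzero indices of $\mathbf{B}_j\mathbf{u}^*$ is contained in $\{i:\gamma_{j,i}(\mathbf{u}^*)=\lambda_j^*\}$.

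With this dichotomy in hand the three parts follow by counting. For (a): having exactly $l_j^*$ nonzero indices, each with $\gamma$-value equal to the maximum $\lambda_j^*$, forces at least $l_j^*$ of the ordered values to attain $\lambda_j^*$; since $\lambda_j^*$ is the largest value, these occupy the top $l_j^*$ ordered slots, yielding the tail equalities in \eqref{Order} together with $\gamma_{j,i_{m_j-l_j^*}}(\mathbf{u}^*)\leq\lambda_j^*$. For (b) I would instead set $l_j$ equal to the exact cardinality of $\{i:\gamma_{j,i}(\mathbf{u}^*)=\lambda_j^*\}$; the inclusion above gives $l_j\geq l_j^*$, and by construction exactly $l_j$ ordered values equal the maximum while the $(m_j-l_j)$-th is strictly smaller, which is the strict chain \eqref{Order-new}. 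For (c), the converse: the hypothesis \eqref{Order-new} asserts that exactly $l_j$ of the $\gamma$-values equal $\lambda_j^*$, and the dichotomy places every nonzero index of $\mathbf{B}_j\mathbf{u}^*$ among them, so the number of nonzero components is at most $l_j$, i.e.\ the sparsity level is $l_j^*\leq l_j$.

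I expect the only genuine subtlety---rather than any real obstacle---to lie in keeping the two index counts distinct. The optimality conditions guarantee only \emph{at least} $l_j^*$ indices saturating $\lambda_j^*$, since a zero component may also meet the bound in \eqref{sparsity-equi-mini-3} with equality; this is exactly why (a) can only claim the weak inequality $\leq$ at position $m_j-l_j^*$, whereas (b), by defining $l_j$ as the true number of saturating indices, upgrades this to a strict inequality. I would also verify the degenerate endpoints $l_j^*=0$ and $l_j=m_j$, where the relevant prefix or suffix of the ordered chain is empty and the stated relations hold vacuously.
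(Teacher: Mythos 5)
Your proposal is correct and follows essentially the same route as the paper: both arguments extract the optimality conditions \eqref{sparsity-equi-mini-2}--\eqref{sparsity-equi-mini-3} from Theorem \ref{sparsity_original}, observe that the nonzero indices of $\mathbf{B}_j\mathbf{u}^*$ saturate $\gamma_{j,i}(\mathbf{u}^*)=\lambda_j^*$ while the rest satisfy $\gamma_{j,i}(\mathbf{u}^*)\leq\lambda_j^*$, and then obtain (a), (b), (c) by counting saturating indices against the nondecreasing ordering. Your explicit remark that only \emph{at least} $l_j^*$ indices need saturate the maximum is in fact a slightly more careful phrasing of the counting step than the paper's own wording, but it leads to the identical conclusion.
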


\begin{proof}
We first prove Item (a). If $\mathbf{u}^*$ is a solution of problem \eqref{optimization_problem_under_Bj} with $\lambda_j^*>0,$ $j\in \mathbb{N}_d$, and for each $j\in\mathbb{N}_d$, $\mathbf{u}^*$ has sparsity of level $l_j^*\in\mathbb{Z}_{m_j+1}$ under $\mathbf{B}_{j}$, then for each $j\in\mathbb{N}_d$, the parameter $\lambda_j^*$, guaranteed by Theorem \ref{sparsity_original}, satisfies equality \eqref{sparsity-equi-mini-2} and inequality \eqref{sparsity-equi-mini-3}. Noting that the subset $\{k_i:i\in \mathbb{N}_{l_j^*}\}$ of $\mathbb{N}_{m_j}$ has the cardinality $l_j^*$, there are exactly $l_j^*$
elements of $\{\gamma_{j, i}(\mathbf{u}^{*}): i\in\mathbb{N}_{m_j}\}$ equal
to $\lambda_j^*$ and the remaining $m_j-l_j^*$ elements less than or equal to $\lambda_j^*$. This together with the order of  $\gamma_{j, i}(\mathbf{u}^{*})$, $i\in\mathbb{N}_{m_j}$ as in \eqref{nondecreasing order} leads to the desired inequality \eqref{Order}.

We next verify Item (b). As has been shown in Item (a), for  each $j\in\mathbb{N}_d$, $\lambda_j^*$ satisfies
inequality \eqref{Order}. 
If there is no element of the
sequence $\{\gamma_{j, i}(\mathbf{u}^{*}): i\in\mathbb{N}_{m_j}\}$ being smaller than $\lambda_j^*$, then inequality \eqref{Order} reduces to $\lambda^*_j=\gamma_{j, i_k}(\mathbf{u}^{*})$, $k\in\mathbb{N}_{m_j}$. We then get inequality \eqref{Order-new} with $l_j:=m_j$. Otherwise, we choose $k_j\in\mathbb{N}_{m_j-l^*_j}$ such that $\gamma_{j, i_{k_j}}(\mathbf{u}^{*})<\lambda_j^*=\gamma_{j, i_{k_j+1}}(\mathbf{u}^{*})$. We then rewrite inequality \eqref{Order} as inequality \eqref{Order-new} with $l_j:=m_j-k_j$. It is clear that $l_j\geq m_j-(m_j-l_j^*)=l_j^*$.

It remains to show Item (c). If $l_j=m_j,$ clearly, the sparsity level $l^*_j$ of $\mathbf{u}^*$ under $\mathbf{B}_{j}$ satisfies $l_j^*\leq l_j$. We now consider the case when $l_j<m_j$. According to Theorem \ref{sparsity_original}, the relation $\gamma_{j, i_1}(\mathbf{u}^{*})\leq \cdots \leq \gamma_{j, i_{m_j-l_j}}(\mathbf{u}^{*})<\lambda_j^*$ leads to  $(\mathbf{B}_{j}\mathbf{u}^{*})_{i_k}= 0,$ for all $k\in\mathbb{N}_{m_j-l_j}$. Hence, $\mathbf{B}_{j}\mathbf{u}^{*}$ has at least $m_j-l_j$ zero components. In other words, the number of nonzero components of $\mathbf{B}_{j}\mathbf{u}^{*}$ is at most $l_j$, that is, $\mathbf{u}^*$ has sparsity of level $l_j^*\leq l_j$ under $\mathbf{B}_{j}$. 
\end{proof}

Our goal is to find regularization parameters $\lambda^*_j$, $j\in\mathbb{N}_d$ that ensures the resulting solution $\mathbf{u}^*$ of problem \eqref{optimization_problem_under_Bj} achieves a prescribed sparsity level $l^*_j$ under each transform matrix $\mathbf{B}_{j}$. According to Item (a) of Theorem \ref{necessary—new}, for each $j\in\mathbb{N}_d$, the parameter $\lambda_j^*$ satisfies inequality \eqref{Order}. Since the sequence $\{\gamma_{j, i}(\mathbf{u}^{*}): i\in\mathbb{N}_{m_j}\}$ depends on the corresponding
solution, inequality \eqref{Order} can not be used directly as a parameter
choice strategy. Instead, it motivates us to propose an iterative scheme. The iteration begins with
initial regularization parameters $\lambda_j^0$, $j\in\mathbb{N}_d$ which are large enough so that for each $j\in\mathbb{N}_d$, the
sparsity level $l_j^0$ of the corresponding solution $\mathbf{u}^0$ of problem \eqref{optimization_problem_under_Bj} under $\mathbf{B}_{j}$ is smaller than the given target sparsity level $l^*_j$. Suppose that at step $k$, we
have $\lambda_j^k$, $j\in\mathbb{N}_d$
and the corresponding solution $\mathbf{u}^k$ with the sparsity level $l_j^k<l^*_j$, $j\in\mathbb{N}_d$ under the transform matrices $\mathbf{B}_{j}$, $j\in\mathbb{N}_d$, respectively. Item (a) of Theorem \ref{necessary—new} ensures that for each $j\in\mathbb{N}_d$, parameter $\lambda_j^k$  satisfies \begin{equation}\label{Order-algorithm}
   \gamma_{j, i^{k}_1}(\mathbf{u}^{k})\leq \cdots \leq \gamma_{j, i^{k}_{m_j-l^k_j}}(\mathbf{u}^{k})\leq\lambda_j^k=\gamma_{j, i^{k}_{m_j-l^k_j+1}}(\mathbf{u}^{k})=\cdots=\gamma_{j, i^{k}_{m_j}}(\mathbf{u}^{k}).
\end{equation} 
We choose parameter $\lambda_j^{k+1}$ at step $k+1$ from the ordered sequence in \eqref{Order-algorithm}. Motivated by inequality \eqref{Order}, we choose $\lambda_j^{k+1}$ as the $(m_j-l^*_j+1)$-th element of the ordered sequence in \eqref{Order-algorithm}, that is,
\begin{equation}\label{iteration-update}
\lambda_j^{k+1}
    :=\gamma_{j, i^{k}_{m_j-l^*_j+1}}(\mathbf{u}^{k}).
\end{equation}
As a result, for each $j\in\mathbb{N}_d$,  parameter $\lambda_j^{k+1}$ satisfies 
\begin{align}\label{Order-algorithm_1}
    \gamma_{j, i^{k}_1}(\mathbf{u}^{k})\leq \cdots \leq \gamma_{j, i^{k}_{m_j-l^*_j}}(\mathbf{u}^{k})\leq\lambda_j^{k+1}
    &=\gamma_{j, i^{k}_{m_j-l^*_j+1}}(\mathbf{u}^{k})\nonumber\\
    &\leq\cdots\leq\gamma_{j, i^{k}_{m_j-l^k_j+1}}(\mathbf{u}^{k})=\cdots=\gamma_{j, i^{k}_{m_j}}(\mathbf{u}^{k}).
\end{align}

Below, we claim that if the iterative scheme converges, then the generated parameters satisfy inequality \eqref{Order} of Theorem \ref{necessary—new}, which is a necessary condition for the resulting solution to attain the target sparsity levels. 
The convergence analysis of the proposed multi-parameter selection scheme
will be addressed in future work. 
We state the assumptions about the convergence of the iterative scheme as follows. 

(A1) For each $j\in\mathbb{N}_d$, the sequence $l_j^k$, $k\in\mathbb{N}$,
generated by the iteration scheme satisfies that $l_j^k\leq l^*_j$ for all $k\in\mathbb{N}$.

(A2) For each $j\in\mathbb{N}_d$, the sequences $\lambda_j^k$, $k\in\mathbb{N}$  generated by the iteration scheme satisfies that $\lambda_j^k\rightarrow\lambda_j$ as $k\rightarrow+\infty$ for some $\lambda_j>0$.

(A3) The solution $\mathbf{u}\in\mathbb{R}^n$ of problem \eqref{optimization_problem_under_Bj} with $\lambda_j$, $j\in\mathbb{N}_d$ satisfies that for each $j\in\mathbb{N}_d$ and each $s\in\mathbb{N}_{m_j}$, $\gamma_{j, i^{k}_s}(\mathbf{u}^{k})\rightarrow\gamma_{j, i_s}(\mathbf{u})$ as $k\rightarrow+\infty$.
\begin{proposition}
    If assumptions (A1), (A2) and (A3) hold, then for each $j\in\mathbb{N}_d$,  $\lambda_j$ satisfies
    \begin{equation}\label{Order-algorithm_2}
    \gamma_{j, i_1}(\mathbf{u})\leq \cdots \leq \gamma_{j, i_{m_j-l^*_j}}(\mathbf{u})\leq\lambda_j
    =\gamma_{j, i_{m_j-l^*_j+1}}(\mathbf{u})=\cdots=\gamma_{j, i_{m_j}}(\mathbf{u}).
\end{equation}
\end{proposition}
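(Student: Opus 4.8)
The plan is to pass to the limit $k\to\infty$ in the sorted chain \eqref{Order-algorithm_1}, relying on (A3) to control each sorted value $\gamma_{j,i^k_s}(\mathbf{u}^k)$ and on (A2) to control the parameter sequence. First I would fix $j\in\mathbb{N}_d$ and record two consequences of the setup. From the step-$k$ chain \eqref{Order-algorithm} the largest sorted value is bounded by the current parameter, so $\gamma_{j,i^k_s}(\mathbf{u}^k)\le\lambda_j^k$ for every $s\in\mathbb{N}_{m_j}$; from \eqref{Order-algorithm_1} and the update rule \eqref{iteration-update} the value at the pivot position equals $\lambda_j^{k+1}$, and since (A1) gives $l_j^k\le l_j^*$ (hence $m_j-l_j^*+1\le m_j-l_j^k+1$), every index $s$ with $m_j-l_j^*+1\le s\le m_j$ satisfies the two-sided bound
\[
\lambda_j^{k+1}=\gamma_{j,i^k_{m_j-l^*_j+1}}(\mathbf{u}^k)\le\gamma_{j,i^k_s}(\mathbf{u}^k)\le\lambda_j^k .
\]

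Next I would take limits termwise. For the leading segment $1\le s\le m_j-l_j^*$ together with the bound $\gamma_{j,i^k_{m_j-l^*_j}}(\mathbf{u}^k)\le\lambda_j^{k+1}$, the inequalities are non-strict and therefore survive the limit, so (A3) and (A2) yield $\gamma_{j,i_1}(\mathbf{u})\le\cdots\le\gamma_{j,i_{m_j-l^*_j}}(\mathbf{u})\le\lambda_j$. For the pivot equality, letting $k\to\infty$ in $\lambda_j^{k+1}=\gamma_{j,i^k_{m_j-l^*_j+1}}(\mathbf{u}^k)$ gives $\lambda_j=\gamma_{j,i_{m_j-l^*_j+1}}(\mathbf{u})$.

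The decisive step is the terminal block of equalities. For each fixed $s$ with $m_j-l_j^*+1\le s\le m_j$, I would squeeze the middle term of the displayed two-sided bound: by (A2) both the lower end $\lambda_j^{k+1}$ and the upper end $\lambda_j^k$ converge to $\lambda_j$, while by (A3) the middle term converges to $\gamma_{j,i_s}(\mathbf{u})$, forcing $\gamma_{j,i_s}(\mathbf{u})=\lambda_j$. Combining this with the leading-segment inequalities produces exactly \eqref{Order-algorithm_2}. The main obstacle is this squeeze, which succeeds only because the whole parameter sequence converges: the lower bound comes from the updated parameter $\lambda_j^{k+1}$ and the upper bound from the current parameter $\lambda_j^k$, and the two collapse to a common limit precisely by virtue of (A2). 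Were (A2) to fail, the terminal sorted values need not coalesce and the block of equalities could break down.
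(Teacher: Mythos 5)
Your proof is correct and follows essentially the same route as the paper: pass to the limit in the ordered chain \eqref{Order-algorithm_1} using (A1)--(A3). The only (cosmetic) difference is in how the terminal block of equalities is collapsed --- you squeeze each $\gamma_{j,i^k_s}(\mathbf{u}^k)$ between $\lambda_j^{k+1}$ and $\lambda_j^k$ directly, whereas the paper first pins down the top term via $\lambda_j^k=\gamma_{j,i^k_{m_j}}(\mathbf{u}^k)$ and then sandwiches the intermediate ones; both arguments rest on exactly the same bounds.
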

\begin{proof}
Note that assumption (A1)   allows us to choose parameter $\lambda_j^{k+1}$ 
 as in \eqref{iteration-update} at step $k+1$. This together with inequality \eqref{Order-algorithm} leads to inequality 
\eqref{Order-algorithm_1}. 
By taking $k\rightarrow+\infty$ on each item of inequality \eqref{Order-algorithm_1} and assumptions (A2) and (A3), we get that 
\begin{equation}\label{Order-algorithm_3}
 \gamma_{j, i_1}(\mathbf{u})\leq \cdots \leq \gamma_{j, i_{m_j-l^*_j}}(\mathbf{u})\leq\lambda_j
    =\gamma_{j, i_{m_j-l^*_j+1}}(\mathbf{u})\leq\cdots\leq\gamma_{j, i_{m_j}}(\mathbf{u}).
\end{equation}
It follows from inequality \eqref{Order-algorithm} that $\lambda_j^k=\gamma_{j, i^{k}_{m_j}}(\mathbf{u}^{k})$ for all $k\in\mathbb{N}$. Taking $k\rightarrow+\infty$ on both sides of this equation, we get that $\lambda_j=\gamma_{j, i_{m_j}}(\mathbf{u}).$  
This together with  inequality \eqref{Order-algorithm_3} leads to the desired inequality \eqref{Order-algorithm_2}.
\end{proof}

The implementation of the iterative scheme involves four key considerations. 

\begin{itemize} \item[\textbf{(1)}]\textbf{Correction of invalid updates} 
 
\vspace{1.2ex}
When $\lambda_j^{k}
=\gamma_{j, i^{k}_{m_j-l^*_j+1}}(\mathbf{u}^{k})$ for some $k\in\mathbb{N}$ and $j\in\mathbb{N}_d$, the update rule \eqref{iteration-update} yields $\lambda_j^{k+1}=\lambda_j^{k}$, resulting in an invalid update. To resolve this, we propose selecting $\lambda^{k+1}_j$ from the set $\{\gamma_{j,i}(\mathbf{u}^{k}):\gamma_{j,i}(\mathbf{u}^{k})<\lambda_j^{k}\}$, motivated by inequality \eqref{Order-new}. Specifically, we define  
$$
\Gamma_j(\mathbf{u}^{k}):=\max\{\gamma_{j,i}(\mathbf{u}^{k}):\gamma_{j,i}(\mathbf{u}^{k})<\lambda_j^{k}\}
$$ 
and update  $\lambda^{k+1}_j$ via
\begin{equation}\label{iteration-update-1}
\lambda^{k+1}_j:=\min\{\gamma_{j, i^{k}_{m_j-l^*_j+1}}(\mathbf{u}^{k}),\Gamma_j(\mathbf{u}^{k})\}.
\end{equation}
 
\item[\textbf{(2)}]\textbf{Fallback for undersized parameters} 
 
\vspace{1.2ex}
Assumption (A1) may not always hold. Suppose that  
 $l_j^k> l^*_j$ for some $k\in\mathbb{N}$ and $j\in\mathbb{N}_d$. This indicates that the parameter $\lambda_j^k$
is too small to achieve the desired sparsity level $l^*_j$.  To correct this, the next parameter $\lambda_j^{k+1}$ must be selected larger than $\lambda_j^k$. However, as shown in inequality \eqref{Order-algorithm}, all elements $\gamma_{j, i^{k}_s}(\mathbf{u}^{k})$, $s\in\mathbb{N}_{m_j}$,
are less than or equal to
$\lambda_j^k$. Consequently, the update rule \eqref{iteration-update-1} cannot produce a value exceeding $\lambda_j^k$. This means that simply proceeding to the next iteration will not sufficiently increase the performance. To resolve this, we must revert to step $k-1$ and select a new $\lambda_j^k$ that is appropriately larger. This adjustment ensures that subsequent iterations can achieve the target sparsity level $l^*_j$.

\vspace{1.2ex}
\item[\textbf{(3)}]\textbf{Tolerance for sparsity mismatches} 
 
\vspace{1.2ex}
Due to the interplay among the 
multiple regularization parameters, we do not require exact matching of sparsity levels but instead allow a tolerance error. For each $j\in\mathbb{N}_d$, let $l_j$ denote the sparsity level of a solution of problem \eqref{optimization_problem_under_Bj} under $\mathbf{B}_j$. With a given tolerance $\epsilon>0$, we say
that the solution achieves target sparsity levels $l^*_j$, $j\in\mathbb{N}_d$ if 
$$
\sum_{j\in\mathbb{N}_d}|l_j-l_j^*|\leq \epsilon.
$$

\item[\textbf{(4)}]\textbf{Efficient computation of critical vectors} 
 
\vspace{1.2ex}
At step $k+1$ of the iterative scheme, selecting the parameter $\lambda_j^{k+1}$ from the ordered candidate sequence in \eqref{Order-algorithm} requires computing  $\gamma_{j, i}(\mathbf{u}^{k})$, $i\in\mathbb{N}_{m_j}$, as defined in \eqref{rji}. This computation involves two main tasks: first, solving problem \eqref{optimization_problem_under_Bj} with $\lambda_j^k>0,$ $j\in \mathbb{N}_d,$ to obtain the solution $\mathbf{u}^k$; and second, determining vectors $\mathbf{a}\in\partial\bm{\psi}(\mathbf{u}^k)$ and $\mathbf{b}\in\mathcal{N}(\mathbf{B}^\top)$ that satisfy \eqref{sparsity-equi-mini-1}, \eqref{sparsity-equi-mini-2} and \eqref{sparsity-equi-mini-3}. Thus, efficient computing $\mathbf{u}^{k}$, $\mathbf{a}$, and $\mathbf{b}$ is crucial for implementing the iterative scheme. In certain special cases, the vectors $\mathbf{a}$ and $\mathbf{b}$ can be determined directly. For instance, when the fidelity term $\bm{\psi}$ is differentiable,  $\mathbf{a}$ reduces to $\nabla\bm{\psi}(\mathbf{u}^k)$. Similarly, if $\mathbf{B}$ has full row rank, $\mathbf{b}$ simplifies to $\mathbf{0}$. However, in the general case where $\bm{\psi}$ is non-differentiable or $\mathbf{B}$ lacks full row rank, explicit expressions for $\mathbf{a}$ or $\mathbf{b}$ in terms of $\mathbf{u}^k$ are unavailable. To address this, we present the numerical procedures for computing the three vectors, with their theoretical derivations deferred to Section \ref{fixed}. Importantly, our proposed method remains universally applicable across all scenarios. 

\quad\ \ We propose a fixed-point proximity algorithm (Algorithm \ref{FPPA}) for solving the reformulated problem \eqref{optimization_problem_under_Bj_3}  with $\lambda_j$, $j\in\mathbb{N}_d$. This algorithm returns a solution $\mathbf{w}\in\mathbb{R}^{p_d+n-r}$ along with auxiliary vectors $\hat{\mathbf{a}}\in\mathbb{R}^n$ and $\mathbf{c}\in\mathbb{R}^{p_d+n-r}$. The solution $\mathbf{u}^*$ of problem \eqref{optimization_problem_under_Bj} with $\lambda_j$, $j\in\mathbb{N}_d$, along with the vectors $\mathbf{a}\in\partial\bm{\psi}(\mathbf{u}^*)$ and $\mathbf{b}\in\mathcal{N}(\mathbf{B}^\top)$ satisfying \eqref{sparsity-equi-mini-1}, \eqref{sparsity-equi-mini-2} and \eqref{sparsity-equi-mini-3}, can then be derived as follows: $\mathbf{u}^*=\mathbf{B}'\mathbf{w}$, $\mathbf{a}=\hat{\mathbf{a}}$, and $\mathbf{b}$ corresponds to the first $p_d$ components of $\mathbf{c}$ (i.e., a subvector of $\mathbf{c}$). In addition, partitioning $\mathbf{w}$ as  $\mathbf{w}:=\scalebox{0.8}{$\begin{bmatrix}
\mathbf{z}\\
\mathbf{v}\end{bmatrix}$}$, where $\mathbf{z}\in\mathbb{R}^{p_d}$ and $\mathbf{v}\in\mathbb{R}^{n-r}$, we observe that for each $j\in\mathbb{N}_d$, the sparsity level of the subvector $\mathbf{z}_j$ matches with that of $\mathbf{u}^*$ under the transform matrix $\mathbf{B}_j$. Let $\mathbb{S}_{+}^s$ denote the set of symmetric and positive definite matrices of order $s\times s$. To ensure the convergence of Algorithm \ref{FPPA}, the matrices $\mathbf{O}\in\mathbb{S}_{+}^{p_d+n-r}$, $\mathbf{P}\in\mathbb{S}_{+}^{n}$, $\mathbf{Q}\in\mathbb{S}_{+}^{p_d+n-r}$ and $\theta\in\mathbb{R}$ require to satisfy 
\eqref{satisfy-M-condition}
and 
\eqref{satisfy-M-condition-H}. 

\begin{algorithm}
  \caption{Fixed-Point Proximity Algorithm for problem \eqref{optimization_problem_under_Bj_3}}
  
  \label{FPPA}
  
  \KwInput{$\bm{\psi}$, $\mathbf{B}'$, $\{\mathbf{I}^{'}_{j}:j\in\mathbb{N}_d\}$,$\{\lambda_{j}:j\in\mathbb{N}_d\}$, $\mathbb{M}$; $\mathbf{O}\in\mathbb{S}_{+}^{p_d+n-r}$, $\mathbf{P}\in\mathbb{S}_{+}^{n}$,$\mathbf{Q}\in\mathbb{S}_{+}^{p_d+n-r}$, $\theta>0$ satisfying \eqref{satisfy-M-condition} and \eqref{satisfy-M-condition-H}.}
  
  \KwInitialization{$\mathbf{w}^0$, $\hat{\mathbf{a}}^0$, $\mathbf{c}^0$.}
  \For{$k = 0,1,2,\ldots$}
  {$\mathbf{w}^{k+1} \gets  \mathrm{prox}_{\sum_{j\in\mathbb{N}_d}\lambda_j\|\cdot\|_{1}\circ\mathbf{I}^{'}_{j},\mathbf{O}}
 \big(\mathbf{w}^{k}+\mathbf{O}^{-1}(\mathbf{B}')^{\top}((\theta-2)\hat{\mathbf{a}}^{k}+(1-\theta)\hat{\mathbf{a}}^{k-1})+\mathbf{O}^{-1}((\theta-2)\mathbf{c}^{k}+(1-\theta)\mathbf{c}^{k-1})\big)$.\\
 $\hat{\mathbf{a}}^{k+1} \gets \mathrm{prox}_{\bm{\psi}^*,\mathbf{P}}
\big(\hat{\mathbf{a}}^{k}+\mathbf{P}^{-1}\mathbf{B}'(\mathbf{w}^{k+1}+\theta(\mathbf{w}^{k+1}-\mathbf{w}^{k}))\big)$.\\
$\mathbf{c}^{k+1} \gets \mathrm{prox}_{\iota_{\mathbb{M}}^*,\mathbf{Q}}\big(\mathbf{c}^{k}+\mathbf{Q}^{-1}(\mathbf{w}^{k+1}+\theta(\mathbf{w}^{k+1}-\mathbf{w}^{k})\big)$.
 }
\KwOutput{$\mathbf{w}^{k+1}$, $\hat{\mathbf{a}}^{k+1}$, $\mathbf{c}^{k+1}$}  
\end{algorithm}

\end{itemize}

By leveraging the update rule \eqref{iteration-update} and incorporating the aforementioned four considerations, we provide in Algorithm \ref{Sparsity-Guided Multi-Parameter Selection Algorithm} an iterative
scheme for selecting multiple regularization parameters that achieve prescribed sparsity
levels in the solution of problem (2.1) under different transform matrices. 

\begin{algorithm}
  \caption{Sparsity-Guided Multi-Parameter Selection Algorithm for problem \eqref{optimization_problem_under_Bj}}
  
  \label{Sparsity-Guided Multi-Parameter Selection Algorithm}
  
  \KwInput{$\mathbf{B}'$, $\bm{\psi}$, $\{l_j^*:j\in\mathbb{N}_{d}\}$, $\epsilon$.}
  
  \KwInitialization{Choose $\{\lambda^0_j:j\in\mathbb{N}_{d}\}$ large enough that guarantees $l^0_j\leq l_j^*$ for all $j\in\mathbb{N}_{d}$.}
  \For{$k = 0,1,2,\ldots$}
  {Solve \eqref{optimization_problem_under_Bj_3} with $\lambda_j^k$, $j\in\mathbb{N}_d$ by Algorithm \ref{FPPA}, get the vectors $\mathbf{w}^{k}=\scalebox{0.9}{$\begin{bmatrix}
\mathbf{z}^{k}\\
\mathbf{v}^{k}\end{bmatrix}$}$, $\hat{\mathbf{a}}^{k}$ and $\mathbf{c}^{k}$, and count the sparsity level $l^{k}_j$ of $\mathbf{z}^{k}_j$.\\
  \If{$\sum_{j\in\mathbb{N}_d}|l_j^k-l_j^*|\leq \epsilon $} {
    \textbf{break}  
  }
  
  \For{$j = 1,2,\ldots,d$}
  {
    \uIf{$l^k_j<l^*_j$}{Compute $\gamma_{j,i}:=\big|(\mathbf{B}_{(p_{j-1}+i)}')^\top\hat{\mathbf{a}}^{k}+c^{k}_{p_{j-1}+i})\big|$, $i\in\mathbb{N}_{m_j}$.\\
  Sort: $\gamma_{j,i_1}\leq \cdots \leq \gamma_{j,i_{m_j}}$ with $\{i_1,i_2,\cdots,i_{m_j}\}=\mathbb{N}_{m_j}$.\\
  Compute $\Gamma_j:=\mathrm{max}\big\{\gamma_{j,i}:\gamma_{j,i}< \lambda^{k}_j, i\in\mathbb{N}_{m_j}\big\}$.\\
  Update $\lambda^{k}_j:=\min\big\{\gamma_{j,i_{m_j-{l_j^*}+1}},\Gamma_j\big\}$.
  }
 \ElseIf{$l^k_j>l^*_j$}
  {Set $s_j=0$.\\
  \For{$i = 1,2,\ldots$}
  {Update $s_j:=s_j+l^{k}_j-l^{*}_j$.\\ Update $\lambda_{j}^{k}:=\min\big\{\gamma_{j,i_{m_j-{l_j^*}+1+s_j}},\Gamma_j\big\}$.\\
  Solve \eqref{optimization_problem_under_Bj_3} with $\lambda_j^k$, $j\in\mathbb{N}_d$ by Algorithm \ref{FPPA}, get the vectors $\mathbf{w}^{k}$, $\hat{\mathbf{a}}^{k}$ and $\mathbf{c}^{k}$, and count the sparsity level $l^{k}_j$ of $\mathbf{z}^{k}_j$.\\
  \If{$l^k_j\leq l_j^*$} {\textbf{break}}
  }
  }
  }
  {Update 
 $\lambda^{k+1}_j$ as $\lambda^{k}_j$ for all $j\in\mathbb{N}_d$.}
  }
  
  \KwOutput{$\{\lambda_j^{k}:j\in\mathbb{N}_d\}$, $\mathbf{w}^{k}$.}  
\end{algorithm}

\section{Fixed-point proximity algorithm}\label{fixed}

In this section, we develop a numerical method based on a fixed-point proximity framework to compute the solution $\mathbf{u}^*$ and the auxiliary vectors $\mathbf{a}$ and $\mathbf{b}$ described in Theorem \ref{sparsity_original}. Motivated by the effectiveness of fixed-point methods in machine learning \cite{Li2020fixed,Li2019a,Polson2015proximal}, image processing \cite{li2015multi,micchelli2011proximity}, and medical imaging \cite{Krol2012preconditioned,zheng2019sparsity}, we introduce a problem-specific fixed-point proximity algorithm tailored to our setting. We also provide a rigorous proof of its convergence.


We begin by recalling the definition of the proximity operator and several useful properties. For $\mathbf{H}\in\mathbb{S}_{+}^s$, we define the weighted inner product of $\mathbf{x}$, $\mathbf{y}\in\mathbb{R}^s$ by $\langle \mathbf{x},\mathbf{y}\rangle_{\mathbf{H}}:=\langle \mathbf{x},\mathbf{H}\mathbf{y}\rangle$ and the weighted $\ell_2$-norm of $\mathbf{x}\in\mathbb{R}^s$ by $\|\mathbf{x}\|_{\mathbf{H}}:=\langle \mathbf{x},\mathbf{x}\rangle_{\mathbf{H}}^{1/2}$. Suppose that $f:\mathbb{R}^s\to \overline{\mathbb{R}}$ is a convex function, with $\mathrm{dom}(f)\neq{\emptyset}.$ The proximity operator $\text{prox}_{f,\mathbf{H}}:\mathbb{R}^s\to\mathbb{R}^s$ of $f$ with respect to $\mathbf{H}\in\mathbb{S}_{+}^s$ is defined for $\mathbf{w}\in\mathbb{R}^s$ by 
\begin{equation}\label{proximity operator}
\text{prox}_{f,\mathbf{H}}(\mathbf{x}):=\argmin\left\{\frac{1}{2}\|\mathbf{y}-\mathbf{x}\|_{\mathbf{H}}^2+f(\mathbf{y}):\mathbf{y}\in\mathbb{R}^s\right\}.
\end{equation}
In the case that $\mathbf{H}$ coincides with the $s\times s$ identity matrix $\mathbf{I}_s$, $\text{prox}_{f,\mathbf{I}_s}$ will be
abbreviated as  $\text{prox}_{f}$. It is known \cite{micchelli2011proximity} that the proximity operator of a convex function is intimately related to its subdifferential. Specifically, if $f$ is a convex function from $\mathbb{R}^s$ to $\overline{\mathbb{R}}$, then for all $\mathbf{x}\in\mathrm{dom}(f)$, $\mathbf{y}\in\mathbb{R}^s$ and  $\mathbf{H}\in\mathbb{S}_{+}^s$
\begin{equation}\label{relation_prox_subdiff}
\mathbf{Hy}\in\partial f(\mathbf{x}) \ 
\text{if and only if} \ \mathbf{x}=\mathrm{prox}_{f,\mathbf{H}}(\mathbf{x}+\mathbf{y}).    
\end{equation}
The conjugate function of a convex function $f:\mathbb{R}^s\to \overline{\mathbb{R}}$, denoted by $f^* : \mathbb{R}^s \to \overline{\mathbb{R}}$, is defined by 
$$
f^*(\mathbf{y}):=\sup\{\langle \mathbf{x},\mathbf{y}\rangle-f(\mathbf{x}):\mathbf{x}\in\mathbb{R}^s\}, \ \ \mbox{for all}\ \ \mathbf{y}\in\mathbb{R}^s.
$$
There is a duality relationship between the subdifferentials of $f$ and its conjugate $f^*$: for all $\mathbf{x} \in \mathrm{dom}(f)$ and $\mathbf{y} \in \mathrm{dom}(f^*)$,
\begin{equation}\label{sub_conjugate}
\mathbf{x}\in \partial f^*(\mathbf{y}) \  \text{if and only if} \ \mathbf{y}\in\partial f(\mathbf{x}).
\end{equation}
This yields a fundamental identity between the proximity operators of $f$ and its conjugate $f^*$:
$$
\mathrm{prox}_{f}=\mathbf{I}_s-\mathrm{prox}_{f^*}.
$$
    

We establish in the following proposition the
fixed-point equation formulation of the solution to problem \eqref{optimization_problem_under_Bj_3}.

\begin{proposition}\label{FPPA_deformation}
Suppose that $\bm{\psi}:\mathbb{R}^n\to\mathbb{R}_+$ is a convex function and for each $j\in\mathbb{N}_d$, $\mathbf{B}_{j}$ is an $m_j\times n$ matrix. Let $\mathbf{B}$ be defined as in \eqref{block-matrix-B}. Then the following statements hold true.

(a) If $\mathbf{w}\in\mathbb{R}^{p_d+n-r}$ is a solution of problem \eqref{optimization_problem_under_Bj_3} with $\lambda_j$, $j\in\mathbb{N}_d$, then there exist vectors $\hat{\mathbf{a}}\in\mathbb{R}^n$ and $\mathbf{c}\in\mathbb{R}^{p_d+n-r}$ satisfying 
\begin{align}
\mathbf{w}&=\mathrm{prox}_{\sum_{j\in\mathbb{N}_d}\lambda_j\|\cdot\|_{1}\circ\mathbf{I}^{'}_{j},\mathbf{O}}\left(\mathbf{w}-\mathbf{O}^{-1}(\mathbf{B}')^{\top}\hat{\mathbf{a}}-\mathbf{O}^{-1}\mathbf{c}\right),  \label{FPPA_nondiff1}\\
\hat{\mathbf{a}}&=\mathrm{prox}_{\bm{\psi}^*,\mathbf{P}}(\mathbf{P}^{-1}\mathbf{B}'\mathbf{w}+\hat{\mathbf{a}}), \label{FPPA_nondiff2}\\
\mathbf{c}&=\mathrm{prox}_{\iota_{\mathbb{M}}^*,\mathbf{Q}}(\mathbf{Q}^{-1}\mathbf{w}+\mathbf{c}), \label{FPPA_nondiff3}
\end{align}
for any matrices $\mathbf{O}\in\mathbb{S}_{+}^{p_d+n-r}$, $\mathbf{P}\in\mathbb{S}_{+}^{n}$ and $\mathbf{Q}\in\mathbb{S}_{+}^{p_d+n-r}$.

(b) If there exist vectors $\mathbf{w}\in\mathbb{R}^{p_d+n-r}$, $\hat{\mathbf{a}}\in\mathbb{R}^n$, $\mathbf{c}\in\mathbb{R}^{p_d+n-r}$ and matrices $\mathbf{O}\in\mathbb{S}_{+}^{p_d+n-r}$, $\mathbf{P}\in\mathbb{S}_{+}^{n}$, $\mathbf{Q}\in\mathbb{S}_{+}^{p_d+n-r}$ satisfying equations \eqref{FPPA_nondiff1}, \eqref{FPPA_nondiff2} and \eqref{FPPA_nondiff3}, then $\mathbf{w}$ is a solution of problem \eqref{optimization_problem_under_Bj_3} with $\lambda_j$, $j\in\mathbb{N}_d$.
\end{proposition}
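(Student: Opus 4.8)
The plan is to characterize the solutions of \eqref{optimization_problem_under_Bj_3} through the Fermat rule and then to translate the resulting subdifferential inclusion, term by term, into the three proximity-operator equations \eqref{FPPA_nondiff1}--\eqref{FPPA_nondiff3}. Write $g:=\sum_{j\in\mathbb{N}_d}\lambda_j\|\cdot\|_{1}\circ\mathbf{I}^{'}_{j}$ for the regularizer. Since $\bm{\psi}$ is a finite-valued convex function on $\mathbb{R}^n$, it is continuous, and so are $\bm{\psi}\circ\mathbf{B}'$ and $g$; only $\iota_{\mathbb{M}}$ fails to be continuous. Grouping $\iota_{\mathbb{M}}$ against the continuous remainder and invoking the sum rule together with the chain rule \eqref{chain-rule}, the Fermat rule shows that $\mathbf{w}$ solves \eqref{optimization_problem_under_Bj_3} if and only if $\mathbf{0}\in(\mathbf{B}')^{\top}\partial\bm{\psi}(\mathbf{B}'\mathbf{w})+\partial\iota_{\mathbb{M}}(\mathbf{w})+\partial g(\mathbf{w})$.

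For part (a), I would introduce the two auxiliary vectors by selecting $\hat{\mathbf{a}}\in\partial\bm{\psi}(\mathbf{B}'\mathbf{w})$ and $\mathbf{c}\in\partial\iota_{\mathbb{M}}(\mathbf{w})$, so that the optimality condition collapses to the single inclusion $-(\mathbf{B}')^{\top}\hat{\mathbf{a}}-\mathbf{c}\in\partial g(\mathbf{w})$. Each of these three subgradient relations then converts directly into one of the stated proximity equations. Applying \eqref{relation_prox_subdiff} with $\mathbf{H}=\mathbf{O}$ to the inclusion $-(\mathbf{B}')^{\top}\hat{\mathbf{a}}-\mathbf{c}\in\partial g(\mathbf{w})$, that is, taking $\mathbf{y}=-\mathbf{O}^{-1}(\mathbf{B}')^{\top}\hat{\mathbf{a}}-\mathbf{O}^{-1}\mathbf{c}$, yields \eqref{FPPA_nondiff1}. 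For the other two I would first use the conjugate duality \eqref{sub_conjugate} to flip $\hat{\mathbf{a}}\in\partial\bm{\psi}(\mathbf{B}'\mathbf{w})$ into $\mathbf{B}'\mathbf{w}\in\partial\bm{\psi}^*(\hat{\mathbf{a}})$ and $\mathbf{c}\in\partial\iota_{\mathbb{M}}(\mathbf{w})$ into $\mathbf{w}\in\partial\iota_{\mathbb{M}}^*(\mathbf{c})$; applying \eqref{relation_prox_subdiff} with $\mathbf{H}=\mathbf{P}$ and with $\mathbf{H}=\mathbf{Q}$, respectively, then produces \eqref{FPPA_nondiff2} and \eqref{FPPA_nondiff3}. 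Flipping to the conjugates is the conceptually important move: it is what makes $\mathrm{prox}_{\bm{\psi}^*,\mathbf{P}}$ and $\mathrm{prox}_{\iota_{\mathbb{M}}^*,\mathbf{Q}}$, rather than the primal proximity operators, appear, which is exactly what Algorithm \ref{FPPA} needs.

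For part (b), I would run the argument in reverse. Because \eqref{relation_prox_subdiff} and \eqref{sub_conjugate} are equivalences, the three proximity equations \eqref{FPPA_nondiff1}--\eqref{FPPA_nondiff3} are respectively equivalent to $-(\mathbf{B}')^{\top}\hat{\mathbf{a}}-\mathbf{c}\in\partial g(\mathbf{w})$, $\hat{\mathbf{a}}\in\partial\bm{\psi}(\mathbf{B}'\mathbf{w})$, and $\mathbf{c}\in\partial\iota_{\mathbb{M}}(\mathbf{w})$. Using the chain rule identity $(\mathbf{B}')^{\top}\partial\bm{\psi}(\mathbf{B}'\mathbf{w})=\partial(\bm{\psi}\circ\mathbf{B}')(\mathbf{w})$ and then adding the three inclusions gives $\mathbf{0}\in\partial(\bm{\psi}\circ\mathbf{B}')(\mathbf{w})+\partial\iota_{\mathbb{M}}(\mathbf{w})+\partial g(\mathbf{w})$; here only the elementary inclusion $\partial f_1(\mathbf{w})+\partial f_2(\mathbf{w})\subseteq\partial(f_1+f_2)(\mathbf{w})$ is required, which holds without any continuity hypothesis, so $\mathbf{0}$ lies in the subdifferential of the full objective and the Fermat rule delivers that $\mathbf{w}$ solves \eqref{optimization_problem_under_Bj_3}.

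The routine part is the bookkeeping of the weight matrices $\mathbf{O}$, $\mathbf{P}$, $\mathbf{Q}$ and their inverses inside \eqref{relation_prox_subdiff}. The only place demanding genuine care is the correct invocation of the sum and chain rules in part (a), which relies on the continuity of $\bm{\psi}$ and of the $\ell_1$ composition terms and on isolating the sole discontinuous term $\iota_{\mathbb{M}}$. I do not anticipate a serious obstacle, since every transformation is a known equivalence; the real content lies in the choice of splitting and in the duality flip that exposes the conjugate proximity operators used by Algorithm \ref{FPPA}.
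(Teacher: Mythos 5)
Your proposal is correct and follows essentially the same route as the paper: Fermat's rule plus the chain rule to obtain the three subdifferential inclusions, then the equivalences \eqref{relation_prox_subdiff} and \eqref{sub_conjugate} to convert each inclusion into the corresponding proximity equation, with the reverse direction for part (b) obtained by running the same equivalences backwards. Your added remark that part (b) only needs the elementary inclusion $\partial f_1(\mathbf{w})+\partial f_2(\mathbf{w})\subseteq\partial(f_1+f_2)(\mathbf{w})$ is a slight refinement of the paper's presentation but does not change the argument.
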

\begin{proof}
According to Fermat rule and the chain rule \eqref{chain-rule} of the subdifferential, we have that 
$\mathbf{w}\in\mathbb{R}^{p_d+n-r}$ is a solution of problem \eqref{optimization_problem_under_Bj_3} if and only if  
\begin{equation*}\label{nondiff-Fermat-rule-chain-rule}
\mathbf{0}\in(\mathbf{B}')^\top\partial\bm{\psi}(\mathbf{B}'\mathbf{w})+\partial\iota_{\mathbb{M}}(\mathbf{w})+\partial\left(\sum_{j\in\mathbb{N}_d}\lambda_j\|\cdot\|_{1}\circ\mathbf{I}^{'}_{j}\right)(\mathbf{w}).
\end{equation*}
The latter is equivalent to that there exist  $\hat{\mathbf{a}}\in\partial\bm{\psi} (\mathbf{B}'\mathbf{w})$ and $\mathbf{c}\in\partial\iota_{\mathbb{M}}(\mathbf{w})$ such that 
\begin{equation}\label{nondiff-Fermat-rule-chain-rule1}
-(\mathbf{B}')^{\top}\hat{\mathbf{a}}-\mathbf{c}\in\partial\left(\sum_{j\in\mathbb{N}_d}\lambda_j\|\cdot\|_{1}\circ\mathbf{I}^{'}_{j}\right)(\mathbf{w}).
\end{equation}

We first prove Item (a). If $\mathbf{w}\in\mathbb{R}^{p_d+n-r}$ is a solution of problem \eqref{optimization_problem_under_Bj_3}, then there exist $\hat{\mathbf{a}}\in\mathbb{R}^n$ and $\mathbf{c}\in\mathbb{R}^{p_d+n-r}$ satisfying inclusion relation \eqref{nondiff-Fermat-rule-chain-rule1}, which further leads to  
$$
\mathbf{O}(-\mathbf{O}^{-1}(\mathbf{B}')^{\top}\hat{\mathbf{a}}-\mathbf{O}^{-1}\mathbf{c})\in\partial\left(\sum_{j\in\mathbb{N}_d}\lambda_j\|\cdot\|_{1}\circ\mathbf{I}^{'}_{j}\right)(\mathbf{w})
$$ 
for any $\mathbf{O}\in\mathbb{S}_{+}^{p_d+n-r}$. Relation \eqref{relation_prox_subdiff} ensures the equivalence between the inclusion relation above and equation \eqref{FPPA_nondiff1}. According to relation \eqref{sub_conjugate}, we rewrite the inclusion relation $\hat{\mathbf{a}}\in\partial\bm{\psi}(\mathbf{B}'\mathbf{w})$ as $\mathbf{B}'\mathbf{w}\in\partial\bm{\psi}^*(\hat{\mathbf{a}})$, which further leads to $\mathbf{P}(\mathbf{P}^{-1}\mathbf{B}'\mathbf{w})\in\partial\bm{\psi}^*(\hat{\mathbf{a}})$ for any $\mathbf{P}\in\mathbb{S}_{+}^{n}$. This guaranteed by relation \eqref{relation_prox_subdiff} is equivalent to equation \eqref{FPPA_nondiff2}. Again by relation \eqref{sub_conjugate}, the inclusion relation   $\mathbf{c}\in\partial\iota_{\mathbb{M}}(\mathbf{w})$ can be rewritten as $\mathbf{w}\in\partial\iota_{\mathbb{M}}^*(\mathbf{c})$. Hence, for any $\mathbf{Q}\in\mathbb{S}_{+}^{p_d+n-r}$, we obtain that $\mathbf{Q}(\mathbf{Q}^{-1}\mathbf{w})\in\partial\iota_{\mathbb{M}}^*(\mathbf{c})$ which guaranteed by relation \eqref{relation_prox_subdiff} is equivalent to equation \eqref{FPPA_nondiff3}.

We next verify Item (b). Suppose that vectors $\mathbf{w}\in\mathbb{R}^{p_d+n-r}$, $\hat{\mathbf{a}}\in\mathbb{R}^n$, $\mathbf{c}\in\mathbb{R}^{p_d+n-r}$ and matrices $\mathbf{O}\in\mathbb{S}_{+}^{p_d+n-r}$, $\mathbf{P}\in\mathbb{S}_{+}^{n}$, $\mathbf{Q}\in\mathbb{S}_{+}^{p_d+n-r}$ satisfying equations \eqref{FPPA_nondiff1}, \eqref{FPPA_nondiff2} and \eqref{FPPA_nondiff3}. As pointed out in the proof of Item (a), equations \eqref{FPPA_nondiff2} and \eqref{FPPA_nondiff3} are equivalent to inclusion relations $\hat{\mathbf{a}}\in\partial\bm{\psi} (\mathbf{B}'\mathbf{w})$ and $\mathbf{c}\in\partial\iota_{\mathbb{M}}(\mathbf{w})$, respectively. Moreover, equation \eqref{FPPA_nondiff1} are equivalent to inclusion relation \eqref{nondiff-Fermat-rule-chain-rule1}. Consequently, we conclude that $\mathbf{w}$ is a solution of problem \eqref{optimization_problem_under_Bj_3}.
\end{proof}

Proposition \ref{FPPA_deformation} provides the fixed-point equation formulations not only for the solution $\mathbf{w}$ of problem \eqref{optimization_problem_under_Bj_3} but also for two auxiliary vectors $\hat{\mathbf{a}}$ and $\mathbf{c}$. We now derive the correspondence between these vectors $\mathbf{w}$, $\hat{\mathbf{a}}$ and $\mathbf{c}$ and the target vectors $\mathbf{u}^*$, $\mathbf{a}$ and $\mathbf{b}$. 
\begin{theorem}\label{a-b}
Suppose that $\bm{\psi}:\mathbb{R}^n\to\mathbb{R}_+$ is a convex function and for each $j\in\mathbb{N}_d$, $\mathbf{B}_{j}$ is an $m_j\times n$ matrix. Let $\mathbf{B}$ be defined as in \eqref{block-matrix-B}. Suppose that $\mathbf{w}:=\scalebox{0.8}{$\begin{bmatrix}
\mathbf{z}\\
\mathbf{v}\end{bmatrix}$}\in\mathbb{R}^{p_d+n-r}$ with $\mathbf{v}\in\mathbb{R}^{n-r}$, $\mathbf{z}\in\mathbb{R}^{p_d}$ and for each $j\in \mathbb{N}_d,$ $\mathbf{z}_j=\sum_{i\in\mathbb{N}_{l_j}}z_{p_{j-1}+k_i}^*\mathbf{e}_{m_j,k_i}\in \Omega_{m_j,l_j}$ for some $l_{j}\in\mathbb{Z}_{m_j+1}$, and in addition $\hat{\mathbf{a}}\in\mathbb{R}^n$,  $\mathbf{c}:=[c_j:j\in\mathbb{N}_{p_d+n-r}]\in\mathbb{R}^{p_d+n-r}$. If vectors $\mathbf{w}$, $\hat{\mathbf{a}}$ and $\mathbf{c}$ satisfy equations \eqref{FPPA_nondiff1}, \eqref{FPPA_nondiff2} and \eqref{FPPA_nondiff3} for some matrices $\mathbf{O}\in\mathbb{S}_{+}^{p_d+n-r}$, $\mathbf{P}\in\mathbb{S}_{+}^{n}$, $\mathbf{Q}\in\mathbb{S}_{+}^{p_d+n-r}$, then $\mathbf{u}^*:=\mathbf{B}'\mathbf{w}$ is a solution of problem \eqref{optimization_problem_under_Bj} with $\lambda_j$, $j\in\mathbb{N}_d$ and for each $j\in\mathbb{N}_d$, $\mathbf{u}^*$ has sparsity of level $l_{j}$ under $\mathbf{B}_{j}$. Moreover, $\mathbf{a}:=\hat{\mathbf{a}}\in\partial\bm{\psi}(\mathbf{u}^*)$ and $\mathbf{b}:=[c_j:j\in\mathbb{N}_{p_d}]\in\mathcal{N}(\mathbf{B}^{\top})$ satisfy \eqref{sparsity-equi-mini-1}, \eqref{sparsity-equi-mini-2} and \eqref{sparsity-equi-mini-3}. 
\end{theorem}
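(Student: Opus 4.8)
The plan is to reverse-engineer the three fixed-point equations back into subdifferential inclusions, exactly as was done in the proof of Proposition \ref{FPPA_deformation}, and then match those inclusions against the characterization established in Lemma \ref{sparsity-equi-mini}. Concretely, applying relation \eqref{relation_prox_subdiff} together with the conjugation identity \eqref{sub_conjugate} to equations \eqref{FPPA_nondiff2} and \eqref{FPPA_nondiff3} yields $\hat{\mathbf{a}}\in\partial\bm{\psi}(\mathbf{B}'\mathbf{w})$ and $\mathbf{c}\in\partial\iota_{\mathbb{M}}(\mathbf{w})$, while equation \eqref{FPPA_nondiff1} is equivalent to the inclusion \eqref{nondiff-Fermat-rule-chain-rule1}. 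The second of these has two immediate consequences: first, since $\partial\iota_{\mathbb{M}}(\mathbf{w})$ is nonempty only when $\mathbf{w}\in\mathbb{M}$, we obtain $\mathbf{w}\in\mathbb{M}$; second, using $\partial\iota_{\mathbb{M}}(\mathbf{w})=\mathcal{N}(\mathbf{B}^{\top})\times\{\mathbf{0}\}$ (computed in the proof of Lemma \ref{sparsity-equi-mini}), the vector $\mathbf{c}$ splits as $\begin{bmatrix}\mathbf{b}\\ \mathbf{0}\end{bmatrix}$ with $\mathbf{b}:=[c_j:j\in\mathbb{N}_{p_d}]\in\mathcal{N}(\mathbf{B}^{\top})$ and the trailing $n-r$ entries of $\mathbf{c}$ vanishing. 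This already delivers the claim $\mathbf{b}\in\mathcal{N}(\mathbf{B}^{\top})$.

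Next I would establish that $\mathbf{u}^*:=\mathbf{B}'\mathbf{w}$ solves \eqref{optimization_problem_under_Bj} and carries the prescribed sparsity. Item (b) of Proposition \ref{FPPA_deformation} shows $\mathbf{w}$ solves the reformulated problem \eqref{optimization_problem_under_Bj_3}; since $\mathbf{w}\in\mathbb{M}$ and $\mathcal{B}$ is a bijection from $\mathbb{R}^n$ onto $\mathbb{M}$, the vector $\mathbf{u}^*=\mathbf{B}'\mathbf{w}$ satisfies $\mathcal{B}\mathbf{u}^*=\mathbf{w}$, so Lemma \ref{equi_mini} makes $\mathbf{u}^*$ a solution of \eqref{optimization_problem_under_Bj}. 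For the sparsity level, I would invoke the identities $\mathbf{B}'\mathcal{B}\mathbf{u}=\mathbf{u}$ and $\mathbf{I}'_j(\mathcal{B}\mathbf{u})=(\mathbf{B}\mathbf{u})_j$ from the proof of Lemma \ref{equi_mini}, which give $\mathbf{B}_j\mathbf{u}^*=\mathbf{I}'_j\mathbf{w}=\mathbf{z}_j$; since $\mathbf{z}_j\in\Omega_{m_j,l_j}$ by hypothesis, $\mathbf{u}^*$ has sparsity of level $l_j$ under $\mathbf{B}_j$. The same identity $\mathbf{B}'\mathbf{w}=\mathbf{u}^*$ upgrades $\hat{\mathbf{a}}\in\partial\bm{\psi}(\mathbf{B}'\mathbf{w})$ to $\mathbf{a}:=\hat{\mathbf{a}}\in\partial\bm{\psi}(\mathbf{u}^*)$.

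The final and most delicate step is to verify that these specific $\mathbf{a}$ and $\mathbf{b}$---not merely some existentially guaranteed pair from Theorem \ref{sparsity_original}---satisfy \eqref{sparsity-equi-mini-1}, \eqref{sparsity-equi-mini-2} and \eqref{sparsity-equi-mini-3}. Here I would substitute $\mathbf{c}=\begin{bmatrix}\mathbf{b}\\ \mathbf{0}\end{bmatrix}$ into the inclusion \eqref{nondiff-Fermat-rule-chain-rule1} and apply the chain rule \eqref{chain-rule} to its right-hand side, obtaining $\partial(\sum_{j\in\mathbb{N}_d}\lambda_j\|\cdot\|_1\circ\mathbf{I}'_j)(\mathbf{w})=\sum_{j\in\mathbb{N}_d}\lambda_j(\mathbf{I}'_j)^{\top}\partial\|\cdot\|_1(\mathbf{z}_j)$. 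The resulting relation is literally inclusion \eqref{Fermat-rule-chain-rule-1} with $\mathbf{z}^*=\mathbf{z}$, so I can reuse verbatim the remainder of the proof of Lemma \ref{sparsity-equi-mini}: unfolding the explicit componentwise description of $\partial\|\cdot\|_1(\mathbf{z}_j)$ yields exactly \eqref{sparsity-equi-mini-1} on the tail block, \eqref{sparsity-equi-mini-2} on the support indices $k_i$, and \eqref{sparsity-equi-mini-3} on the off-support indices.

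I expect the main obstacle to be conceptual rather than computational: one must take care that the argument produces the concrete $\hat{\mathbf{a}}$ and $\mathbf{c}$ furnished by the fixed-point equations, so the existential characterization of Theorem \ref{sparsity_original} cannot be cited directly; instead the equivalences inside Lemma \ref{sparsity-equi-mini} must be traced in the forward direction for this particular pair. The one bookkeeping point requiring attention is confirming that the trailing $n-r$ coordinates of $\mathbf{c}$ indeed vanish, so that $\mathbf{c}$ genuinely reduces to $\begin{bmatrix}\mathbf{b}\\ \mathbf{0}\end{bmatrix}$ and the block structure of \eqref{Fermat-rule-chain-rule-1} is matched exactly.
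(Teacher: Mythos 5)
Your proposal is correct and follows essentially the same route as the paper's proof: invoking Item (b) of Proposition \ref{FPPA_deformation} and Lemma \ref{equi_mini} to identify $\mathbf{u}^*=\mathbf{B}'\mathbf{w}$ as a solution with $\mathbf{B}_j\mathbf{u}^*=\mathbf{z}_j$, then converting the fixed-point equations back to the subdifferential inclusions $\hat{\mathbf{a}}\in\partial\bm{\psi}(\mathbf{B}'\mathbf{w})$, $\mathbf{c}\in\partial\iota_{\mathbb{M}}(\mathbf{w})=\mathcal{N}(\mathbf{B}^{\top})\times\{\mathbf{0}\}$ and the inclusion \eqref{nondiff-Fermat-rule-chain-rule1}, and finally reusing the componentwise analysis from the proof of Lemma \ref{sparsity-equi-mini} to obtain \eqref{sparsity-equi-mini-1}--\eqref{sparsity-equi-mini-3}. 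Your added care about the vanishing of the trailing $n-r$ entries of $\mathbf{c}$ and about tracing the argument for the concrete pair $(\mathbf{a},\mathbf{b})$ rather than an existential one is a welcome clarification but does not change the substance of the argument.
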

\begin{proof}
Item (b) of Proposition \ref{FPPA_deformation} ensures that if $\mathbf{w}$, $\hat{\mathbf{a}}$ and $\mathbf{c}$ satisfy equations \eqref{FPPA_nondiff1}, \eqref{FPPA_nondiff2} and \eqref{FPPA_nondiff3} for some matrices $\mathbf{O}\in\mathbb{S}_{+}^{p_d+n-r}$, $\mathbf{P}\in\mathbb{S}_{+}^{n}$, $\mathbf{Q}\in\mathbb{S}_{+}^{p_d+n-r}$, then $\mathbf{w}$ is a solution of problem \eqref{optimization_problem_under_Bj_3} with $\lambda_j$, $j\in\mathbb{N}_d$. According to Lemma \ref{equi_mini}, we get that $\mathbf{u}^*:=\mathcal{B}^{-1}\mathbf{w}$ is a solution of problem \eqref{optimization_problem_under_Bj} with $\lambda_j$, $j\in\mathbb{N}_d$. By definition of mapping $\mathcal{B}$, we get that $\mathbf{u}^*=\mathbf{B}'\mathbf{w}$ and $\mathbf{B}\mathbf{u}^*=\mathbf{z}$. It follows from definition of matrix $\mathbf{B}$ that for each $j\in \mathbb{N}_d,$ $\mathbf{B}_j\mathbf{u}^*=\mathbf{z}_j$, which shows that $\mathbf{u}^*$ has sparsity of level $l_{j}$ under the transform matrix $\mathbf{B}_{j}$. 
 
 It suffices to verify that $\mathbf{a}\in\partial\bm{\psi}(\mathbf{u}^*)$, $\mathbf{b}\in\mathcal{N}(\mathbf{B}^{\top})$ and there hold \eqref{sparsity-equi-mini-1}, \eqref{sparsity-equi-mini-2} and \eqref{sparsity-equi-mini-3}. As pointed out in the proof of Item (a) of Proposition \ref{FPPA_deformation}, equations \eqref{FPPA_nondiff2} and \eqref{FPPA_nondiff3} are equivalent to inclusion relations $\hat{\mathbf{a}}\in\partial\bm{\psi} (\mathbf{B}'\mathbf{w})$ and $\mathbf{c}\in\partial\iota_{\mathbb{M}}(\mathbf{w})$, respectively. By noting that $\mathbf{u}^*=\mathbf{B}'\mathbf{w}$ and $\mathbf{a}:=\hat{\mathbf{a}}$, we get that $\mathbf{a}\in\partial\bm{\psi} (\mathbf{u}^*)$. Recalling that 
 $$
 \partial\iota_{\mathbb{M}}(\mathbf{w})=\mathcal{N}(\mathbf{B}^{\top})\times\{\mathbf{0}\}
 $$ 
 leads to $\mathbf{b}\in\mathcal{N}(\mathbf{B}^{\top})$. Note that equation \eqref{FPPA_nondiff1} are equivalent to inclusion relation \eqref{nondiff-Fermat-rule-chain-rule1}. The latter holds if and only if inclusion relation \eqref{Fermat-rule-chain-rule-1} holds. As pointed out in the proof of Lemma \ref{sparsity-equi-mini}, inclusion relation \eqref{Fermat-rule-chain-rule-1} yields that $\mathbf{a}$ and $\mathbf{b}$ satisfy \eqref{sparsity-equi-mini-1}, \eqref{sparsity-equi-mini-2} and \eqref{sparsity-equi-mini-3}. 
\end{proof}

According to Theorem \ref{a-b}, the solution $\mathbf{u}^*$ of problem \eqref{optimization_problem_under_Bj} and the vectors $\mathbf{a}$, $\mathbf{b}$ satisfying \eqref{sparsity-equi-mini-1}, \eqref{sparsity-equi-mini-2} and \eqref{sparsity-equi-mini-3} can be obtained by solving the system of fixed-point equations \eqref{FPPA_nondiff1}, \eqref{FPPA_nondiff2} and \eqref{FPPA_nondiff3}. These three fixed-point equations are coupled
together and they have to be solved simultaneously by iteration. It is convenient to write equations \eqref{FPPA_nondiff1}, \eqref{FPPA_nondiff2} and \eqref{FPPA_nondiff3} in a compact form. To this end, we utilize the three column vectors $\mathbf{w}$, $\hat{\mathbf{a}}$ and $\mathbf{c}$ to form a block column vector $\bm{\tau}\in \mathbb{R}^{2p_d+3n-2r}$ having $\mathbf{w}$, $\hat{\mathbf{a}}$ and $\mathbf{c}$ as its three blocks. That is, $\bm{\tau}^\top:=[\mathbf{w}^\top,
\hat{\mathbf{a}}^\top,
\mathbf{c}^\top]$.
By integrating together the three proximity operators involved in equations \eqref{FPPA_nondiff1}, \eqref{FPPA_nondiff2} and \eqref{FPPA_nondiff3}, we introduce an operator from $\mathbb{R}^{2p_d+3n-2r}$ to itself by
\begin{equation*}\label{proximity_operator_T}
\mathcal{P}:= 
\begin{bmatrix}
\mathrm{prox}_{\sum_{j\in\mathbb{N}_d}\lambda_j\|\cdot\|_{1}\circ\mathbf{I}^{'}_{j},\mathbf{O}}\\
\mathrm{prox}_{\bm{\psi}^*,\mathbf{P}}\\
\mathrm{prox}_{\iota_{\mathbb{M}}^*,\mathbf{Q}}
\end{bmatrix}.
\end{equation*}
We also define a block matrix by
\begin{equation*}\label{matrix_E}
\mathbf{E}:=\begin{bmatrix}
&\mathbf{I}_{p_d+n-r} &-\mathbf{O}^{-1}(\mathbf{B}')^{\top} &-\mathbf{O}^{-1}\\
&\mathbf{P}^{-1}\mathbf{B}' &\mathbf{I}_{n} 
&\mathbf{0}&\\
&\mathbf{Q}^{-1} &\mathbf{0}
& \mathbf{I}_{p_d+n-r}  
\end{bmatrix}.    
\end{equation*}
In the above notions, we rewrite equations \eqref{FPPA_nondiff1}, \eqref{FPPA_nondiff2} and \eqref{FPPA_nondiff3} in the following compact form
\begin{equation}\label{fixed-point-problem}
\bm{\tau} =(\mathcal{P}\circ \mathbf{E})(\bm{\tau}). 
\end{equation}

Since equations \eqref{FPPA_nondiff1}, \eqref{FPPA_nondiff2} and \eqref{FPPA_nondiff3} are represented in the compact form \eqref{fixed-point-problem}, one may define the Picard iteration based on \eqref{fixed-point-problem} to solve the fixed-point $\bm{\tau}$ of the operator $\mathcal{P}\circ \mathbf{E}$, that is
$$
\bm{\tau}^{k+1} = (\mathcal{P}\circ \mathbf{E})(\bm{\tau}^{k}),\ k=0,1,\cdots
$$
When it converges, the Picard sequence $\bm{\tau}^{k}$, $k=0,1,\cdots$, generated by the Picard iteration above,
converges to a fixed-point of the operator $\mathcal{P}\circ \mathbf{E}$. It is known \cite{Byrne2003} that the convergence of the Picard sequence requires the firmly non-expansiveness of the operator $\mathcal{P}\circ \mathbf{E}$. However, by arguments similar to those used in the proof of  Lemmas 3.1 and 3.2 of \cite{li2015multi}, we can prove that the operator  $\mathcal{P}\circ \mathbf{E}$ is not firmly non-expansive. We need to reformulate the fixed-point equation \eqref{fixed-point-problem} by appropriately split the matrix $\mathbf{E}$ guided
by the theory of the non-expansive map.

We describe the split of $\mathbf{E}$ as follows. Set 
$$
\mathbf{R}:=\text{diag}(\mathbf{O},\mathbf{P},\mathbf{Q}).
$$
By  introducing three $(2p_d+3n-2r)\times(2p_d+3n-2r)$ matrices $\mathbf{M}_{0},\mathbf{M}_{1},\mathbf{M}_{2}$ satisfying $\mathbf{M}_{0}=\mathbf{M}_{1}+\mathbf{M}_{2}$, we split the expansive matrix $\mathbf{E}$ as 
$$
\mathbf{E}
=(\mathbf{E}-\mathbf{R}^{-1}\mathbf{M}_{0})
+\mathbf{R}^{-1}\mathbf{M}_{1}+\mathbf{R}^{-1}\mathbf{M}_{2}.
$$
Accordingly, the fixed-point equation \eqref{fixed-point-problem} can be  rewritten as
$$
\bm{\tau}
=\mathcal{P}((\mathbf{E}-\mathbf{R}^{-1}\mathbf{M}_{0})\bm{\tau}
+\mathbf{R}^{-1}\mathbf{M}_{1}\bm{\tau}
+\mathbf{R}^{-1}\mathbf{M}_{2}\bm{\tau}).
$$
Based upon the above equation, we define a two-step iteration to solve the fixed-point $\bm{\tau}$ of the operator $\mathcal{P}\circ \mathbf{E}$ as
\begin{equation}\label{two-step_iterative}
\bm{\tau}^{k+1}
=\mathcal{P}((\mathbf{E}-\mathbf{R}^{-1}\mathbf{M}_{0})\bm{\tau}^{k+1}
+\mathbf{R}^{-1}\mathbf{M}_{1}\bm{\tau}^{k}
+\mathbf{R}^{-1}\mathbf{M}_{2}\bm{\tau}^{k-1}).    
\end{equation}
For equation \eqref{two-step_iterative} to serve as an effective iterative algorithm, both its solvability and convergence must be carefully examined.  

These two issues may be addressed by choosing appropriate matrices $\mathbf{M}_{0}$, $\mathbf{M}_{1}$, $\mathbf{M}_{2}$.
Specifically, by introducing a real number $\theta$, we choose $\mathbf{M}_{0}$, $\mathbf{M}_{1}$, $\mathbf{M}_{2}$ as
\begin{equation}\label{matrix_M0}
\mathbf{M}_{0}:=\begin{bmatrix}
\mathbf{O} &-(\mathbf{B}')^{\top} &-\mathbf{I}_{p_d+n-r}\\
-\theta\mathbf{B}' &\mathbf{P} 
&\mathbf{0}\\
-\theta\mathbf{I}_{p_d+n-r} &\mathbf{0}
& \mathbf{Q}
\end{bmatrix},    
\end{equation}
\begin{equation}\label{matrix_M1}
\mathbf{M}_{1}:=\begin{bmatrix}
\mathbf{O} &(\theta-2)(\mathbf{B}')^{\top} 
&(\theta-2)\mathbf{I}_{p_d+n-r}\\
-\theta\mathbf{B}' &\mathbf{P} 
&\mathbf{0}\\
-\theta\mathbf{I}_{p_d+n-r} &\mathbf{0}
& \mathbf{Q}
\end{bmatrix},    
\end{equation}
and 
\begin{equation}\label{matrix_M2}
\mathbf{M}_{2}:=\begin{bmatrix}
\mathbf{0} &(1-\theta)(\mathbf{B}')^{\top} 
&(1-\theta)\mathbf{I}_{p_d+n-r}\\
\mathbf{0}  &\mathbf{0}
&\mathbf{0}\\
\mathbf{0} &\mathbf{0} &\mathbf{0}
\end{bmatrix}.    
\end{equation}
It is clear that $\mathbf{M}_{0}=\mathbf{M}_{1}+\mathbf{M}_{2}$. Associated with these matrices, we have that
\begin{equation}\label{matrix_M0_1}
(\mathbf{E}-\mathbf{R}^{-1}\mathbf{M}_{0})
=\begin{bmatrix}
\mathbf{0}&\mathbf{0}&\mathbf{0}\\
(1+\theta)\mathbf{P}^{-1}\mathbf{B}' &\mathbf{0}&\mathbf{0}\\
(1+\theta)\mathbf{Q}^{-1}&\mathbf{0}&\mathbf{0}
\end{bmatrix},
\end{equation}
\begin{equation}\label{matrix_M1_1}
\mathbf{R}^{-1}\mathbf{M}_{1} 
=\begin{bmatrix}
\mathbf{I}_{p_d+n-r}&(\theta-2)\mathbf{O}^{-1}(\mathbf{B}')^{\top}&(\theta-2)\mathbf{O}^{-1}\\
-\theta\mathbf{P}^{-1}\mathbf{B}'&\mathbf{I}_{n}&\mathbf{0}\\
-\theta\mathbf{Q}^{-1}&\mathbf{0}&\mathbf{I}_{p_d+n-r}
\end{bmatrix}
\end{equation}
and 
\begin{equation}\label{matrix_M2_1}
\mathbf{R}^{-1}\mathbf{M}_{2}
=\begin{bmatrix}
\mathbf{0}&(1-\theta )\mathbf{O}^{-1}(\mathbf{B}')^{\top}& (1-\theta )\mathbf{O}^{-1}\\
\mathbf{0}&\mathbf{0}&\mathbf{0}\\
\mathbf{0}&\mathbf{0}&\mathbf{0}
\end{bmatrix}.
\end{equation}
We then express iteration \eqref{two-step_iterative} in terms of the vectors $\mathbf{w}^k$, $\mathbf{a}^k$ and $\mathbf{c}^k$ as
\begin{equation}\label{FPPA_w}
\left\{\begin{array}{l}
\mathbf{w}^{k+1}=\mathrm{prox}_{\sum_{j\in\mathbb{N}_d}\lambda_j\|\cdot\|_{1}\circ\mathbf{I}^{'}_{j},\mathbf{O}}
\big(\mathbf{w}^{k}+\mathbf{O}^{-1}(\mathbf{B}')^{\top}((\theta-2)\hat{\mathbf{a}}^{k}\\
\ \quad\qquad+(1-\theta)\hat{\mathbf{a}}^{k-1})+\mathbf{O}^{-1}((\theta-2)\mathbf{c}^{k}+(1-\theta)\mathbf{c}^{k-1})\big), \\
\ \hat{\mathbf{a}}^{k+1}=\mathrm{prox}_{\bm{\psi}^*,\mathbf{P}}
\big(\hat{\mathbf{a}}^{k}+\mathbf{P}^{-1}\mathbf{B}'(\mathbf{w}^{k+1}+\theta(\mathbf{w}^{k+1}-\mathbf{w}^{k}))\big),\\
\ 
\mathbf{c}^{k+1}=\mathrm{prox}_{\iota_{\mathbb{M}}^*,\mathbf{Q}}\big(\mathbf{c}^{k}+\mathbf{Q}^{-1}(\mathbf{w}^{k+1}+\theta(\mathbf{w}^{k+1}-\mathbf{w}^{k})\big). 
\end{array}\right.
\end{equation}
We first note that since matrix $\mathbf{E}-\mathbf{R}^{-1}\mathbf{M}_{0}$ is strictly block lower triangular, the two-step iteration \eqref{two-step_iterative}, as an implicit scheme in general, reduces to an explicit scheme \eqref{FPPA_w}. We remark that Algorithm \ref{FPPA}, presented in Section \ref{Itera}, implements the fixed-point proximity iteration \eqref{FPPA_w} to solve problem \eqref{optimization_problem_under_Bj_3}.

We next establish the convergence of iteration \eqref{FPPA_w}. To describe sufficient conditions for the convergence, we introduce two block matrices by 
\begin{equation}\label{F-G}
\mathbf{F}:=\begin{bmatrix}
\mathbf{P} &\mathbf{0} \\
\mathbf{0} &\mathbf{Q} 
\end{bmatrix} \ \text{and} \ 
\mathbf{G}:=\begin{bmatrix}
\mathbf{B}' \\
\mathbf{I}_{p_d+n-r} 
\end{bmatrix}.    
\end{equation}
\begin{theorem}\label{convergence_FPPA}
 If matrices $\mathbf{O}\in\mathbb{S}_{+}^{p_d+n-r}$, $\mathbf{P}\in\mathbb{S}_{+}^{n}$, $\mathbf{Q}\in\mathbb{S}_{+}^{p_d+n-r}$ and $\theta\in\mathbb{R}$ satisfy 
\begin{equation}\label{satisfy-M-condition}
|\theta|\big\| 
\mathbf{F}^{-\frac{1}{2}}\mathbf{G}
\mathbf{O}^{-\frac{1}{2}}
\big\|_2<1 
\end{equation}
and 
\begin{equation}\label{satisfy-M-condition-H}
\frac{|1-\theta|\|\mathbf{G}\|_2\max\big\{\|\mathbf{O}^{-1}\|_2,
\left\|\mathbf{F}^{-1}
\right\|_2\big\}}{1-|\theta|\big\| 
\mathbf{F}^{-\frac{1}{2}}\mathbf{G}
\mathbf{O}^{-\frac{1}{2}}
\big\|_2}<\frac{1}{2},   
\end{equation}
then the sequence 
$\bm{\tau}^k:=\scalebox{0.8}{$
\begin{bmatrix}
\mathbf{w}^k\\
\hat{\mathbf{a}}^k\\
\mathbf{c}^k
\end{bmatrix}$}$, $k\in\mathbb{N}$, generated by iteration \eqref{FPPA_w} for any given $\bm{\tau}^0, \bm{\tau}^1\in\mathbb{R}^{2p_d+3n-2r}$, converges to a fixed-point of operator $\mathcal{P}\circ\mathbf{E}$.
\end{theorem}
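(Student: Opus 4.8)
The plan is to adapt the fixed-point proximity framework of \cite{li2015multi} to the three-block operator $\mathcal{P}$ and the particular splitting $\mathbf{E} = (\mathbf{E} - \mathbf{R}^{-1}\mathbf{M}_0) + \mathbf{R}^{-1}\mathbf{M}_1 + \mathbf{R}^{-1}\mathbf{M}_2$ recorded in \eqref{matrix_M0}--\eqref{matrix_M2}. First I would set down two structural facts. Since each block of $\mathcal{P}$ is a proximity operator $\mathrm{prox}_{f,\mathbf{H}}$ with weight $\mathbf{H} \in \{\mathbf{O},\mathbf{P},\mathbf{Q}\}$ positive definite, and since $\mathbf{R} = \mathrm{diag}(\mathbf{O},\mathbf{P},\mathbf{Q})$ assembles these weights block-diagonally, the operator $\mathcal{P}$ is firmly non-expansive with respect to $\langle\cdot,\cdot\rangle_{\mathbf{R}}$, namely
\[
\|\mathcal{P}(\mathbf{x}) - \mathcal{P}(\mathbf{y})\|_{\mathbf{R}}^2 \leq \langle \mathcal{P}(\mathbf{x}) - \mathcal{P}(\mathbf{y}),\, \mathbf{x} - \mathbf{y}\rangle_{\mathbf{R}}, \qquad \mathbf{x},\mathbf{y}\in\mathbb{R}^{2p_d+3n-2r}.
\]
Because problem \eqref{optimization_problem_under_Bj_3} admits a solution, Proposition \ref{FPPA_deformation}(a) provides vectors whose assembly $\bm{\tau}^*$ is a fixed point of $\mathcal{P}\circ\mathbf{E}$; and since $\mathbf{M}_0 = \mathbf{M}_1 + \mathbf{M}_2$, this $\bm{\tau}^*$ satisfies the two-step relation \eqref{two-step_iterative} with every iterate replaced by $\bm{\tau}^*$.

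Next I would derive the fundamental descent inequality. Setting $\mathbf{e}^k := \bm{\tau}^k - \bm{\tau}^*$ and subtracting the fixed-point relation from \eqref{two-step_iterative}, the two arguments of $\mathcal{P}$ differ by $(\mathbf{E} - \mathbf{R}^{-1}\mathbf{M}_0)\mathbf{e}^{k+1} + \mathbf{R}^{-1}\mathbf{M}_1\mathbf{e}^{k} + \mathbf{R}^{-1}\mathbf{M}_2\mathbf{e}^{k-1}$. Inserting this difference into the firm non-expansiveness inequality and clearing $\mathbf{R}$ turns the estimate into a quadratic form in $\mathbf{e}^{k+1},\mathbf{e}^{k},\mathbf{e}^{k-1}$ built from $\mathbf{M}_0,\mathbf{M}_1,\mathbf{M}_2$. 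Substituting the explicit forms \eqref{matrix_M0}--\eqref{matrix_M2} and exploiting the block structure of $\mathbf{G}$ and $\mathbf{F}$ from \eqref{F-G}, I would reorganize this quadratic form into a Lyapunov quantity of the shape
\[
\Phi_k := \|\mathbf{e}^{k+1}\|_{\mathbf{R}}^2 + c\,\|\bm{\tau}^{k+1} - \bm{\tau}^{k}\|_{\mathbf{R}}^2,
\]
for a suitable constant $c>0$, and establish a monotonicity relation $\Phi_k \leq \Phi_{k-1}$ modulo a nonnegative remainder. The coupling among consecutive iterates enters only through the off-diagonal blocks carrying the factors $\theta$ and $1-\theta$, so the resulting cross terms have to be absorbed by Cauchy--Schwarz estimates in the $\mathbf{O}$- and $\mathbf{F}$-weighted norms.

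It is precisely here that conditions \eqref{satisfy-M-condition} and \eqref{satisfy-M-condition-H} are invoked. The first bound, $|\theta|\,\|\mathbf{F}^{-1/2}\mathbf{G}\mathbf{O}^{-1/2}\|_2 < 1$, is exactly what is needed so that the $\theta$-weighted implicit-coupling blocks (carried by $\mathbf{M}_1$) are dominated by the diagonal $\mathbf{R}$-terms, rendering the leading quadratic form positive definite and the scheme both well posed and contractive in the leading variable. The second bound then controls the genuine two-step memory term carried by $\mathbf{M}_2$ (the $(1-\theta)$-weighted blocks), ensuring that the constant $c$ can be chosen so that the remainder in the Lyapunov inequality stays nonnegative. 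From $\Phi_k \leq \Phi_{k-1}$ I would deduce that $\{\bm{\tau}^k\}$ is bounded and asymptotically regular, i.e.\ $\|\bm{\tau}^{k+1}-\bm{\tau}^{k}\|\to 0$; continuity of $\mathcal{P}\circ\mathbf{E}$ then forces every cluster point to be a fixed point, and the Fej\'er-type monotonicity of $\|\mathbf{e}^{k}\|_{\mathbf{R}}$ promotes subsequential convergence to convergence of the whole sequence. The principal obstacle I anticipate lies in the passage from the raw firm-non-expansiveness inequality to the clean Lyapunov relation: after substituting \eqref{matrix_M0}--\eqref{matrix_M2} one must track the $\mathbf{G}$-coupling and the parameter $\theta$ with care and verify that the two scalar conditions are exactly the thresholds at which the relevant block matrix becomes positive semidefinite, so that the Cauchy--Schwarz estimates close with no slack to spare.
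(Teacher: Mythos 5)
Your plan is sound and would yield the theorem, but it is organized differently from the paper's proof. The paper does not re-derive the descent/Lyapunov inequality: it packages the argument through two notions imported from \cite{li2015multi}, namely \emph{weak firm non-expansiveness} of the two-step operator $\mathcal{T}_{\mathcal{M}}$ with respect to $\mathcal{M}=\{\mathbf{M}_0,\mathbf{M}_1,\mathbf{M}_2\}$ and \emph{Condition-}$\mathbf{M}$ on that matrix triple, and then invokes the abstract convergence result (Lemma \ref{converges_weakly-T}) once Propositions \ref{TM-continuous} and \ref{satisfy-M-condition-proposition} verify its hypotheses. What you propose is essentially to unroll the proof of that abstract lemma inline: your ``leading quadratic form positive definite'' is the paper's Item (ii) of Condition-$\mathbf{M}$ ($\mathbf{H}:=\mathbf{M}_0+\mathbf{M}_2\in\mathbb{S}_+$, shown equivalent to \eqref{satisfy-M-condition} via Lemma 6.2 of \cite{li2015multi}), and your control of the two-step memory term is Item (iii), $\|\mathbf{H}^{-1/2}\mathbf{M}_2\mathbf{H}^{-1/2}\|_2<1/2$, which is where \eqref{satisfy-M-condition-H} enters. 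Your route is more self-contained and makes the mechanism visible; the paper's route is shorter and delegates the bookkeeping to the cited reference. Two points where your sketch should be sharpened if you execute it. First, when you ``clear $\mathbf{R}$'' after applying firm non-expansiveness, the term $\langle \mathbf{e}^{k+1},(\mathbf{R}\mathbf{E}-\mathbf{R})\mathbf{e}^{k+1}\rangle$ survives and is killed only because $\mathbf{R}\mathbf{E}-\mathbf{R}=\mathbf{S}_{\mathbf{B}}$ is skew-symmetric; this cancellation is the linchpin of the reduction to a quadratic form in $\mathbf{M}_0,\mathbf{M}_1,\mathbf{M}_2$ and deserves to be stated explicitly. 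Second, the natural weighting for the Lyapunov quantity is $\|\cdot\|_{\mathbf{H}}$ with $\mathbf{H}=\mathbf{M}_0+\mathbf{M}_2$, not $\|\cdot\|_{\mathbf{R}}$ as you wrote; with the $\mathbf{R}$-norm the cross terms generated by the $\theta$- and $(1-\theta)$-blocks do not close under the stated conditions, whereas in the $\mathbf{H}$-norm the two scalar conditions are exactly the thresholds you anticipate. A small credit: you explicitly source the existence of a fixed point from Proposition \ref{FPPA_deformation}(a), a hypothesis of the abstract lemma that the paper's proof of the theorem leaves tacit.
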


Theorem \ref{convergence_FPPA} demonstrates that the convergence of iteration \eqref{FPPA_w} depends on the selection of the matrices $\mathbf{O}$, $\mathbf{P}$, $\mathbf{Q}$ and the real number $\theta$. A practical choice is to take
\begin{equation}\label{OPQtheta}
\mathbf{O}:=\frac{1}{\alpha}\mathbf{I}_{p_d+n-r},\ \mathbf{P}:=\frac{1}{\rho}\mathbf{I}_{n},\ \mathbf{Q}:=\frac{1}{\beta}\mathbf{I}_{p_d+n-r},\ \theta:=1,
\end{equation}
where $\alpha$, $\rho$ and $\beta$ are positive real numbers. With this selection, the convergence conditions \eqref{satisfy-M-condition} and \eqref{satisfy-M-condition-H} reduce to 
\begin{equation}\label{OPQtheta-convergence-condition}
\left\|\begin{bmatrix}
\sqrt{\rho}\mathbf{B}'\\
\sqrt{\beta}\mathbf{I}_{p_d+n-r}
\end{bmatrix}\right\|_2<1/\sqrt{\alpha}.
\end{equation}
Moreover, the positive constants $\alpha$, $\rho$ and $\beta$ may be appropriately chosen to improve the convergence rate. 

To end this section, we provide a complete proof for Theorem \ref{convergence_FPPA}. We start with reviewing the notion of weakly firmly non-expansive operators introduced in \cite{li2015multi}. An operator $T:\mathbb{R}^{2s}\to \mathbb{R}^{s}$ is called weakly firmly non-expansive with respect to a set $\mathcal{M}:=\{\mathbf{M}_{0},\mathbf{M}_{1},\mathbf{M}_{2}\}$ of $s\times s $ matrices if for any $(\mathbf{s}^{i},\mathbf{z}^{i},\bm{\omega}^{i})\in\mathbb{R}^s\times\mathbb{R}^s\times\mathbb{R}^s$ satisfying $\bm{\omega}^{i}=T(\mathbf{s}^{i},\mathbf{z}^{i})$ for $i=1,2,$ there holds
\begin{equation}\label{weakly-firmly-non-expansive}
\big\langle\bm{\omega}^{2}-\bm{\omega}^{1}, \mathbf{M}_{0}(\bm{\omega}^{2}-\bm{\omega}^{1})
\big\rangle
\leq\big\langle\bm{\omega}^{2}-\bm{\omega}^{1},\mathbf{M}_{1}(\mathbf{s}^{2}-\mathbf{s}^{1})+\mathbf{M}_{2}(\mathbf{z}^{2}-\mathbf{z}^{1})\big\rangle.
\end{equation}
The graph ${\rm gra}(T)$ of operator $T$ is defined by 
$$
{\rm gra}(T):=\{(\mathbf{s},\mathbf{z},\bm{\omega}):(\mathbf{s},\mathbf{z},\bm{\omega})\in\mathbb{R}^s\times\mathbb{R}^s\times\mathbb{R}^s,\bm{\omega}=T(\mathbf{s},\mathbf{z})\}.
$$
We say the graph ${\rm gra}(T)$ of $T$
is a closed set if for any sequence $\{(\mathbf{s}^k,\mathbf{z}^k,\bm{\omega}^k)\in{\rm gra}(T):k\in\mathbb{N}\}$ converging
to $(\mathbf{s},\mathbf{z},\bm{\omega})$, there holds $(\mathbf{s},\mathbf{z},\bm{\omega})\in{\rm gra}(T)$.
Following \cite{li2015multi}, we also need the notion of Condition-$\mathbf{M}$. 
We say a set $\mathcal{M}:=\{\mathbf{M}_{0},\mathbf{M}_{1},\mathbf{M}_{2}\}$ of $s\times s $ matrices satisfies Condition-$\mathbf{M}$, if the following three hypotheses are satisfied:
\begin{itemize}[leftmargin=4em, labelwidth=2em, labelsep*=0.5em, align=left]
 \item[\textbf{(i)}]
$\mathbf{M}_{0}=\mathbf{M}_{1}+\mathbf{M}_{2}$;

\item[\textbf{(ii)}] $\mathbf{H}:=\mathbf{M}_{0}+\mathbf{M}_{2}$ is in $\mathbb{S}^s_{+}$;

 \item[\textbf{(iii)}] $\big\|\mathbf{H}^{-1/2}\mathbf{M}_{2}\mathbf{H}^{-1/2}\big\|_2<1/2$. 
\end{itemize}


The next result derived in \cite{li2015multi} demonstrates that if an operator $T$ with a closed graph is weakly firmly non-expansive with respect to a set $\mathcal{M}:=\{\mathbf{M}_{0},\mathbf{M}_{1},\mathbf{M}_{2}\}$ of $s\times s $ matrices satisfying Condition-$\mathbf{M}$, then a
sequence $\{\bm{\omega}^{k}:k\in\mathbb{N}\}$ generated by 
\begin{equation}\label{fixed-point_T}
\bm{\omega}^{k+1}=T(\bm{\omega}^{k},\bm{\omega}^{k-1})
\end{equation}
converges to a fixed point $\bm{\omega}$ of $T$, that is, $\bm{\omega}=T(\bm{\omega},\bm{\omega})$.

\begin{lemma}\label{converges_weakly-T}
Suppose that a set $\mathcal{M}:=\{\mathbf{M}_{0},\mathbf{M}_{1},\mathbf{M}_{2}\}$ of $s\times s $ matrices  satisfies Condition-$\mathbf{M}$,  the operator $T:\mathbb{R}^{2s}\to \mathbb{R}^{s}$ is weakly firmly non-expansive with respect to $\mathcal{M}$,  the set of fixed-points of $T$ is nonempty and $\mathrm{dom}(T)=\mathbb{R}^{2s}$. If the sequence $\{\bm{\omega}^{k}:k\in\mathbb{N}\}$ is generated by 
\eqref{fixed-point_T} for any given $\bm{\omega}^{0}$, $\bm{\omega}^{1}\in\mathbb{R}^{s}$, then $\{\bm{\omega}^{k}:k\in\mathbb{N}\}$ converges. Moreover, if the graph ${\rm gra}(T)$ of $T$
is closed, then $\{\bm{\omega}^{k}:k\in\mathbb{N}\}$ converges to a fixed-point of $T$.    
\end{lemma}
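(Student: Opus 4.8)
The plan is to run a Lyapunov (energy) argument in the weighted geometry induced by $\mathbf{H}:=\mathbf{M}_{0}+\mathbf{M}_{2}$, exploiting the three parts of Condition-$\mathbf{M}$ in turn. First I fix a fixed point $\bm{\omega}^{*}$ of $T$ (the fixed-point set is nonempty by hypothesis, and $\mathrm{dom}(T)=\mathbb{R}^{2s}$ guarantees both that the iteration \eqref{fixed-point_T} is well defined for every $k$ and that the constant triple $(\bm{\omega}^{*},\bm{\omega}^{*},\bm{\omega}^{*})$ is admissible in \eqref{weakly-firmly-non-expansive}). Writing $\mathbf{e}^{k}:=\bm{\omega}^{k}-\bm{\omega}^{*}$ and $\bm{\delta}^{k}:=\bm{\omega}^{k}-\bm{\omega}^{k-1}=\mathbf{e}^{k}-\mathbf{e}^{k-1}$, I apply \eqref{weakly-firmly-non-expansive} to the iteration triple $(\bm{\omega}^{k},\bm{\omega}^{k-1},\bm{\omega}^{k+1})$ and the constant triple above to obtain the fundamental inequality $\langle\mathbf{e}^{k+1},\mathbf{M}_{0}\mathbf{e}^{k+1}\rangle\le\langle\mathbf{e}^{k+1},\mathbf{M}_{1}\mathbf{e}^{k}+\mathbf{M}_{2}\mathbf{e}^{k-1}\rangle$.

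Next I would reduce this to a one-step descent estimate. Using hypothesis (i), $\mathbf{M}_{0}=\mathbf{M}_{1}+\mathbf{M}_{2}$, the right-hand side rewrites in terms of increments, giving $\langle\mathbf{e}^{k+1},\mathbf{M}_{0}\bm{\delta}^{k+1}+\mathbf{M}_{2}\bm{\delta}^{k}\rangle\le0$; substituting $\mathbf{M}_{0}=\mathbf{H}-\mathbf{M}_{2}$ from hypothesis (ii) and using the symmetry of $\mathbf{H}$ together with the polarization identity $\langle\mathbf{e}^{k+1},\mathbf{H}\bm{\delta}^{k+1}\rangle=\tfrac12(\|\mathbf{e}^{k+1}\|_{\mathbf{H}}^{2}-\|\mathbf{e}^{k}\|_{\mathbf{H}}^{2}+\|\bm{\delta}^{k+1}\|_{\mathbf{H}}^{2})$ yields $\langle\mathbf{e}^{k+1},\mathbf{H}\bm{\delta}^{k+1}\rangle\le\langle\mathbf{e}^{k+1},\mathbf{M}_{2}(\bm{\delta}^{k+1}-\bm{\delta}^{k})\rangle$. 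I then define the energy $\Psi_{k}:=\tfrac12\|\mathbf{e}^{k}\|_{\mathbf{H}}^{2}-\langle\mathbf{e}^{k},\mathbf{M}_{2}\bm{\delta}^{k}\rangle$ and the augmented functional $\Phi_{k}:=\Psi_{k}+\tfrac{c}{2}\|\bm{\delta}^{k}\|_{\mathbf{H}}^{2}$, where $c:=\|\mathbf{H}^{-1/2}\mathbf{M}_{2}\mathbf{H}^{-1/2}\|_{2}$.

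The decisive step uses hypothesis (iii), namely $c<1/2$. Bounding every cross term $\langle\,\cdot\,,\mathbf{M}_{2}\,\cdot\,\rangle$ by $c\,\|\cdot\|_{\mathbf{H}}\|\cdot\|_{\mathbf{H}}$ (the operator-norm bound read in the $\mathbf{H}$-geometry) and applying Young's inequality, I expect to reach $\Phi_{k+1}\le\Phi_{k}-\tfrac{1-2c}{2}\|\bm{\delta}^{k+1}\|_{\mathbf{H}}^{2}$, while the same bound gives $\Phi_{k}\ge\tfrac{1-c}{2}\|\mathbf{e}^{k}\|_{\mathbf{H}}^{2}\ge0$. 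Hence $\{\Phi_{k}\}$ is non-increasing and bounded below, so it converges; telescoping the decrease forces $\sum_{k}\|\bm{\delta}^{k+1}\|_{\mathbf{H}}^{2}<\infty$, whence $\bm{\delta}^{k}\to\mathbf{0}$ (asymptotic regularity), and the lower bound forces $\{\mathbf{e}^{k}\}$, hence $\{\bm{\omega}^{k}\}$, to be bounded. Because $\bm{\delta}^{k}\to\mathbf{0}$ and $\{\mathbf{e}^{k}\}$ is bounded, both correction terms in $\Phi_{k}-\tfrac12\|\mathbf{e}^{k}\|_{\mathbf{H}}^{2}$ vanish as $k\to\infty$, so $\{\|\bm{\omega}^{k}-\bm{\omega}^{*}\|_{\mathbf{H}}\}$ converges; since $\bm{\omega}^{*}$ was an arbitrary fixed point, this quasi-Fej\'er monotonicity is what underlies the first conclusion.

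Finally, for convergence to a fixed point I would invoke the closed-graph hypothesis. Boundedness yields a cluster point $\bar{\bm{\omega}}=\lim_{j}\bm{\omega}^{k_{j}}$; since $\bm{\delta}^{k}\to\mathbf{0}$ we also have $\bm{\omega}^{k_{j}-1},\bm{\omega}^{k_{j}+1}\to\bar{\bm{\omega}}$, and each triple $(\bm{\omega}^{k_{j}},\bm{\omega}^{k_{j}-1},\bm{\omega}^{k_{j}+1})$ lies in $\mathrm{gra}(T)$, so closedness of $\mathrm{gra}(T)$ gives $(\bar{\bm{\omega}},\bar{\bm{\omega}},\bar{\bm{\omega}})\in\mathrm{gra}(T)$, i.e.\ $\bar{\bm{\omega}}$ is a fixed point. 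Applying the quasi-Fej\'er property with $\bm{\omega}^{*}=\bar{\bm{\omega}}$, the sequence $\{\|\bm{\omega}^{k}-\bar{\bm{\omega}}\|_{\mathbf{H}}\}$ converges, and along the subsequence its limit is $0$; by the standard (finite-dimensional) Opial-type argument the whole sequence converges to $\bar{\bm{\omega}}$. The main obstacle I anticipate is the second step: identifying the correct Lyapunov functional and extracting the strict descent factor $1-2c>0$ solely from the operator-norm condition (iii), since $\mathbf{M}_{0},\mathbf{M}_{1},\mathbf{M}_{2}$ need not be individually symmetric and only $\mathbf{H}$ is available as a metric, so the cross terms and the Young splitting must be arranged to close with the sharp constant $1/2$.
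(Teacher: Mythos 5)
The paper does not prove this lemma at all: it is imported verbatim from \cite{li2015multi} (``The next result derived in \cite{li2015multi} demonstrates that\dots''), so your argument is necessarily an independent, self-contained route rather than a variant of anything in the text. On its merits, the core of your Lyapunov construction is sound and, for the conclusion the paper actually uses, complete in outline. Applying \eqref{weakly-firmly-non-expansive} to the triples $(\bm{\omega}^{k},\bm{\omega}^{k-1},\bm{\omega}^{k+1})$ and $(\bm{\omega}^{*},\bm{\omega}^{*},\bm{\omega}^{*})$ is legitimate because $\bm{\omega}^{*}=T(\bm{\omega}^{*},\bm{\omega}^{*})$ places the constant triple in ${\rm gra}(T)$; the reduction via $\mathbf{M}_{0}=\mathbf{M}_{1}+\mathbf{M}_{2}$ and $\mathbf{M}_{0}=\mathbf{H}-\mathbf{M}_{2}$, the polarization identity (valid since only $\mathbf{H}$ need be symmetric), the bound $|\langle\mathbf{x},\mathbf{M}_{2}\mathbf{y}\rangle|\le c\,\|\mathbf{x}\|_{\mathbf{H}}\|\mathbf{y}\|_{\mathbf{H}}$, and the Young splitting do combine to give $\Phi_{k+1}\le\Phi_{k}-\tfrac{1-2c}{2}\|\bm{\delta}^{k+1}\|_{\mathbf{H}}^{2}$ and $\Phi_{k}\ge\tfrac{1-c}{2}\|\mathbf{e}^{k}\|_{\mathbf{H}}^{2}$; I checked these computations and they close with the sharp constant you anticipated. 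The closed-graph endgame (cluster point is a fixed point, then Fej\'er-type monotonicity with respect to that point forces the whole sequence to converge to it) is the standard finite-dimensional Opial argument and is correct.

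The genuine gap is in the \emph{first} conclusion of the lemma, which asserts that $\{\bm{\omega}^{k}\}$ converges \emph{without} assuming ${\rm gra}(T)$ is closed. What your argument delivers is boundedness, $\bm{\delta}^{k}\to\mathbf{0}$, and convergence of $\|\bm{\omega}^{k}-\bm{\omega}^{*}\|_{\mathbf{H}}$ for every fixed point $\bm{\omega}^{*}$. These three facts do not imply convergence of the sequence itself: a bounded sequence with vanishing increments whose distance to a given set of points stabilizes can still fail to converge (think of a point drifting slowly around a sphere centered at the fixed point), and without closedness of the graph you cannot identify cluster points as fixed points, which is exactly the ingredient the Opial argument needs. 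So your phrase ``this quasi-Fej\'er monotonicity is what underlies the first conclusion'' papers over a real missing step; the proof in \cite{li2015multi} must extract convergence of $\{\bm{\omega}^{k}\}$ by a different mechanism (e.g.\ a Cauchy estimate on the iterates). In the context of this paper the omission is harmless, since Proposition \ref{TM-continuous} supplies the closed graph and only the second conclusion is invoked, but as a proof of the lemma as stated your argument is incomplete on that first assertion.
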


We establish below the convergence of iteration \eqref{FPPA_w} by leveraging Lemma \ref{converges_weakly-T}. For this purpose, we construct a new operator from the operator $\mathcal{P}\circ\mathbf{E}$. Let $\mathbf{M}_{0}$, $\mathbf{M}_{1}$, $\mathbf{M}_{2}$ be $(2p_d+3n-2r)\times(2p_d+3n-2r)$ matrices defined by \eqref{matrix_M0}, \eqref{matrix_M1} and \eqref{matrix_M2}, respectively. Associated with the set  $\mathcal{M}:=\{\mathbf{M}_{0},\mathbf{M}_{1},\mathbf{M}_{2}\}$, we define an operator  $\mathcal{T}_{\mathcal{M}}:\mathbb{R}^{2p_d+3n-2r}\times\mathbb{R}^{2p_d+3n-2r}\to\mathbb{R}^{2p_d+3n-2r}$ for any  $(\mathbf{s},\mathbf{z})\in\mathbb{R}^{2p_d+3n-2r}\times\mathbb{R}^{2p_d+3n-2r}$ by $\bm{\omega}:=\mathcal{T}_{\mathcal{M}}(\mathbf{s},\mathbf{z})$ with $\bm{\omega}$ satisfying 
\begin{equation}\label{iterative_TMtu}
\bm{\omega}
=\mathcal{P}((\mathbf{E}-\mathbf{R}^{-1}\mathbf{M}_{0})\bm{\omega}
+\mathbf{R}^{-1}\mathbf{M}_{1}\mathbf{s}
+\mathbf{R}^{-1}\mathbf{M}_{2}\mathbf{z}).    
\end{equation}
We claim that the operator $\mathcal{T}_{\mathcal{M}}$ is well-defined. To see this, we decompose any  $\mathbf{x}\in\mathbb{R}^{2p_d+3n-2r}$ as $\mathbf{x}:=\scalebox{0.7}{$\begin{bmatrix}
\mathbf{x}_1\\
\mathbf{x}_2\\
\mathbf{x}_3
\end{bmatrix}$}$ with $\mathbf{x}_1, \mathbf{x}_3\in\mathbb{R}^{p_d+n-r}$ and  $\mathbf{x}_2\in\mathbb{R}^{n}$. By using  representations  \eqref{matrix_M0_1},\eqref{matrix_M1_1} and \eqref{matrix_M2_1}, we rewrite equation \eqref{iterative_TMtu}  as
\begin{equation}\label{well-defined}
\left\{\begin{array}{l}
\bm{\omega}_{1}=\mathrm{prox}_{\sum_{j\in\mathbb{N}_d}\lambda_j\|\cdot\|_{1}\circ\mathbf{I}^{'}_{j},\mathbf{O}}
\big(\mathbf{s}_{1}+(\theta-2)\mathbf{O}^{-1}(\mathbf{B}')^{\top}\mathbf{s}_{2}+(\theta-2)\mathbf{O}^{-1}\mathbf{s}_{3}\\
 \quad\qquad+(1-\theta)\mathbf{O}^{-1}(\mathbf{B}')^{\top}\mathbf{z}_{2}+(1-\theta)\mathbf{O}^{-1}\mathbf{z}_{3}, \\
\bm{\omega}_{2}=\mathrm{prox}_{\bm{\psi}^*,\mathbf{P}}
\big(\mathbf{s}_{2}+(1+\theta)\mathbf{P}^{-1}\mathbf{B}'\bm{\omega}_{1}-\theta\mathbf{P}^{-1}\mathbf{B}'\mathbf{s}_{1}\big),\\
\bm{\omega}_{3}=\mathrm{prox}_{\iota_{\mathbb{M}}^*,\mathbf{Q}}\big(\mathbf{s}_{3}+(1+\theta)\mathbf{Q}^{-1}\bm{\omega}_{1}-\theta\mathbf{Q}^{-1}\mathbf{s}_{1}\big). 
\end{array}\right.
\end{equation}
Clearly, for any $(\mathbf{s},\mathbf{z})\in\mathbb{R}^{2p_d+3n-2r}\times\mathbb{R}^{2p_d+3n-2r}$, there exists a unique $\bm{\omega}\in\mathbb{R}^{2p_d+3n-2r}$ satisfying equation \eqref{well-defined}. Observing from equation \eqref{iterative_TMtu}, we see that the operator $\mathcal{T}_{\mathcal{M}}$ has the same fixed-point set as $\mathcal{P}\circ\mathbf{E}$. With the help
of the operator $\mathcal{T}_{\mathcal{M}}$, we represent equation  \eqref{two-step_iterative} in an explicit form as
\begin{equation}\label{iterative_TM}
\bm{\tau}^{k+1}
=\mathcal{T}_{\mathcal{M}}(\bm{\tau}^{k},
\bm{\tau}^{k-1})  
\end{equation}
and obtain a fixed-point of $\mathcal{T}_{\mathcal{M}}$ by this iteration. 

According to Lemma \ref{converges_weakly-T}, in order to obtain the convergence of iteration \eqref{iterative_TM}, it suffices to prove that the operator $\mathcal{T}_{\mathcal{M}}$ defined by \eqref{iterative_TMtu} is weakly firmly non-expansive with respect
to $\mathcal{M}$ and the graph ${\rm gra}(\mathcal{T}_{\mathcal{M}})$ of $\mathcal{T}_{\mathcal{M}}$
is closed, and in addition, the set $\mathcal{M}$ satisfies Condition-$\mathbf{M}$. 
We first show the properties of the operator $\mathcal{T}_{\mathcal{M}}$. For this purpose, we introduce a $(2p_d+3n-2r)\times(2p_d+3n-2r)$ skew-symmetric matrix $\mathbf{S}_{\mathbf{B}}$ as
\begin{equation*}
\mathbf{S}_{\mathbf{B}}:=\begin{bmatrix}
\mathbf{0} &-(\mathbf{B}')^{\top} &-\mathbf{I}_{p_d+n-r}\\
\mathbf{B}' &\mathbf{0} 
&\mathbf{0}\\
\mathbf{I}_{p_d+n-r} 
& \mathbf{0} & \mathbf{0} 
\end{bmatrix}    
\end{equation*}
and then represent the matrix $\mathbf{E}$ as $\mathbf{E}=\mathbf{I}_{2p_d+3n-2r}+\mathbf{R}^{-1}\mathbf{S}_{\mathbf{B}}$. 

\begin{proposition}\label{TM-continuous}
Let $\mathbf{M}_{0}$, $\mathbf{M}_{1}$, $\mathbf{M}_{2}$ be $(2p_d+3n-2r)\times(2p_d+3n-2r)$ matrices defined by \eqref{matrix_M0}, \eqref{matrix_M1} and \eqref{matrix_M2}, respectively, and $\mathcal{M}:=\{\mathbf{M}_{0},\mathbf{M}_{1},\mathbf{M}_{2}\}$. If  $\mathcal{T}_{\mathcal{M}}$ is defined by equation \eqref{iterative_TMtu}, then $\mathcal{T}_{\mathcal{M}}$ is weakly firmly non-expansive with respect to $\mathcal{M}$ and the graph ${\rm gra}(\mathcal{T}_{\mathcal{M}})$ of $\mathcal{T}_{\mathcal{M}}$
is closed.
\end{proposition}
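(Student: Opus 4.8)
The plan is to combine the subdifferential characterization of the proximity operator with the monotonicity of subdifferentials, exploiting crucially the skew-symmetry of $\mathbf{S}_{\mathbf{B}}$. First I would observe that $\mathcal{P}$ is exactly the proximity operator, taken with respect to the block-diagonal matrix $\mathbf{R}=\mathrm{diag}(\mathbf{O},\mathbf{P},\mathbf{Q})$, of the block-separable convex function
$$
\Phi(\mathbf{x}):=\sum_{j\in\mathbb{N}_d}\lambda_j\|\mathbf{I}^{'}_{j}\mathbf{x}_1\|_1+\bm{\psi}^*(\mathbf{x}_2)+\iota_{\mathbb{M}}^*(\mathbf{x}_3),
$$
where $\mathbf{x}=[\mathbf{x}_1^\top,\mathbf{x}_2^\top,\mathbf{x}_3^\top]^\top$ is decomposed as in \eqref{well-defined}. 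Applying the proximity--subdifferential relation \eqref{relation_prox_subdiff} with $\mathbf{H}:=\mathbf{R}$ to the defining equation \eqref{iterative_TMtu} for $\bm{\omega}:=\mathcal{T}_{\mathcal{M}}(\mathbf{s},\mathbf{z})$ converts it into the inclusion
$$
\mathbf{R}\big((\mathbf{E}-\mathbf{R}^{-1}\mathbf{M}_{0})\bm{\omega}+\mathbf{R}^{-1}\mathbf{M}_{1}\mathbf{s}+\mathbf{R}^{-1}\mathbf{M}_{2}\mathbf{z}-\bm{\omega}\big)\in\partial\Phi(\bm{\omega}).
$$

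Next I would simplify this inclusion using the representation $\mathbf{E}=\mathbf{I}_{2p_d+3n-2r}+\mathbf{R}^{-1}\mathbf{S}_{\mathbf{B}}$ established just before the proposition. Since $\mathbf{R}(\mathbf{E}-\mathbf{R}^{-1}\mathbf{M}_{0})-\mathbf{R}=\mathbf{S}_{\mathbf{B}}-\mathbf{M}_{0}$, the inclusion collapses to
$$
(\mathbf{S}_{\mathbf{B}}-\mathbf{M}_{0})\bm{\omega}+\mathbf{M}_{1}\mathbf{s}+\mathbf{M}_{2}\mathbf{z}\in\partial\Phi(\bm{\omega}).
$$
Writing this for two input pairs $(\mathbf{s}^i,\mathbf{z}^i)$ with outputs $\bm{\omega}^i$, $i=1,2$, and invoking the monotonicity of $\partial\Phi$ (a standard consequence of convexity), I would obtain
$$
\big\langle\bm{\omega}^2-\bm{\omega}^1,\,(\mathbf{S}_{\mathbf{B}}-\mathbf{M}_{0})(\bm{\omega}^2-\bm{\omega}^1)+\mathbf{M}_{1}(\mathbf{s}^2-\mathbf{s}^1)+\mathbf{M}_{2}(\mathbf{z}^2-\mathbf{z}^1)\big\rangle\ge 0.
$$
The decisive step is then to note that $\mathbf{S}_{\mathbf{B}}$ is skew-symmetric, so $\langle\bm{\omega}^2-\bm{\omega}^1,\mathbf{S}_{\mathbf{B}}(\bm{\omega}^2-\bm{\omega}^1)\rangle=0$; the cross term drops out and the inequality rearranges precisely into the weak firm non-expansiveness condition \eqref{weakly-firmly-non-expansive}.

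For the closedness of $\mathrm{gra}(\mathcal{T}_{\mathcal{M}})$, I would argue that $\mathcal{T}_{\mathcal{M}}$ is in fact continuous on all of $\mathbb{R}^{2p_d+3n-2r}\times\mathbb{R}^{2p_d+3n-2r}$. The explicit, strictly lower-triangular form \eqref{well-defined} computes $\bm{\omega}_1,\bm{\omega}_2,\bm{\omega}_3$ sequentially, each as a proximity operator applied to an affine function of $(\mathbf{s},\mathbf{z})$ and the previously computed blocks. Since every proximity operator is firmly non-expansive, hence Lipschitz continuous, the composition defining $\mathcal{T}_{\mathcal{M}}$ is continuous, and a continuous operator defined everywhere automatically has a closed graph. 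The main obstacle is the first, bookkeeping-heavy step: correctly identifying $\Phi$ and carrying out the matrix algebra $\mathbf{R}(\mathbf{E}-\mathbf{R}^{-1}\mathbf{M}_{0})-\mathbf{R}=\mathbf{S}_{\mathbf{B}}-\mathbf{M}_{0}$ so that the skew-symmetric part isolates cleanly; once this is in place, the monotonicity argument and the cancellation of the skew-symmetric term are immediate.
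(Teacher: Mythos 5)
Your argument is correct. For the weak firm non-expansiveness your route is essentially the paper's in different clothing: the paper invokes the firm non-expansiveness of $\mathcal{P}$ with respect to $\mathbf{R}$ (established as in Lemma 3.1 of \cite{li2015multi}), substitutes $\mathbf{R}\mathbf{E}=\mathbf{R}+\mathbf{S}_{\mathbf{B}}$, and cancels the skew-symmetric term, whereas you pass through the equivalence \eqref{relation_prox_subdiff} to the subdifferential inclusion $(\mathbf{S}_{\mathbf{B}}-\mathbf{M}_{0})\bm{\omega}+\mathbf{M}_{1}\mathbf{s}+\mathbf{M}_{2}\mathbf{z}\in\partial\Phi(\bm{\omega})$ and invoke monotonicity of $\partial\Phi$; since firm non-expansiveness of a proximity operator is precisely monotonicity of the underlying subdifferential, the two derivations produce the identical inequality, though yours is slightly more self-contained in that it does not defer to an external lemma. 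Where you genuinely diverge is the closedness of the graph: the paper takes a convergent sequence in $\mathrm{gra}(\mathcal{T}_{\mathcal{M}})$, forms the candidate image $\widetilde{\bm{\omega}}$ of the limit, and uses firm non-expansiveness once more to force $\bm{\omega}^{k}\to\widetilde{\bm{\omega}}$, whereas you observe that the strictly block lower-triangular structure of $\mathbf{E}-\mathbf{R}^{-1}\mathbf{M}_{0}$ turns the implicit definition \eqref{iterative_TMtu} into the explicit sequential formula \eqref{well-defined}, so that $\mathcal{T}_{\mathcal{M}}$ is a composition of Lipschitz proximity maps and affine maps, hence continuous on all of its domain, and continuity of an everywhere-defined map immediately gives a closed graph. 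Your continuity argument is cleaner and yields a strictly stronger conclusion; the paper's sequential argument has the merit of not relying on the explicit triangular resolution (and would survive a splitting for which $\mathbf{E}-\mathbf{R}^{-1}\mathbf{M}_{0}$ is not strictly triangular), but for the matrices \eqref{matrix_M0}--\eqref{matrix_M2} actually used here both are valid.
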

\begin{proof}
We first prove that $\mathcal{T}_{\mathcal{M}}$ is weakly firmly non-expansive with respect to $\mathcal{M}$. It suffices to prove that for any $(\mathbf{s}^{i},\mathbf{z}^{i},\bm{\omega}^{i})\in\mathbb{R}^{2p_d+3n-2r}\times\mathbb{R}^{2p_d+3n-2r}\times\mathbb{R}^{2p_d+3n-2r}$ satisfying $\bm{\omega}^{i}=\mathcal{T}_{\mathcal{M}}(\mathbf{s}^{i},\mathbf{z}^{i})$ for $i=1,2,$ there holds equation \eqref{weakly-firmly-non-expansive}. It follows from definition \eqref{iterative_TMtu} of $\mathcal{T}_{\mathcal{M}}$ that 
\begin{equation}\label{w-s-z}
\bm{\omega}^{i}=\mathcal{P}\big((\mathbf{E}-\mathbf{R}^{-1}\mathbf{M}_{0})\bm{\omega}^{i}
+\mathbf{R}^{-1}\mathbf{M}_{1}\mathbf{s}^{i}
+\mathbf{R}^{-1}\mathbf{M}_{2}\mathbf{z}^{i}\big),\ i=1,2.
\end{equation}
By arguments similar to those used in the proof of Lemma 3.1 of \cite{li2015multi}, we have that operator $\mathcal{P}$ is firmly non-expansive with respect to $\mathbf{R}$, that is, for all $\mathbf{x}$, $\mathbf{y}\in\mathbb{R}^{2p_d+3n-2r}$,
\begin{equation*}
\|\mathcal{P}\mathbf{x}-\mathcal{P}\mathbf{y}\|^2 _{\mathbf{R}} \leq \langle \mathcal{P}\mathbf{x}-\mathcal{P}\mathbf{y}, \mathbf{x}-\mathbf{y} \rangle_{\mathbf{R}}.
\end{equation*} 
As a result, we get by equation \eqref{w-s-z} that  
\begin{equation*}
\|\bm{\omega}^{2}-\bm{\omega}^{1}\|^2_{\mathbf{R}}
\leq \big\langle\bm{\omega}^{2}-\bm{\omega}^{1},(\mathbf{R}\mathbf{E}-\mathbf{M}_{0})(\bm{\omega}^{2}-\bm{\omega}^{1})+\mathbf{M}_{1}(\mathbf{s}^{2}-\mathbf{s}^{1})+\mathbf{M}_{2}(\mathbf{z}^{2}-\mathbf{z}^{1}) \big\rangle. 
\end{equation*}
Substituting $\mathbf{R}\mathbf{E}=\mathbf{R}+\mathbf{S}_{\mathbf{B}}$ into the right hand side of the inequality above, we obtain that 
\begin{equation*}
\big\langle\bm{\omega}^{2}-\bm{\omega}^{1},
\mathbf{M}_{0}(\bm{\omega}^{2}-\bm{\omega}^{1})
\big\rangle
\leq \big\langle\bm{\omega}^{2}-\bm{\omega}^{1},\mathbf{S}_{\mathbf{B}}(\bm{\omega}^{2}-\bm{\omega}^{1})+\mathbf{M}_{1}(\mathbf{s}^{2}-\mathbf{s}^{1})+\mathbf{M}_{2}(\mathbf{z}^2-\mathbf{z}^1) \big\rangle. 
\end{equation*}
This together with the fact that  
$$
\langle\bm{\omega}^{2}-\bm{\omega}^{1},
\mathbf{S}_{\mathbf{B}}(\bm{\omega}^{2}-\bm{\omega}^{1})\rangle=0
$$ 
leads to equation \eqref{weakly-firmly-non-expansive}. 
Consequently, we conclude that $\mathcal{T}_{\mathcal{M}}$ is weakly firmly non-expansive with respect to $\mathcal{M}$.

It remains to show the closedness of the graph of operator  $\mathcal{T}_{\mathcal{M}}$. For any sequence 
$\big\{(\mathbf{s}^{k},
\mathbf{z}^{k},\bm{\omega}^{k})\in{\rm gra}(\mathcal{T}_{\mathcal{M}}):k\in\mathbb{N}\big\}
$ converging to $(\mathbf{s},\mathbf{z},\bm{\omega})\in\mathbb{R}^{2p_d+3n-2r}\times\mathbb{R}^{2p_d+3n-2r}\times\mathbb{R}^{2p_d+3n-2r}$, we obtain from definition \eqref{iterative_TMtu} of $\mathcal{T}_{\mathcal{M}}$ that 
\begin{equation*}
\bm{\omega}^{k}=\mathcal{P}((\mathbf{E}-\mathbf{R}^{-1}\mathbf{M}_{0})\bm{\omega}^{k}
+\mathbf{R}^{-1}\mathbf{M}_{1}\mathbf{s}^{k}
+\mathbf{R}^{-1}\mathbf{M}_{2}\mathbf{z}^{k}), \ \mbox{for all}\ k\in\mathbb{N}. 
\end{equation*}
Associated with $(\mathbf{s},\mathbf{z},\bm{\omega})$, we introduce a vector $\widetilde{\bm{\omega}}\in\mathbb{R}^{2p_d+3n-2r}$ as   
$$
\widetilde{\bm{\omega}}:=\mathcal{P}((\mathbf{E}-\mathbf{R}^{-1}\mathbf{M}_{0})\bm{\omega}
+\mathbf{R}^{-1}\mathbf{M}_{1}\mathbf{s}
+\mathbf{R}^{-1}\mathbf{M}_{2}\mathbf{z}). 
$$
Since $\mathcal{P}$ is firmly non-expansive with respect to $\mathbf{R}$, there holds for any $k\in\mathbb{N}$
\begin{equation*}
\|\bm{\omega}^{k}-\widetilde{\bm{\omega}}\|^2_{\mathbf{R}}
\leq \big\langle \bm{\omega}^{k}-\widetilde{\bm{\omega}},(\mathbf{R}-\mathbf{M}_{0})(\bm{\omega}^{k}-\bm{\omega})+\mathbf{M}_{1}(\mathbf{s}^{k}-\mathbf{s})+\mathbf{M}_{2}(\mathbf{z}^{k}-\mathbf{z})\big\rangle.
\end{equation*}
By letting $k\rightarrow+\infty$ with noting that $\mathbf{s}^k\rightarrow\mathbf{s}$ and $\mathbf{z}^k\rightarrow\mathbf{z}$ as $k\rightarrow+\infty$, we get $\lim_{k\to+\infty}\bm{\omega}^{k}=\widetilde{\bm{\omega}}$. As a result, $\bm{\omega}=\widetilde{\bm{\omega}}$. This together with the definition of $\widetilde{\bm{\omega}}$ leads directly to 
$\bm{\omega}=\mathcal{T}_{\mathcal{M}}(\mathbf{s},\mathbf{z})$. Therefore, the graph ${\rm gra}(\mathcal{T}_{\mathcal{M}})$ of $\mathcal{T}_{\mathcal{M}}$
is closed.
\end{proof}

The next proposition reveals that the set $\mathcal{M}$ satisfies Condition-$\mathbf{M}$ under conditions \eqref{satisfy-M-condition} and \eqref{satisfy-M-condition-H}. 

\begin{proposition}\label{satisfy-M-condition-proposition}
Let $\mathbf{M}_{0}$, $\mathbf{M}_{1}$, $\mathbf{M}_{2}$ be $(2p_d+3n-2r)\times(2p_d+3n-2r)$ matrices defined by \eqref{matrix_M0}, \eqref{matrix_M1} and \eqref{matrix_M2}, respectively. If conditions \eqref{satisfy-M-condition} and \eqref{satisfy-M-condition-H} hold, then the set $\mathcal{M}:=\{\mathbf{M}_{0}$, $\mathbf{M}_{1}$, $\mathbf{M}_{2}\}$ satisfies Condition-$\mathbf{M}$.
\end{proposition}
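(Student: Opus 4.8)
The plan is to verify the three hypotheses (i)--(iii) of Condition-$\mathbf{M}$ in turn, letting the two assumed inequalities \eqref{satisfy-M-condition} and \eqref{satisfy-M-condition-H} drive hypotheses (ii) and (iii) respectively. Hypothesis (i), that $\mathbf{M}_0=\mathbf{M}_1+\mathbf{M}_2$, is immediate by adding the block matrices \eqref{matrix_M1} and \eqref{matrix_M2} entrywise, as already noted after \eqref{matrix_M2}. The substance of the proof is the analysis of $\mathbf{H}:=\mathbf{M}_0+\mathbf{M}_2$. Adding \eqref{matrix_M0} and \eqref{matrix_M2}, the off-diagonal blocks in the first block-row collapse to $-\theta(\mathbf{B}')^{\top}$ and $-\theta\mathbf{I}_{p_d+n-r}$, so $\mathbf{H}$ is symmetric, and in the notation \eqref{F-G} it has the compact two-by-two block form $\mathbf{H}=\begin{bmatrix}\mathbf{O} & -\theta\mathbf{G}^{\top}\\ -\theta\mathbf{G} & \mathbf{F}\end{bmatrix}$, where $\mathbf{F}=\mathrm{diag}(\mathbf{P},\mathbf{Q})$ and $\mathbf{G}=\begin{bmatrix}\mathbf{B}'\\ \mathbf{I}_{p_d+n-r}\end{bmatrix}$.

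For hypothesis (ii) I would estimate the quadratic form of $\mathbf{H}$. Partition $\mathbf{x}=(\mathbf{x}_1,\mathbf{x}_2)$ conformally with $\mathbf{H}$ and introduce the weighted change of variables $\mathbf{x}_1=\mathbf{O}^{-1/2}\mathbf{y}_1$, $\mathbf{x}_2=\mathbf{F}^{-1/2}\mathbf{y}_2$, which yields
$$
\langle\mathbf{x},\mathbf{H}\mathbf{x}\rangle=\|\mathbf{y}_1\|^2+\|\mathbf{y}_2\|^2-2\theta\big\langle\mathbf{F}^{-1/2}\mathbf{G}\mathbf{O}^{-1/2}\mathbf{y}_1,\,\mathbf{y}_2\big\rangle .
$$
Writing $\kappa:=\|\mathbf{F}^{-1/2}\mathbf{G}\mathbf{O}^{-1/2}\|_2$ and applying Cauchy--Schwarz together with $2\|\mathbf{y}_1\|\|\mathbf{y}_2\|\le\|\mathbf{y}_1\|^2+\|\mathbf{y}_2\|^2$ bounds the form below by $(1-|\theta|\kappa)(\|\mathbf{y}_1\|^2+\|\mathbf{y}_2\|^2)$. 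Condition \eqref{satisfy-M-condition} is precisely $|\theta|\kappa<1$, so this is strictly positive whenever $\mathbf{x}\neq\mathbf{0}$, giving $\mathbf{H}\in\mathbb{S}_+^{2p_d+3n-2r}$. The same chain, combined with $\|\mathbf{y}_1\|^2\ge\|\mathbf{x}_1\|^2/\|\mathbf{O}^{-1}\|_2$ and $\|\mathbf{y}_2\|^2\ge\|\mathbf{x}_2\|^2/\|\mathbf{F}^{-1}\|_2$, produces the quantitative bound $\lambda_{\min}(\mathbf{H})\ge(1-|\theta|\kappa)/\max\{\|\mathbf{O}^{-1}\|_2,\|\mathbf{F}^{-1}\|_2\}$, which is exactly what hypothesis (iii) will consume.

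For hypothesis (iii), since $\mathbf{H}$ is positive definite one has $\|\mathbf{H}^{-1/2}\mathbf{M}_2\mathbf{H}^{-1/2}\|_2\le\|\mathbf{H}^{-1}\|_2\,\|\mathbf{M}_2\|_2=\|\mathbf{M}_2\|_2/\lambda_{\min}(\mathbf{H})$. Because $\mathbf{M}_2$ has a single nonzero block $(1-\theta)\mathbf{G}^{\top}$ (all other blocks being zero), its spectral norm is exactly $|1-\theta|\,\|\mathbf{G}\|_2$. Substituting this and the lower bound on $\lambda_{\min}(\mathbf{H})$ from the previous step turns the right-hand side into the left-hand side of \eqref{satisfy-M-condition-H}, which is assumed to be $<1/2$. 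This establishes (iii) and, together with (i) and (ii), completes the verification of Condition-$\mathbf{M}$.

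I expect the main obstacle to be the quadratic-form estimate underlying steps (ii) and (iii): one must select the correct $\mathbf{O}^{-1/2}$/$\mathbf{F}^{-1/2}$ weighting so that the two hypotheses surface in exactly the normalized forms \eqref{satisfy-M-condition} and \eqref{satisfy-M-condition-H}, and must check that the crude submultiplicative bound $\|\mathbf{H}^{-1/2}\mathbf{M}_2\mathbf{H}^{-1/2}\|_2\le\|\mathbf{M}_2\|_2/\lambda_{\min}(\mathbf{H})$ is still sharp enough to land below $1/2$. Everything else is routine bookkeeping with the block structure of $\mathbf{M}_0,\mathbf{M}_1,\mathbf{M}_2$.
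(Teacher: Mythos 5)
Your proof is correct, and its overall skeleton matches the paper's: you verify hypotheses (i)--(iii) in order, collapse $\mathbf{H}=\mathbf{M}_0+\mathbf{M}_2$ into the same two-by-two block form $\left[\begin{smallmatrix}\mathbf{O} & -\theta\mathbf{G}^{\top}\\ -\theta\mathbf{G} & \mathbf{F}\end{smallmatrix}\right]$, compute $\|\mathbf{M}_2\|_2=|1-\theta|\,\|\mathbf{G}\|_2$ from its single nonzero block, and close (iii) with the bound $\|\mathbf{H}^{-1/2}\mathbf{M}_2\mathbf{H}^{-1/2}\|_2\le\|\mathbf{H}^{-1}\|_2\|\mathbf{M}_2\|_2$. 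The one genuine difference is how the two facts about $\mathbf{H}$ are obtained: the paper outsources both the positive definiteness of $\mathbf{H}$ under \eqref{satisfy-M-condition} and the estimate $\|\mathbf{H}^{-1}\|_2\le\max\{\|\mathbf{O}^{-1}\|_2,\|\mathbf{F}^{-1}\|_2\}/(1-|\theta|\kappa)$ to Lemma 6.2 of the cited reference \cite{li2015multi}, whereas you re-derive both from scratch via the weighted change of variables $\mathbf{x}_1=\mathbf{O}^{-1/2}\mathbf{y}_1$, $\mathbf{x}_2=\mathbf{F}^{-1/2}\mathbf{y}_2$, Cauchy--Schwarz, and the inequality $2\|\mathbf{y}_1\|\|\mathbf{y}_2\|\le\|\mathbf{y}_1\|^2+\|\mathbf{y}_2\|^2$, followed by $\|\mathbf{y}_i\|^2\ge\|\mathbf{x}_i\|^2/\|\cdot^{-1}\|_2$. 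Your quadratic-form computation and each inequality in the chain check out, and the resulting lower bound on $\lambda_{\min}(\mathbf{H})$ reproduces exactly the estimate the paper imports. What your route buys is a self-contained proof that exposes why the normalization $\mathbf{F}^{-1/2}\mathbf{G}\mathbf{O}^{-1/2}$ appears in \eqref{satisfy-M-condition}; what the paper's route buys is brevity and consistency with the framework of \cite{li2015multi} on which the surrounding convergence theory already relies. Either is acceptable.
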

\begin{proof}
It is clear  that  $\mathbf{M}_{0}=\mathbf{M}_{1}+\mathbf{M}_{2}$, that is, Item (i) of Condition-$\mathbf{M}$ holds. To show the validity of Item (ii), we set $\mathbf{H}:=\mathbf{M}_{0}+\mathbf{M}_{2}$. By introducing block matrices $\mathbf{F}$ and $\mathbf{G}$ as in \eqref{F-G}, we represent $\mathbf{H}$ as 
\begin{equation*}
\mathbf{H}
=\begin{bmatrix}
&\mathbf{O} &-\theta\mathbf{G}^{\top}\\
&-\theta\mathbf{G} &\mathbf{F}
\end{bmatrix}.
\end{equation*}
It follows from condition \eqref{satisfy-M-condition} that
\begin{equation}\label{inequality-norm-FGO}
\big\| 
\mathbf{F}^{-\frac{1}{2}}(-\theta\mathbf{G})
\mathbf{O}^{-\frac{1}{2}}
\big\|_2
<1,
\end{equation}
which guaranteed by Lemma 6.2 in \cite{li2015multi} is equivalent to 
$\mathbf{H}\in\mathbb{S}_{+}^{2p_d+3n-2r}$. Hence, we get Item (ii) of Condition-$\mathbf{M}$.

It remains to verify Item (iii) of Condition-$\mathbf{M}$. Lemma 6.2 in \cite{li2015multi} ensures that if inequality \eqref{inequality-norm-FGO} holds, the norm of matrix $\mathbf{H}^{-1}$ can be estimated by  
\begin{equation}\label{norm_H_inverse}
\|\mathbf{H}^{-1}\|_2\leq 
\frac{\max\big\{\|\mathbf{O}^{-1}\|_2,
\left\|\mathbf{F}^{-1}
\right\|_2\big\}}{1-|\theta|\big\| 
\mathbf{F}^{-\frac{1}{2}}\mathbf{G}
\mathbf{O}^{-\frac{1}{2}}
\big\|_2}.
\end{equation}
We observe that 
$$
\mathbf{M}_{2}^{\top}\mathbf{M}_{2}
=(1-\theta)^2\begin{bmatrix}
&\mathbf{0} &\mathbf{0} \\
&\mathbf{0} &\mathbf{G}\mathbf{G}^{\top} 
\end{bmatrix},
$$
which yields that \begin{equation}\label{norm_M_2}
\|\mathbf{M}_{2}\|_2
=|1-\theta|\|\mathbf{G}\|_2.
\end{equation} 
It follows that 
\begin{equation}\label{AAequation}
    \big\|\mathbf{H}^{-1/2}\mathbf{M}_{2}\mathbf{H}^{-1/2}
\big\|_2 \leq \big\|\mathbf{H}^{-1/2}
\big\|_2^2\big\|\mathbf{M}_{2}\big\|_2.
\end{equation}
Substituting inequality \eqref{norm_H_inverse} and equation \eqref{norm_M_2} into inequality \eqref{AAequation} yields that \begin{equation*}
\big\|\mathbf{H}^{-1/2}\mathbf{M}_{2}\mathbf{H}^{-1/2}
\big\|_2 \leq \frac{|1-\theta|\|\mathbf{G}\|_2\max\big\{\|\mathbf{O}^{-1}\|_2,
\left\|\mathbf{F}^{-1}
\right\|_2\big\}}{1-|\theta|\big\| 
\mathbf{F}^{-\frac{1}{2}}\mathbf{G}
\mathbf{O}^{-\frac{1}{2}}
\big\|_2},
\end{equation*}
which together with condition  \eqref{satisfy-M-condition-H} further leads to Item (iii) of Condition-$\mathbf{M}$.
\end{proof}

Combining Lemma \ref{converges_weakly-T}, Propositions \ref{TM-continuous} and \ref{satisfy-M-condition-proposition}, we now prove Theorem \ref{convergence_FPPA} as follows.

\begin{proof}[Proof of Theorem \ref{convergence_FPPA}]
Proposition \ref{TM-continuous} ensures that $\mathcal{T}_{\mathcal{M}}$ is weakly firmly non-expansive with respect to $\mathcal{M}$ and the graph ${\rm gra}(\mathcal{T}_{\mathcal{M}})$ of $\mathcal{T}_{\mathcal{M}}$
is closed. Moreover, the set $\mathcal{M}$, guaranteed by Proposition \ref{satisfy-M-condition-proposition}, satisfies Condition-$\mathbf{M}$. That is, the hypotheses of Lemma  \ref{converges_weakly-T} are satisfied. By Lemma  \ref{converges_weakly-T}, we get that  
the sequence $\{\bm{\tau}^{k}:k\in\mathbb{N}\}$, generated by 
\eqref{iterative_TM} for any given $\bm{\tau}^{0}$, $\bm{\tau}^{1}\in\mathbb{R}^{2p_d+3n-2r}$, converges to a fixed-point of $\mathcal{T}_{\mathcal{M}}$. Note that iteration \eqref{FPPA_w} has the equivalent form \eqref{iterative_TM} and the operator $\mathcal{T}_{\mathcal{M}}$ has the same fixed-point set as $\mathcal{P}\circ\mathbf{E}$. Consequently, we conclude that the sequence $\{\bm{\tau}^{k}:k\in\mathbb{N}\}$, generated by iteration \eqref{FPPA_w} for any given $\bm{\tau}^0, \bm{\tau}^1\in\mathbb{R}^{2p_d+3n-2r}$, converges to a fixed-point of operator $\mathcal{P}\circ\mathbf{E}$.
\end{proof}

\section{Numerical experiments}\label{Numer}

We present six numerical experiments to thoroughly evaluate the proposed multi-parameter selection strategies. The first four experiments (Subsections 6.1--6.4) assess the algorithm's ability to achieve target sparsity levels across different models, including a real-world classification task. The last two experiments (Subsections 6.5--6.6) verify the theoretical convergence assumptions and analyze the sensitivity of key hyperparameters, respectively. All experiments were performed in MATLAB R2024b on a workstation equipped with an Intel Core i9-14900KF CPU (24 cores, 32 threads, 3.2 GHz), an NVIDIA GeForce RTX 4070 Ti GPU (12 GB GDDR6X), and 32 GB of RAM.

Throughout the experiments, the scalar $\theta$  and the matrices $\mathbf{O}$, $\mathbf{P}$, $\mathbf{Q}$ are chosen as in \eqref{OPQtheta}, with positive parameters $\alpha$, $\rho$ and $\beta$ tuned to satisfy the convergence condition \eqref{OPQtheta-convergence-condition}. 

To derive the simplified algorithm, we employ the following well-known property of proximity operators \cite{Bauschke2011}: for any convex function $f$ from $\mathbb{R}^s$ to $\overline{\mathbb{R}}$ and $\mu>0$
$$
\mathrm{prox}_{f,\mu\mathbf{I}_s}=\mathrm{prox}_{\frac{1}{\mu}f}
\ \
\text{and} 
\ \ 
\mathrm{prox}_{f^*,\mu\mathbf{I}_s}
=\frac{1}{\mu} (\mathbf{I}_s-\mathrm{prox}_{\mu f})\circ(\mu\mathbf{I}_s).
$$
Applying these relations,  iteration \eqref{FPPA_w} reduces to
\begin{equation}\label{Reduce_FPPA_w}
\left\{\begin{array}{l}
\mathbf{w}^{k+1}=\mathrm{prox}_{\alpha\sum_{j\in\mathbb{N}_d}\lambda_j\|\cdot\|_{1}\circ\mathbf{I}^{'}_{j}}
\big(\mathbf{w}^{k}-\alpha(\mathbf{B}')^{\top}\hat{\mathbf{a}}^{k}-\alpha\mathbf{c}^{k}\big), \\
\ \hat{\mathbf{a}}^{k+1}=\rho\big(\mathbf{I}_n-\mathrm{prox}_{\frac{1}{\rho}\bm{\psi}}\big)
\big(\frac{1}{\rho}\hat{\mathbf{a}}^{k}+\mathbf{B}'(2\mathbf{w}^{k+1}-\mathbf{w}^{k})\big),\\
\ 
\mathbf{c}^{k+1}=
\beta\big(\mathbf{I}_{p_d+n-r}-\mathrm{prox}_{\frac{1}{\beta}\iota_{\mathbb{M}}}\big)\big(\frac{1}{\beta}\mathbf{c}^{k}+2\mathbf{w}^{k+1}-\mathbf{w}^{k}\big),
\end{array}\right.
\end{equation}
so that Algorithm \ref{FPPA} can be simplified to Algorithm \ref{FPPA1}. 

Closed-form expressions for the proximity operators in Algorithm \ref{FPPA1} are given in Appendix~A.

\begin{algorithm}
  \caption{Fixed-Point Proximity Algorithm for problem \eqref{optimization_problem_under_Bj_3} with $\theta$, $\mathbf{O}$, $\mathbf{P}$, $\mathbf{Q}$ as in \eqref{OPQtheta} }
  
  \label{FPPA1}
  
  \KwInput{$\bm{\psi}$, $\mathbf{B}'$, $\{\mathbf{I}^{'}_{j}:j\in\mathbb{N}_d\}$, $\{\lambda_{j}:j\in\mathbb{N}_d\}$, $\mathbb{M}$; $\alpha$, $\rho$, $\beta$ satisfying $\left\|\scalebox{0.8}{$\begin{bmatrix}
\sqrt{\rho}\mathbf{B}'\\
\sqrt{\beta}\mathbf{I}_{p_d+n-r}
\end{bmatrix}$}\right\|_2<1/\sqrt{\alpha}$.}
  
  \KwInitialization{$\mathbf{w}^0$, $\hat{\mathbf{a}}^0$, $\mathbf{c}^0$.}
  \For{$k = 0,1,2,\ldots$}
  {$\mathbf{w}^{k+1} \gets  \mathrm{prox}_{\alpha\sum_{j\in\mathbb{N}_d}\lambda_j\|\cdot\|_{1}\circ\mathbf{I}^{'}_{j}}
 \big(\mathbf{w}^{k}-\alpha(\mathbf{B}')^{\top}\hat{\mathbf{a}}^{k}-\alpha\mathbf{c}^{k}\big)$.\\
 $\hat{\mathbf{a}}^{k+1} \gets \rho\big(\mathbf{I}_n-\mathrm{prox}_{\frac{1}{\rho}\bm{\psi}}\big)
\big(\frac{1}{\rho}\hat{\mathbf{a}}^{k}+\mathbf{B}'(2\mathbf{w}^{k+1}-\mathbf{w}^{k})\big)$.\\
$\mathbf{c}^{k+1} \gets \beta\big(\mathbf{I}_{p_d+n-r}-\mathrm{prox}_{\frac{1}{\beta}\iota_{\mathbb{M}}}\big)\big(\frac{1}{\beta}\mathbf{c}^{k}+2\mathbf{w}^{k+1}-\mathbf{w}^{k}\big)$.
 }
\KwOutput{$\mathbf{w}^{k+1}$, $\hat{\mathbf{a}}^{k+1}$, $\mathbf{c}^{k+1}$}  
\end{algorithm}

In reporting the numerical results, we adopt the following notation. For the multi-parameter regularization model, ``TSLs'' and ``SLs'' denote, respectively, the target sparsity levels $\{l^*_j : j \in \mathbb{N}_d\}$ and the sparsity levels $\{l_j : j \in \mathbb{N}_d\}$ of the solutions obtained from the parameter choice strategies. For the single-parameter regularization model, ``TSL'' and ``SL'' denote, respectively, the target sparsity level $l^*$ and the sparsity level $l$ of the corresponding solution.

In the first two experiments related to the special regularization problem \eqref{optimization_problem}, we use ``Ratio'' to denote the ratio $R$ between the number of nonzero components and the total number of components in the obtained solution. In the last two experiments, which concern the general regularization problem \eqref{optimization_problem_under_Bj}, ``Ratios'' denotes $\{R_j:j\in\mathbb{N}_d\}$, where each $R_j$ is the ratio of the number of nonzero components
to the total number of components of the solution under the transform matrix $\mathbf{B}_{j}$. When applying Algorithm \ref{Sparsity-Guided Multi-Parameter Selection Algorithm} for regularization parameter selection, ``NUM'' and ``Time'' to denote the number of iterations and the total computational time (in seconds) for selecting regularization parameters, respectively. 

\subsection{Parameter choices: a block separable case}\label{NM_block_separable}
In this experiment, we validate the parameter selection strategy described in
Corollary \ref{block_separabel_parameter_choice} using the multi-parameter regularization model \eqref{lasso_multi-parameter} for signal denoising. For comparison, we also evaluate the single-parameter regularization model for the same task.

We employ model \eqref{lasso_multi-parameter} to recover the Doppler signal function 
\begin{equation}\label{Doppler_signal}
f(t):=\sqrt{t(1-t)}\sin((2.1\pi)/(t+0.05)),
\ t\in[0,1],
\end{equation}
from its noisy data. Let $n:=262144$ and $t_j$, $j\in\mathbb{N}_n$, be the sample points on a uniform grid in $[0,1]$ with step size $h:={1}/{(n-1)}$. We recover the signal  $\mathbf{f}:=[f(t_j):j\in\mathbb{N}_n]$ from the noisy signal  $\mathbf{x}:=\mathbf{f}+\bm{\eta}$, where $\bm{\eta}$ is an additive white Gaussian noise with the signal-to-noise ratio $\text{SNR}=20$.

We describe the multi-parameter regularization model \eqref{lasso_multi-parameter} as follows. In this model, we let $d:=10$ and choose the matrix 
$\mathbf{A}\in\mathbb{R}^{n\times n}$ as the Daubechies wavelet transform with the vanishing moments $\mathrm{N}:=6$ and the coarsest resolution level $\mathrm{L}:=9$.  Let $m_1:=2^9$, $m_j:=2^{j+7}$, $j\in\mathbb{N}_{d}\setminus \{1\}$ and set $p_0:=0$, $
p_j:=\sum_{i\in\mathbb{N}_j}m_i$, $j\in\mathbb{N}_d$. We  choose the partition 
$\mathcal{S}_{n,d}:=\left\{S_{n,1},S_{n,2},\ldots,S_{n,d}\right\}$ for $\mathbb{N}_{n}$ with $S_{n,j}:=\{p_{j-1}+k:k\in\mathbb{N}_{m_j}\}$, $j\in\mathbb{N}_d$.
Associated with this partition, we decompose $\mathbf{u}:=[u_k:k\in\mathbb{N}_n]\in\mathbb{R}^n$ into $d$ sub-vectors $\mathbf{u}_j:=[u_{p_{j-1}+k}: k\in\mathbb{N}_{m_j}]$, $j\in\mathbb{N}_d$, and decompose $\mathbf{A}$ into $d$ sub-matrices $\mathbf{A}_{[j]}:=[\mathbf{A}_{(p_{j-1}+k)}:k\in \mathbb{N}_{m_j}]\in\mathbb{R}^{n\times m_j},\ j\in\mathbb{N}_d$. 
Moreover, for each $j\in\mathbb{N}_d$, we choose 
the nature partition $\mathcal{S}_{m_j,m_j}:=\left\{S_{m_j,1},S_{m_j,2},\ldots,S_{m_j,m_j}\right\}$ for $\mathbb{N}_{m_j}$. That is, $S_{m_j,k}:=\{p_{j-1}+k\}$, $ k\in\mathbb{N}_{m_j}$. Accordingly, we decompose vector $\mathbf{u}_j$ into $m_j$ sub-vectors $\mathbf{u}_{j,k}:=u_{p_{j-1}+k}$ and decompose matrix $\mathbf{A}_{[j]}$ into $m_j$ sub-matrices $
\mathbf{A}_{[j,k]}:=\mathbf{A}_{(p_{j-1}+k)}$, $ k\in\mathbb{N}_{m_j}$. 
It follows from the orthogonality of matrix $\mathbf{A}$ that conditions \eqref{S_nd_block_separable} and \eqref{S_mj_qj_block_separable} are satisfied. This allows us to choose the regularization parameters according to the strategy stated in Corollary \ref{block_separabel_parameter_choice}. We note that if $\mathbf{u}_{\bm{\lambda}}$ is a solution of problem \eqref{lasso_multi-parameter} with $\bm{\lambda}:=[\lambda_j:j\in\mathbb{N}_d]$, then for each $j\in\mathbb{N}_d$, the $\mathcal{S}_{m_j,m_j}$-block sparsity level of $(\mathbf{u}_{\bm{\lambda}})_j$ coincides with its sparsity level. 

We first validate the  parameter choice strategy stated in 
Corollary \ref{block_separabel_parameter_choice}. We set three prescribed TSLs values $[512$, $17$, $10$, $15$, $2$, $8$, $3$, $8$, $15$, $10]$,  $[512$, $90$, $60$, $10$, $12$, $12$, $18$, $15$, $12$, $9]$ and $[512$, $25$, $10$, $15$, $14$, $17$, $37$, $62$, $146$, $262]$. According to the strategy stated in Corollary \ref{block_separabel_parameter_choice}, we select the parameter $\bm{\lambda}^*:=[\lambda_j^*:j\in\mathbb{N}_d]$ with which model \eqref{lasso_multi-parameter} has solutions having the target sparsity levels. For each $j\in\mathbb{N}_{d}$, we set 
$$
\gamma_{j,k}:=\big|(\mathbf{A}_{[j,k]})^{\top}\mathbf{x}\big|,\ \ k\in\mathbb{N}_{m_j}
$$ 
and rearrange them in a nondecreasing order: 
$$
\gamma_{j,k_1}\leq \gamma_{j,k_2}\leq \cdots\leq \gamma_{j,k_{m_j}}\ \
\mbox{with}\ \ \{k_1,  k_2, \ldots, k_{m_j}\}=\mathbb{N}_{m_j}.
$$
We then choose $\lambda^*_j:=\gamma_{j,k_{m_j-l_j^*}}$ if $l^*_j<m_j$ and $\lambda^*_j:=0.1\gamma_{j,k_1}$ if $l^*_j=m_j$. We solve model \eqref{lasso_multi-parameter} with each selected value of $\bm{\lambda}^*$ for the corresponding solution $\mathbf{u}_{\bm{\lambda}^*}$ and determine the actual sparsity level SLs of $\mathbf{u}_{\bm{\lambda}^*}$. 

In this experiment, we solve model \eqref{lasso_multi-parameter} using Algorithm \ref{FPPA1} to obtain the solution $\mathbf{u}_{\bm{\lambda}^*}$. Specifically, $\mathbf{u}_{\bm{\lambda}^*}$ is directly derived from the output $(\mathbf{w},\hat{\mathbf{a}},\mathbf{c})$ of Algorithm \ref{FPPA1}, that is, $\mathbf{u}_{\bm{\lambda}^*}=\mathbf{w}$. In Algorithm \ref{FPPA1}, the function $\bm{\psi}$ is defined by \eqref{fidelity_term}, $\mathbf{B}'=\mathbf{I}_n$,  $\mathbb{M}=\mathbb{R}^{n}$, and for each $j\in\mathbb{N}_d$, $\lambda_j=\lambda^*_j$, $\mathbf{I}^{'}_{j}$ is defined by \eqref{I-j} with $p_d=n=r$. The parameters are set as $\alpha:=99$,  $\rho:=0.01$, and $\beta:=0.0001$. 
The closed-form formulas of the proximity operators $\mathrm{prox}_{\alpha\sum_{j\in\mathbb{N}_d}\lambda_j\|\cdot\|_{1}\circ\mathbf{I}^{'}_{j}}$ and $\mathrm{prox}_{\frac{1}{\rho}\bm{\psi}}$ are provided in \eqref{proximity-operator-sum_lambda_j1} and \eqref{Au-2-norm-orthogonal}, respectively. Additionally, from equation \eqref{indicator_prox}, we have that  $\mathrm{prox}_{\frac{1}{\beta}\iota_{\mathbb{M}}}=\mathbf{I}_n$.     

We report in Table \ref{Multi_parameter_signal_DWT_block} the numerical results of this experiment: the targeted sparsity levels TSLs, the selected values of parameter $\bm{\lambda}^*$, the actual sparsity levels SLs of  $\mathbf{u}_{\bm{\lambda}^*}$, the Ratio values of $\mathbf{u}_{\bm{\lambda}^*}$ and the $\mathrm{MSE}$ values of the denoised signals $\mathbf{A}\mathbf{u}_{\bm{\lambda}^*}$. Here and in the next subsection, 
$$
\mathrm{MSE}:=\frac{1}{n}\|\mathbf{A}\mathbf{u}_{\bm{\lambda}^*}-\mathbf{x}\|_2^2.
$$
Observing from Table \ref{Multi_parameter_signal_DWT_block}, the SLs values match with the TSLs values.  The numerical results in Table \ref{Multi_parameter_signal_DWT_block} demonstrate the efficacy of the strategy stated in Corollary \ref{block_separabel_parameter_choice} in selecting regularization parameters with which the corresponding solution achieves a desired sparsity level and preserves the approximation accuracy. 

To visually demonstrate the denoising effect, we present the experimental results graphically. Figure \ref{Orignal_Noisy_Block} compares the original and noisy signals, highlighting the characteristics of noise interference. Figure \ref{Orignal_Denoised_Block} shows the denoised signals reconstructed by model \eqref{lasso_multi-parameter} for the three TSLs listed in Table \ref{Multi_parameter_signal_DWT_block}, using regularization parameters selected according to Corollary \ref{block_separabel_parameter_choice}. By comparing Figures \ref{Orignal_Noisy_Block} and \ref{Orignal_Denoised_Block}, the method’s effectiveness in waveform recovery and noise suppression under different sparsity requirements can be intuitively observed.

\begin{table}[ht]
\caption{Multi-parameter choices $\lambda^*_j:=\gamma_{j,k_{m_j-l_j^*}}$ for model \eqref{lasso_multi-parameter} (block separable)}
\label{Multi_parameter_signal_DWT_block} 
\begin{tabular}{l||c|c|c}
\hline
TSLs &$[512, 17,10,15,2,$ &$[512,90,60,10,12,$ 
&$[512,25,10,15,14,$  \\ 
&$8,3,8,15,10]$
&$12,18,15,12,9]$ 
&$17,37,62,146,262]$  \\
$\bm{\lambda}^*$ ($\times 10^{-2}$)
&$[0.22,20.60,8.99,7.75,10.22,$  
&$[0.22,4.32,5.97,8.33,9.08,$ 
&$[0.22,8.92,8.99,7.75,8.78,$ \\
&$9.28,10.48,10.47,10.68,11.69]$ &$9.10,9.67,10.06,10.79,11.71]$
&$8.92,8.98,9.00,9.00,9.00]$  \\
$\mathrm{SLs}$  
&$[512,17,10,15,2,$  &$[512,90,60,10,12,$ 
&$[512,25,10,15,14,$  \\ 
&$8,3,8,15,10]$  
&$12,18,15,12,9]$ 
&$17,37,62,146,262]$  \\ 
   
Ratio (\%) &$0.23$ &$0.29$ &$0.42$ \\
$\mathrm{MSE}$ ($\times10^{-6}$) &$5.88$ &$2.59$ &$3.46$  \\  
\hline
\end{tabular}
\end{table}

\begin{figure}[h]
\centering
\begin{tabular}{{c@{\hspace{15pt}}c}}
\includegraphics[width=5cm,height=4cm]{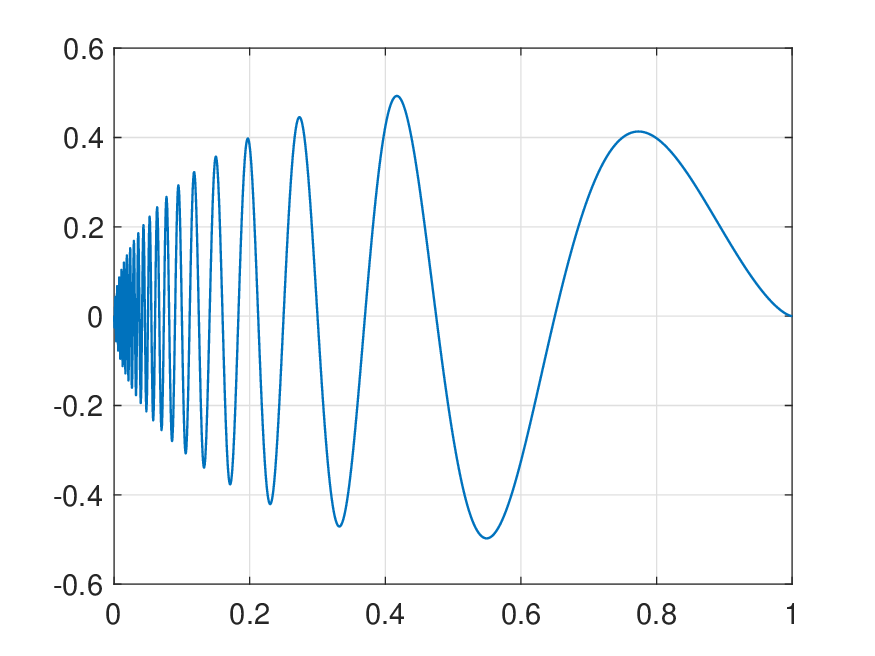}
&\includegraphics[width=5cm,height=4cm]{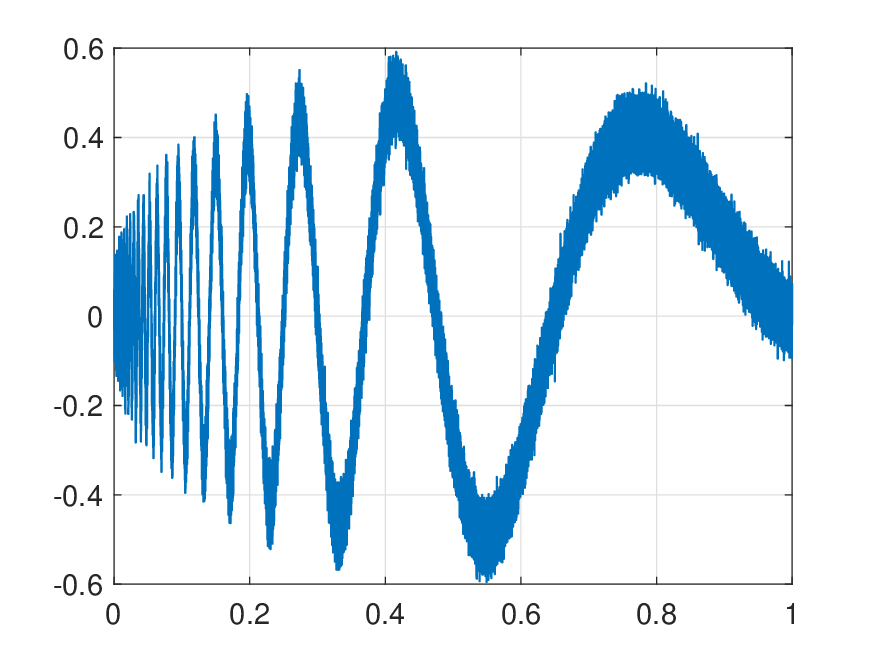}
\\
{\scriptsize (a)} & {\scriptsize (b)} 
\end{tabular}
\\
\caption{(a) Original signal; (b) Noisy signal.} 
\label{Orignal_Noisy_Block}
\end{figure}

\begin{figure}[h]
\centering
\begin{tabular}{c@{\hspace{5pt}}c@{\hspace{5pt}}c}
\includegraphics[width=5cm,height=4cm]{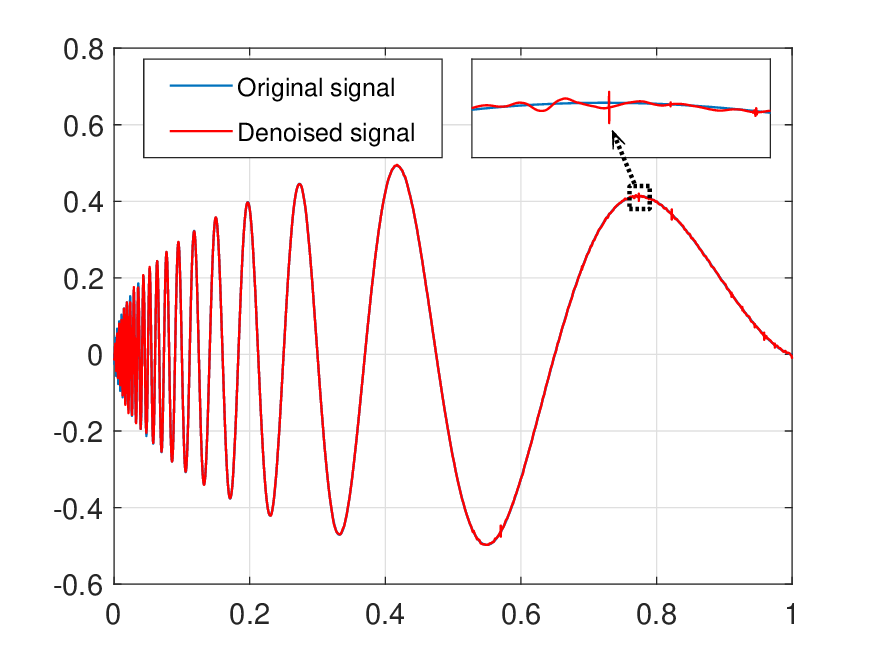} &
\includegraphics[width=5cm,height=4cm]{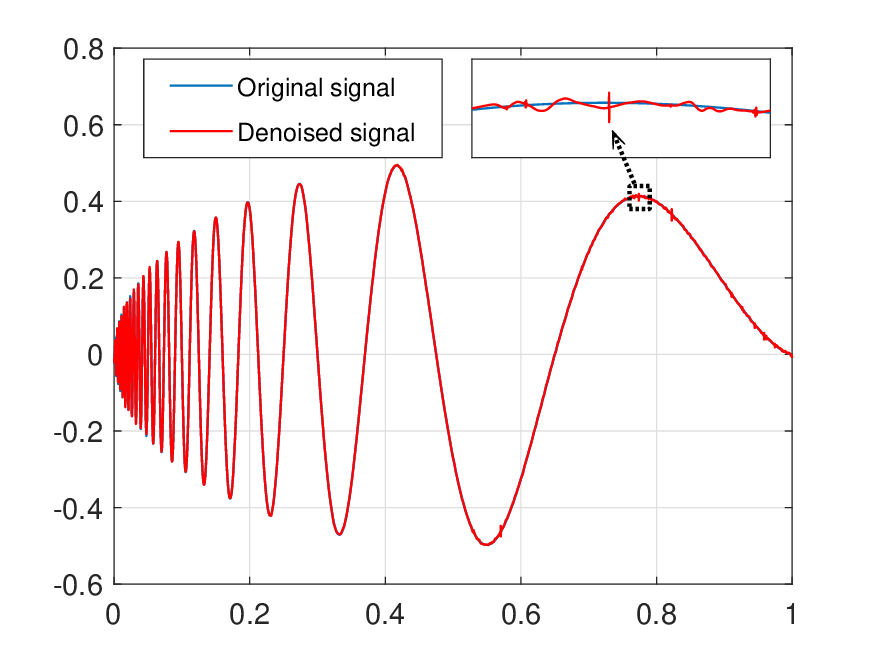} &
\includegraphics[width=5cm,height=4cm]{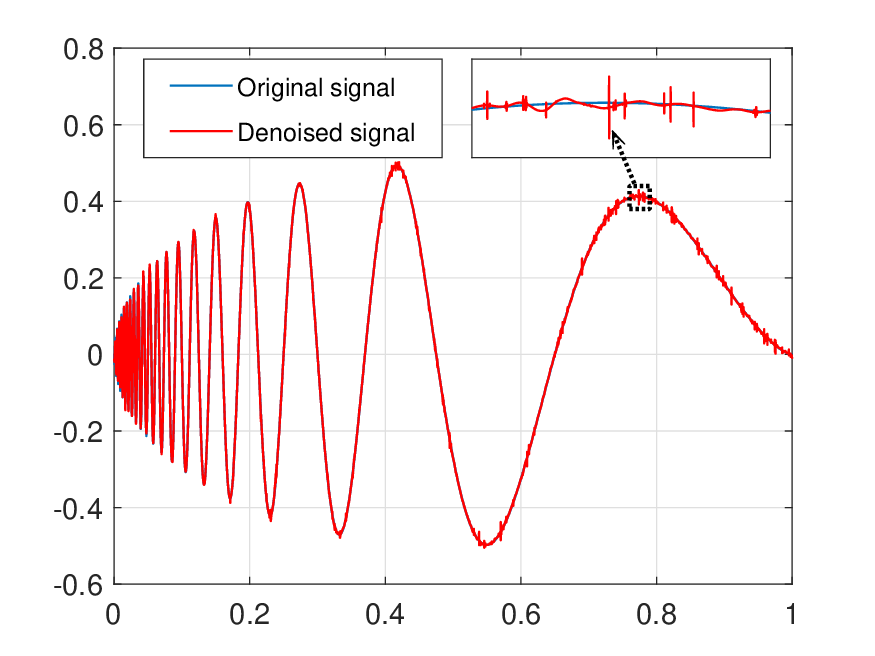} \\[6pt]
{\scriptsize (a)} & {\scriptsize (b)} & {\scriptsize (c)}
\end{tabular}
\caption{Original and denoised signals under sparsity ratios: (a) $0.23\%$; (b) $0.29\%$; (c)  $0.42\%$.}
\label{Orignal_Denoised_Block}
\end{figure}

For the comparison purpose, we also consider the single-parameter regularization model \eqref{lasso_multi-parameter} with $d:=1$. Let 
$$
\gamma_k:=|(\mathbf{A}_{(k)})^{\top}\mathbf{x}|, 
\ \ k\in\mathbb{N}_n,
$$
rearranged in a nondecreasing order: 
$$
\gamma_{k_1}\leq \gamma_{k_2}\leq \cdots \leq \gamma_{k_n}\ \  \mbox{with}\ \ \{k_1,k_2,\cdots,k_n\}=\mathbb{N}_n.
$$
For three TSL values $600$, $750$ and $1100$, we choose $\lambda^*:=\gamma_{k_{n-l^*}}$ according to Corollary \ref{block_separabel_parameter_choice} with $d:=1$. By solving model  \eqref{lasso_multi-parameter} with $d:=1$ via Algorithm \ref{FPPA1} with identical parameters $\alpha$, $\rho$ and $\beta$ as those used in the multi-parameter regularization model, we obtain the solution $\mathbf{u}_{\lambda^*}$ and determine
the actual sparsity level SL of $\mathbf{u}_{\lambda^*}$. The targeted sparsity levels TSL,
the selected values of parameter $\lambda^*$, the actual sparsity levels $\mathrm{SL}$ of  $\mathbf{u}_{\lambda^*}$, the Ratio values of $\mathbf{u}_{\lambda^*}$ and the $\mathrm{MSE}$ values of the denoised signals $\mathbf{A}\mathbf{u}_{\lambda^*}$ are reported in Table \ref{Single_parameter_signal_DWT_block}.
Observing from Tables \ref{Multi_parameter_signal_DWT_block} and   \ref{Single_parameter_signal_DWT_block}, compared with the single-parameter regularization model, the multi-parameter regularization model may provide a solution having better approximation
accuracy with the same sparsity level.

\begin{table}[ht]
\caption{\label{Single_parameter_signal_DWT_block} Single-parameter choices $\lambda^*:=\gamma_{k_{n-l^*}}$ for  model \eqref{lasso_multi-parameter} (block separable)}
\setlength{\tabcolsep}{9.2mm
\begin{tabular}{l||c|c|c}
\hline
TSL &$600$ &$750$ &$1100$   \\   
${\lambda}^*$ ($\times 10^{-2}$) &$10.69$ &$9.80$
&$8.98$\\   
$\mathrm{SL}$ &$600$
&$750$
&$1100$\\ 
Ratio (\%) &$0.23$
&$0.29$
&$0.42$\\
$\mathrm{MSE}$ ($\times10^{-5}$)
&$2.61$ &$2.25$
&$1.94$\\  
\hline
\end{tabular}}
\end{table}

\subsection{Parameter choices: a non-separable case}\label{NM_non_separable}
This experiment is devoted to validating the efficacy of Algorithm \ref{Sparsity-Guided Multi-Parameter Selection Algorithm}. We again consider recovering
the Doppler signal function defined by \eqref{Doppler_signal} from its noisy data by the multi-parameter regularization model \eqref{lasso_multi-parameter}. The original signal $\mathbf{f}$ and the noisy signal $\mathbf{x}$ are chosen in the same way as in Subsection  \ref{NM_block_separable}.

In this experiment, the matrix $\mathbf{A}\in\mathbb{R}^{n\times n}$ is determined by the biorthogonal wavelet `bior2.2' available in Matlab with the coarsest resolution level $\mathrm{L}:=9$. In both the analysis and synthesis filters, `bior2.2' possesses 2 vanishing moments. Such  a matrix does not satisfies conditions \eqref{S_nd_block_separable} and \eqref{S_mj_qj_block_separable}. As a result, we choose the regularization parameters by employing Algorithm \ref{Sparsity-Guided Multi-Parameter Selection Algorithm}. The number $d$ of the regularization parameters, the sub-vectors $\mathbf{u}_j$, $j\in\mathbb{N}_d$ of 
a vector $\mathbf{u}\in\mathbb{R}^n$ and the sub-matrix $\mathbf{A}_{[j]}$, $j\in\mathbb{N}_d$ of $\mathbf{A}$ are all defined as in Subsection  \ref{NM_block_separable}.

We set three prescribed TSLs values $[512$, $17$, $10$, $15$, $2$, $8$, $3$, $8$, $15$, $10]$,  $[512$, $90$, $60$, $10$, $12$, $12$, $18$, $15$, $12$, $9]$ and $[512$, $25$, $10$, $15$, $14$, $17$, $37$, $62$, $146$, $262]$. We choose the parameters $\lambda_j^*$, $j\in\mathbb{N}_d$ by Algorithm \ref{Sparsity-Guided Multi-Parameter Selection Algorithm} with $\bm{\psi}$ defined by \eqref{fidelity_term}, $\mathbf{B}'=\mathbf{I}_n$ and $\epsilon=10$. In Algorithm \ref{Sparsity-Guided Multi-Parameter Selection Algorithm}, we choose the initial parameter $\lambda_j^0=\big\|\mathbf{A}_{[j]}^{\top}{\mathbf{x}}\big\|_{\infty}$, $j\in\mathbb{N}_d$ and solve model \eqref{lasso_multi-parameter} by Algorithm \ref{FPPA1} with $\alpha:=49.50$, $\rho:=0.02$ and $\beta:=0.0001$. The operator $\mathrm{prox}_{\alpha\sum_{j\in\mathbb{N}_d}\lambda_j\|\cdot\|_{1}\circ\mathbf{I}^{'}_{j}}$is given by \eqref{proximity-operator-sum_lambda_j1}, while  $\mathrm{prox}_{\frac{1}{\rho}\bm{\psi}}$ follows from \eqref{Au-2-norm}. Additionally, for the indicator function $\iota_{\mathbb{M}}$, we have that $\mathrm{prox}_{\frac{1}{\beta}\iota_{\mathbb{M}}}=\mathbf{I}_n$. The targeted sparsity levels TSLs, the selected values of parameter $\bm{\lambda}^*$ chosen by Algorithm \ref{Sparsity-Guided Multi-Parameter Selection Algorithm}, the actual sparsity levels $\mathrm{SLs}$ of $\mathbf{u}_{\bm{\lambda}^*}$, the Ratio values of $\mathbf{u}_{\bm{\lambda}^*}$, the numbers $\mathrm{NUM}$ of iterations for $\bm{\lambda}^*$, the computational time $\mathrm{Time(s)}$ and the $\mathrm{MSE}$ values of the denoised signals $\mathbf{A}\mathbf{u}_{\bm{\lambda}^*}$ are reported in Table \ref{Multi_parameter_signal_DWT}. For the three TSLs values, the algorithm reaches the stopping criteria within $5$, $4$ and $10$ iterations, respectively. The SLs values obtained by Algorithm \ref{Sparsity-Guided Multi-Parameter Selection Algorithm} match with the TSLs values within tolerance error $\epsilon=10$. The numerical results in
Table \ref{Multi_parameter_signal_DWT}  validate the
efficacy of Algorithm \ref{Sparsity-Guided Multi-Parameter Selection Algorithm} for obtaining regularization parameters leading to a solution with
desired sparsity level and approximation error. Figure \ref{Orignal_Noisy_Block} compares the original signal with its noisy version. Using the three TSLs listed in Table \ref{Multi_parameter_signal_DWT}, Figure \ref{Orignal_Noisy_Nonseparable} shows the denoised signals obtained from model \eqref{lasso_multi-parameter} with regularization parameters selected via Algorithm \ref{Sparsity-Guided Multi-Parameter Selection Algorithm}.


\begin{table}[ht]
\caption{\label{Multi_parameter_signal_DWT} Multi-parameter choices by Algorithm \ref{Sparsity-Guided Multi-Parameter Selection Algorithm} for model \eqref{lasso_multi-parameter} (nonseparable)}
\begin{tabular}{l||c|c|c}
\hline
$\mathrm{TSLs}$ 
&$[512,17,10,15,2,$  &$[512,90,60,10,12,$ &$[512,25,10,15,14,$  \\  
&$8,3,8,15,10]$ &$12,18,15,12,9]$ &$17,37,62,146,262]$ \\     
${\bm{\lambda}}^*$ ($\times10^{-2}$) 
&$[0.12,27.18,17.52,7.87,8.29,$ &$[0.08,4.28,5.14,7.38,7.41,$ &$[0.12,16.06,14.73,7.36,7.40,$  \\ 
&$8.28,9.75,9.32,10.41,13.75]$ &$8.05,8.45,9.11,10.65,13.75]$ &$7.60,7.78,8.10,8.71,10.91]$ \\
$\mathrm{SLs}$  
&$[512,15,8,9,2,$  
&$[511,89,59,9,12,$ 
&$[512,25,8,15,14,$  \\  
&$8,3,8,15,10]$ &$11,18,15,12,9]$ &$16,37,62,146,262]$\\ 
Ratio (\%) &$0.23 $ &$0.28 $ &$0.42 $ \\
$\mathrm{NUM}$  &$5$ &$4$ &$10$  \\
Time (s) &$61$ &$47$ &$125$  \\
$\mathrm{MSE}$ ($\times10^{-6}$)  &$13.13$ &$3.69$ &$8.68$  \\  
\hline
\end{tabular}
\end{table}

\begin{figure}[h]
\centering
\begin{tabular}{c@{\hspace{5pt}}c@{\hspace{5pt}}c}
\includegraphics[width=5cm,height=4cm]{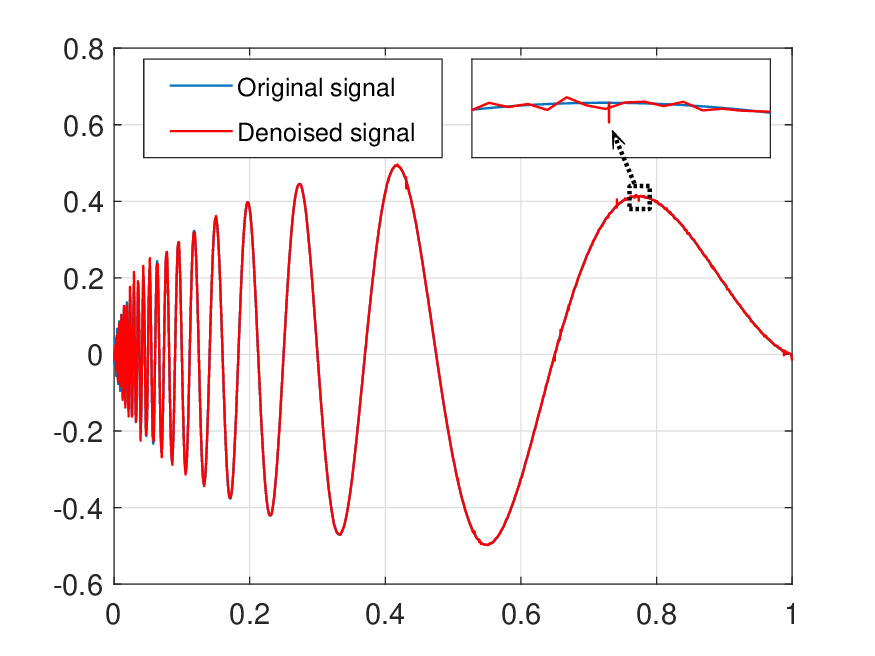} &
\includegraphics[width=5cm,height=4cm]{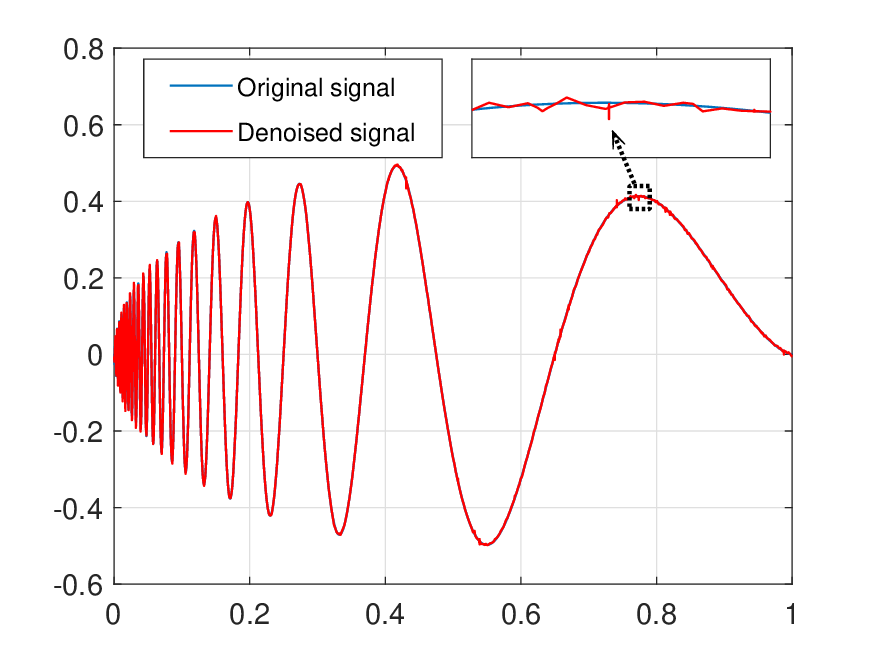} &
\includegraphics[width=5cm,height=4cm]{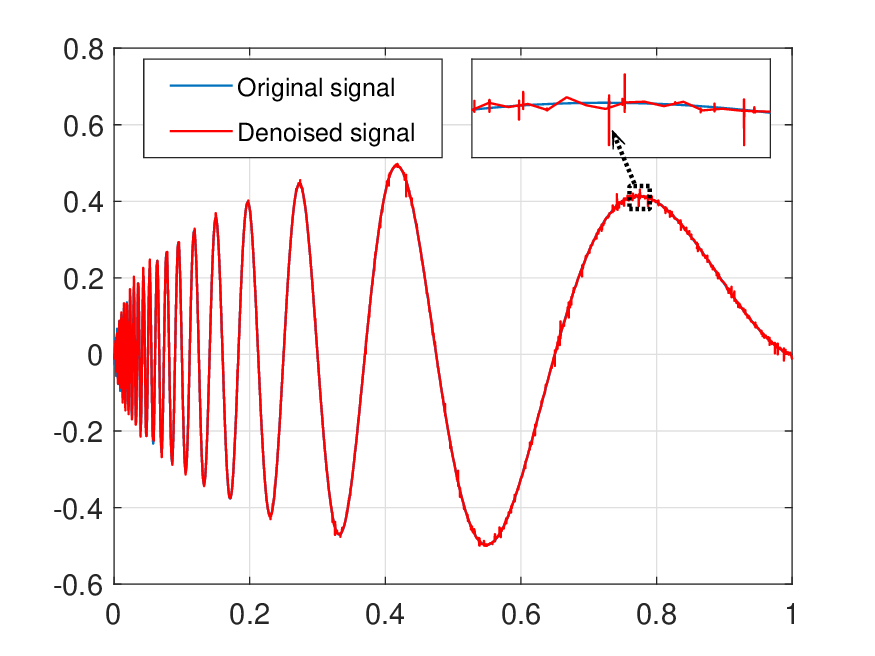} \\[6pt]
{\scriptsize (a)} & {\scriptsize (b)} & {\scriptsize (c)}
\end{tabular}
\caption{Original and denoised signals under sparsity ratios: (a) $0.23\%$; (b) $0.28\%$; (c)  $0.42\%$.}
\label{Orignal_Noisy_Nonseparable}
\end{figure}

\subsection{Compound sparse denoising}\label{Compound sparse denoising}
We consider in this experiment parameters choices for the compound sparse denoising regularization model \eqref{CSD} implemented by Algorithm \ref{Sparsity-Guided Multi-Parameter Selection Algorithm}. In this case, the fidelity term $\bm{\psi}$ defined by \eqref{fidelity-CSD} is differentiable and the transform matrix $\mathbf{B}:=\scalebox{0.8}{$\begin{bmatrix}
\mathbf{I}_n\\
\mathbf{D}\end{bmatrix}$}$ satisfies $rank(\mathbf{B})=n$. Clearly, $\mathbf{B}$ does not have full row rank.

In this experiment, the noisy data $\mathbf{y}:=[y(t):t\in\mathbb{N}_{n}]$ with $n=300$ is generated by adding a low-frequency sinusoid signal $f(t):=\sin(0.021\pi t)$, two additive step discontinuities $u(t)=2$ for  $t<0.3n$, $1$ for $t>0.6n$ and $0$ otherwise, and additive white Gaussian noise $\eta(t)$ with 0.3 standard deviation and zero mean. Set $\mathbf{f}:=[f(t):t\in\mathbb{N}_{n}]$ and $\mathbf{u}:=[u(t):t\in\mathbb{N}_{n}]$. We estimate $\mathbf{f}$ and $\mathbf{u}$ from the noisy data $\mathbf{y}$ by using simultaneous low-pass filtering and compound
sparse denoising. The estimate $\mathbf{u}^*$ of $\mathbf{u}$ is obtained by solving the compound
sparse denoising model \eqref{CSD}, where the $(n-4)\times n$ high-pass filter matrix $\mathbf{H}$ 
is chosen as in Example A of \cite{Selesnick2014Simultaneous}.
The estimate $\mathbf{f}^*$ of  $f$ is then obtained by  $\mathbf{f}^*=(\tilde{\mathbf{I}}-\mathbf{H})(\mathbf{y}-\mathbf{u}^*)$, where $\tilde{\mathbf{I}}$ is the identity matrix of order $n$ with the first $2$ and last $2$ rows removed. To measure the filtering effect, we define the $\mathrm{MSE}$ by
$$
\mathrm{MSE}:=\frac{1}{n-4}\big\|\tilde{\mathbf{I}}(\mathbf{f}+\mathbf{u})-\mathbf{f}^*-\tilde{\mathbf{I}}\mathbf{u}^*\big\|_2^2.
$$
 
We employ Algorithm \ref{Sparsity-Guided Multi-Parameter Selection Algorithm} to select multiple regularization parameters that ensures
the resulting solution of model \eqref{CSD} achieves given TSLs. In Algorithm \ref{Sparsity-Guided Multi-Parameter Selection Algorithm}, we solve model \eqref{optimization_problem_under_Bj_3} by Algorithm \ref{FPPA1} where $d=2$, $\bm{\psi}$ has the form \eqref{fidelity-CSD}, 
$\mathbf{B}_{1}$ is the identity matrix of order $n$ and $\mathbf{B}_{2}$ is the $(n-1)\times n$ first order difference matrix. 
To ensure the convergence of Algorithm \ref{FPPA1}, we set $\alpha:=0.1$, $\rho:=4$ and $\beta:=4$. The closed-form formulas of the proximity operators $\mathrm{prox}_{\alpha\sum_{j\in\mathbb{N}_d}\lambda_j\|\cdot\|_{1}\circ\mathbf{I}^{'}_{j}}$, $\mathrm{prox}_{\frac{1}{\beta}\iota_{\mathbb{M}}}$ and $\mathrm{prox}_{\frac{1}{\rho}\bm{\psi}}$ with $\bm{\psi}$ being defined by  \eqref{fidelity-CSD} are given in \eqref{proximity-operator-sum_lambda_j1}, \eqref{indicator_prox} and \eqref{2-norm-2}, respectively. 

We set four prescribed $\mathrm{TSLs}$ values $[20,30]$, $[80,60]$, $[115,50]$ and $[110,90]$ and choose the regularization parameters $\lambda_j^*$, $j\in\mathbb{N}_d$ by employing Algorithm \ref{Sparsity-Guided Multi-Parameter Selection Algorithm} with $\epsilon=2$. We report in Table \ref{filtering_problem} the targeted sparsity levels $\mathrm{TSLs}$, the initial values of $\bm{\lambda}^0$, the selected values of parameter $\bm{\lambda}^*$ chosen by Algorithm \ref{Sparsity-Guided Multi-Parameter Selection Algorithm}, the actual sparsity levels $\mathrm{SLs}$ of $\mathbf{u}^*$, the Ratios values of $\mathbf{u}^*$, the numbers $\mathrm{NUM}$ of iterations for $\bm{\lambda}^*$, the computational time $\mathrm{Time(s)}$ and the $\mathrm{MSE}$ values of the filtered signal. For the five TSLs values, the algorithm meets the stopping criteria after $6$, $13$, $15$, and $5$ iterations, respectively. The SLs values obtained by Algorithm \ref{Sparsity-Guided Multi-Parameter Selection Algorithm} match
with the TSLs values within tolerance error $\epsilon=2$. The numerical results in
Table \ref{filtering_problem} demonstrate the efficacy of Algorithm \ref{Sparsity-Guided Multi-Parameter Selection Algorithm} in selecting multiple regularization parameters
to achieve desired sparsity levels of the solution. Figure \ref{Orignal_Noisy_CSD} compares the original signal with its noisy counterpart. Using the four TSLs listed in Table \ref{filtering_problem}, Figure \ref{Orignal_Denoised_CSD} shows the denoised signals obtained from model \eqref{CSD} with regularization parameters selected via Algorithm \ref{Sparsity-Guided Multi-Parameter Selection Algorithm}.

\begin{table}[ht]
\caption{\label{filtering_problem} Multi-parameter choices by Algorithm \ref{Sparsity-Guided Multi-Parameter Selection Algorithm} for model \eqref{CSD}}
\setlength{\tabcolsep}{5mm
\begin{tabular}{l||c|c|c|c}
\hline
TSLs  &$[20,30]$  &$[80,60]$ &$[115,50]$ &$[110,90]$
\\  
$\bm{\lambda}^0$ &$[0.5,0.08]$ &$[0.5,1.0]$ &$[0.07,0.4]$
&$[0.2,0.4]$\\ 
$\bm{\lambda}^*$
&$[0.4854,0.0675]$  &$[0.1982,0.1742]$ 
&$[0.0668,0.3793]$  &$[0.1492,0.1637]$
\\ 
$\mathrm{SLs}$  &$[20,29]$ 
&$[79,59]$ &$[116,51]$ &$[109,89]$
\\ 
Ratios (\%)
&$[6.67,9.70]$  &$[26.33,19.73]$
&$[38.67,17.06]$
&$[36.33,29.77]$
\\
$\mathrm{NUM}$  &$6$ 
&$13$ &$15$ &$5$\\ 
Time (s) &$11$ &$22$ &$29$ &$9$\\
$\mathrm{MSE}$ ($\times 10^{-2}$)
&$2.51$  
&$1.58$ 
&$1.10$
&$1.59$
\\  \hline
\end{tabular}}
\end{table}


\begin{figure}[h]
\centering
\begin{tabular}{c@{\hspace{15pt}}c}
\includegraphics[width=5cm,height=4cm]{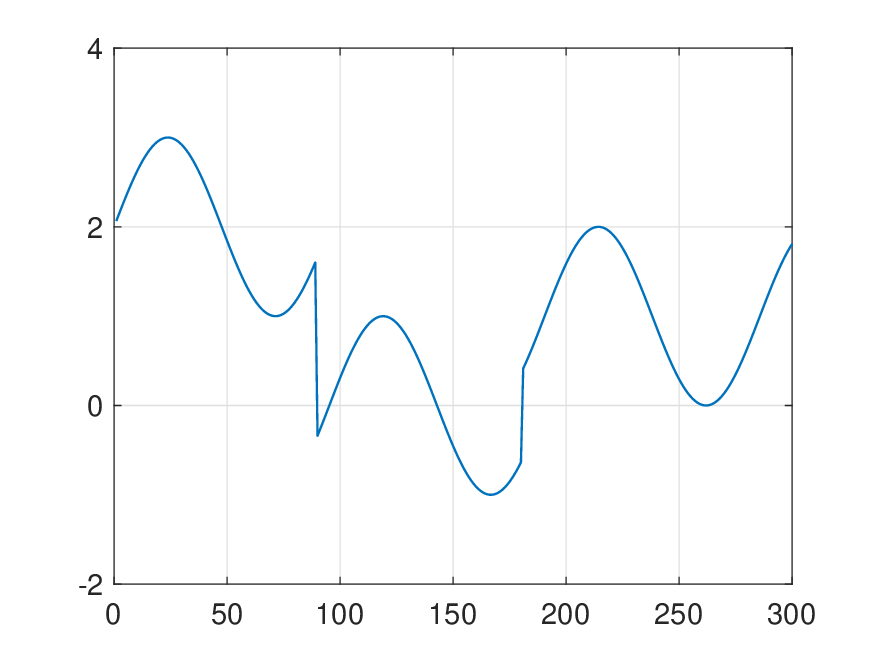}
&\includegraphics[width=5cm,height=4cm]{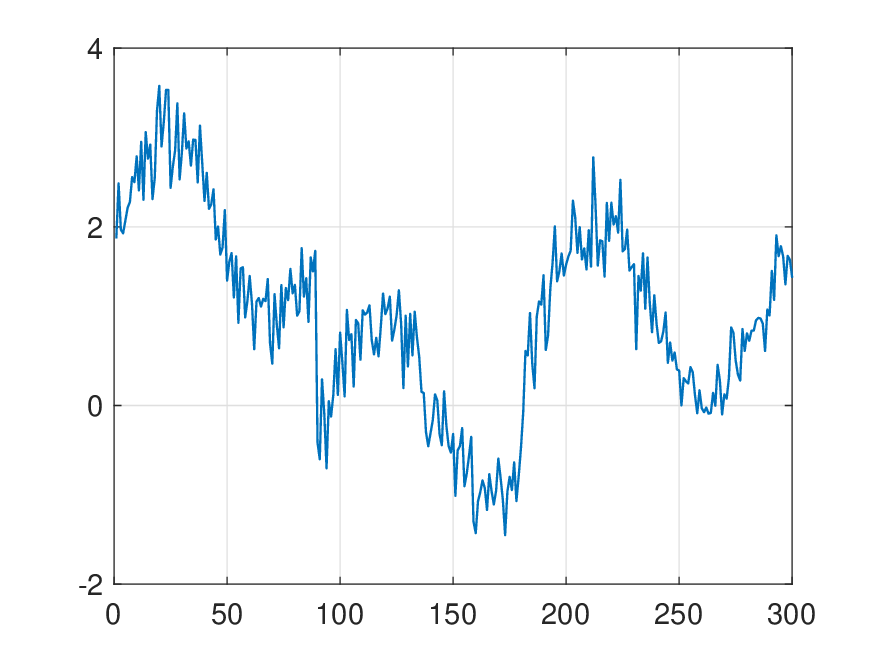}
\\
{\scriptsize (a)} & {\scriptsize (b)} 
\end{tabular}
\\
\caption{(a) Original signal; (b) Noisy signal.} 
\label{Orignal_Noisy_CSD}
\end{figure}

\begin{figure}[h]
\centering
\begin{tabular}{c@{\hspace{15pt}}c}
\includegraphics[width=5cm,height=4cm]{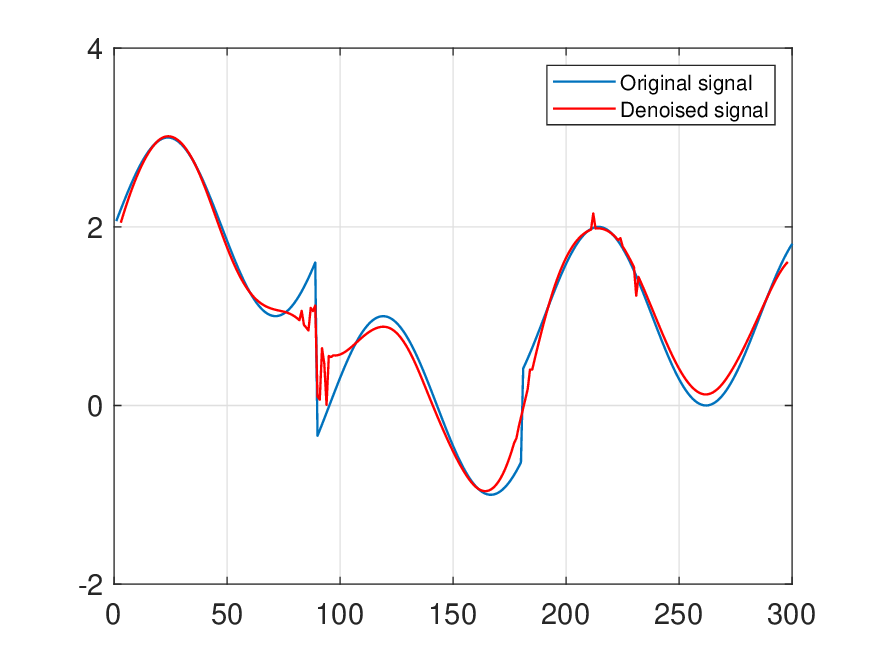} &
\includegraphics[width=5cm,height=4cm]{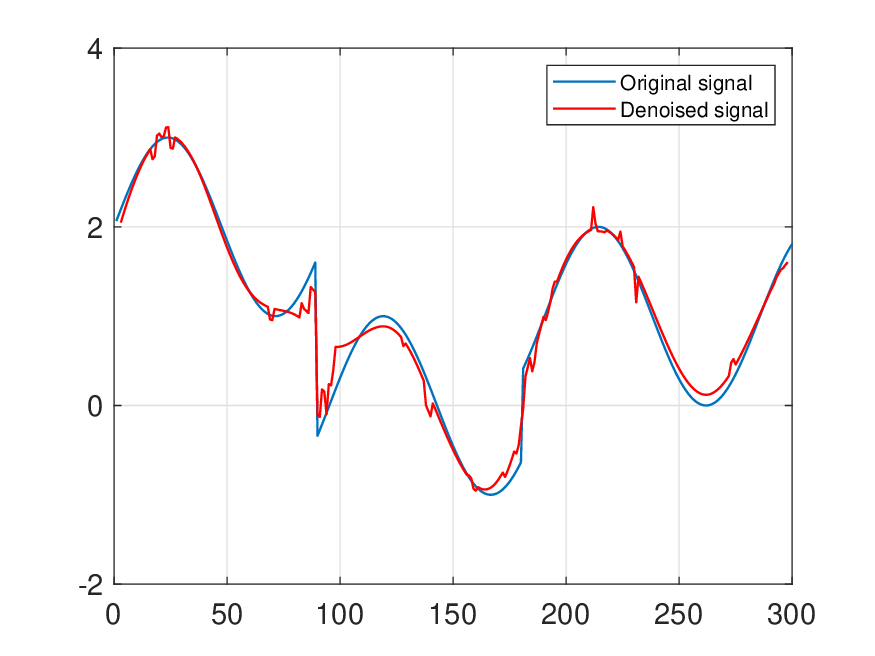} \\[6pt]
{\scriptsize (a)} & {\scriptsize (b)} \\
\includegraphics[width=5cm,height=4cm]{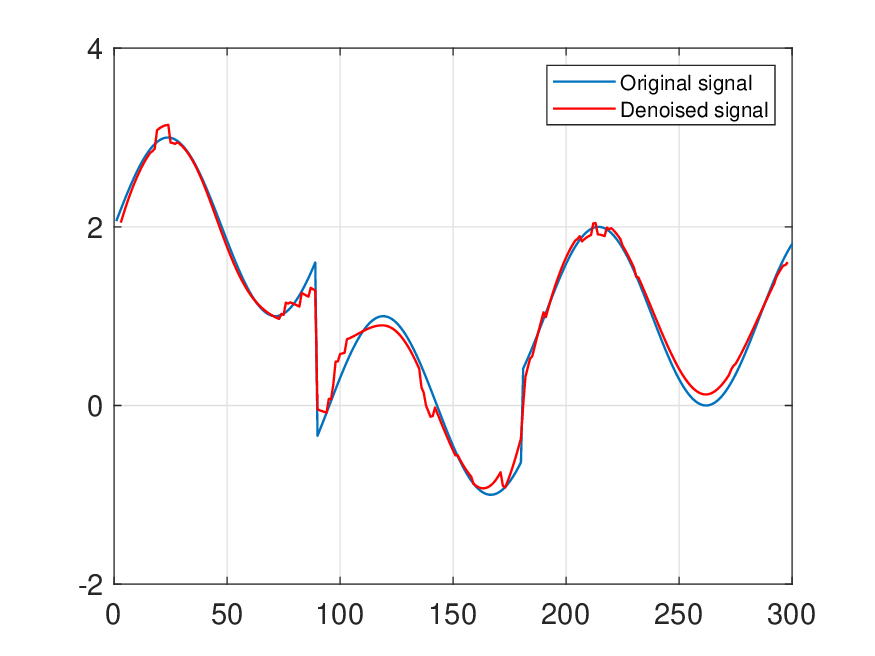} &
\includegraphics[width=5cm,height=4cm]{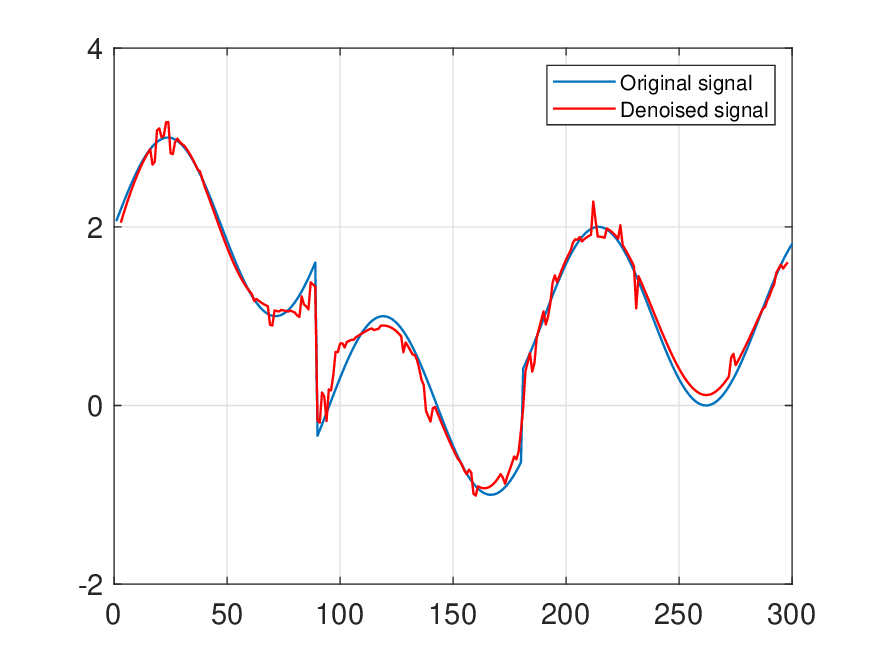} \\[6pt]
{\scriptsize (c)} & {\scriptsize (d)} \\
\end{tabular}
\caption{Original and denoised signals under sparsity ratios: (a)  $[6.67\%,9.70\%]$; 
(b) $[26.33\%,19.73\%]$; 
(c) $[38.67\%,17.06\%]$; 
(d) $[36.33\%,29.77\%]$.}
\label{Orignal_Denoised_CSD}
\end{figure}

\subsection{Fused SVM}\label{Fused SVM}
The goal of this experiment is to validate parameters choices for the
fused SVM model \eqref{fused-SVM} implemented by Algorithm \ref{Sparsity-Guided Multi-Parameter Selection Algorithm}. In model \eqref{fused-SVM}, the fidelity term  $\bm{\psi}(\mathbf{u}):=\bm{\phi}(\mathbf{YXu})$ is not differential and the transform matrix $\mathbf{B}:=\scalebox{0.8}{$\begin{bmatrix}
\mathbf{I}_n\\
\mathbf{D}\end{bmatrix}$}$ does not have full row rank.

The dataset utilized for this experiment is the set of handwriting digits sourced from the modified national institute of standards and technology (MNIST) database \cite{lecun1998gradient}. The original MNIST database consists of 60000 training samples and 10000 testing samples of the digits `0' through `9'. We consider the binary classification problem with
two digits `7' and `9', by taking 12214 training samples and 2037 testing samples of these two digits from the database. Let $p:=12214$ be the number of training samples and $\{y_j:j\in\mathbb{N}_p\}\subset\{-1,1\}$ be the labels of training data in which -1 and 1 represent the digits `7' and `9', respectively. In addition, let $n:=784$ be the the number of pixels in each sample.

We implement Algorithm  \ref{Sparsity-Guided Multi-Parameter Selection Algorithm} to select multiple regularization parameters with which the resulting solution of model \eqref{fused-SVM} achieves given TSLs. In Algorithm  \ref{Sparsity-Guided Multi-Parameter Selection Algorithm}, we solve model \eqref{optimization_problem_under_Bj_3} by Algorithm \ref{FPPA1} where $d=2$, $\bm{\psi}(\mathbf{u}):=\bm{\phi}(\mathbf{YXu})$, $\mathbf{B}_{1}$ is the identity matrix of order $n$ and $\mathbf{B}_{2}$ is the $(n-1)\times n$ first order difference matrix. It is worth noting that the proximity operator of function $\frac{1}{\rho}\bm{\psi}$ can not be expressed explicitly. Instead, we solve model \eqref{optimization_problem_under_Bj_3} by Algorithm \ref{FPPA1} with $\bm{\psi}$ and $\mathbf{B}'$ being replaced by $\bm{\phi}$ and $\mathbf{YX}\mathbf{B}'$, respectively, and obtain at step $k$ three vectors $\widetilde{\mathbf{w}}^k$, $\widetilde{\mathbf{a}}^k$ and $\widetilde{\mathbf{c}}^k$. Then the vectors $\mathbf{w}^k$, $\hat{\mathbf{a}}^k$ and $\mathbf{c}^k$ emerging in Algorithm \ref{Sparsity-Guided Multi-Parameter Selection Algorithm} can be obtained by $\mathbf{w}^k:=\widetilde{\mathbf{w}}^{k}$, $\hat{\mathbf{a}}^k:=(\mathbf{YX})^{\top}\widetilde{\mathbf{a}}^k$ and $\mathbf{c}^k:=\widetilde{\mathbf{c}}^k$. In this experiment, we choose $\alpha:=0.001$, $\rho:=0.002$ and $\beta:=10$ to guarantee the convergence of Algorithm \ref{FPPA1}. 
The closed-form formulas of the proximity operators $\mathrm{prox}_{\alpha\sum_{j\in\mathbb{N}_d}\lambda_j\|\cdot\|_{1}\circ\mathbf{I}^{'}_{j}}$, $\mathrm{prox}_{\frac{1}{\beta}\iota_{\mathbb{M}}}$ and $\mathrm{prox}_{\frac{1}{\rho}\bm{\phi}}$ are given in \eqref{proximity-operator-sum_lambda_j1}, \eqref{indicator_prox} and \eqref{max-function}, respectively.  

We set five prescribed TSLs values $[80,80]$, $[90,110]$, $[150,120]$, $[280,200]$ and $[360,400]$ and use Algorithm \ref{Sparsity-Guided Multi-Parameter Selection Algorithm} with $\epsilon=2$ to select the regularization parameters $\lambda_j^*$,  $j\in\mathbb{N}_d$. The targeted sparsity levels $\mathrm{TSLs}$, the initial values of $\bm{\lambda}^0$, the selected values of parameter $\bm{\lambda}^*$ chosen by Algorithm \ref{Sparsity-Guided Multi-Parameter Selection Algorithm}, the actual sparsity levels $\mathrm{SLs}$ of $\mathbf{u}^*$, the Ratios values of $\mathbf{u}^*$, the numbers $\mathrm{NUM}$ of iterations for $\bm{\lambda}^*$, the computational time $\mathrm{Time(s)}$, the accuracy on the training datasets ($\mathrm{TrA}$) and the accuracy on the testing datasets ($\mathrm{TeA}$) are reported in Table \ref{fused_SVM_experiment}.  Algorithm  \ref{Sparsity-Guided Multi-Parameter Selection Algorithm} meets the stopping criteria $\epsilon=2$ within 12, 5, 14, 13 and 5 iterations
for the five TSLs values, respectively. The SLs values obtained by Algorithm \ref{Sparsity-Guided Multi-Parameter Selection Algorithm} match
with the TSLs values within tolerance error $\epsilon=2$. These results validate the
effectiveness of Algorithm \ref{Sparsity-Guided Multi-Parameter Selection Algorithm} for obtaining multiple regularization parameters leading to a solution with 
desired sparsity levels.

\begin{table}[ht]
\caption{\label{fused_SVM_experiment} Multi-parameter choices by Algorithm \ref{Sparsity-Guided Multi-Parameter Selection Algorithm} for model \eqref{fused-SVM}}
\begin{tabular}{l||c|c|c|c|cc}
\hline
TSLs  &$[80,80]$ &$[90,110]$
&$[150,120]$  &$[280,200]$ &$[360,400]$\\  
$\bm{\lambda}^0$ &$[170,13]$ &$[110,3]$
&$[42,9]$  &$[5,4]$ &$[1,0.2]$\\ 
$\bm{\lambda}^*$
&$[169.9511,12.9933]$  
&$[109.6491,2.9996]$ &$[41.9924,8.9851]$  
&$[4.8402,3.8580]$  
&$[0.9076,0.1747]$ \\ 
$\mathrm{SLs}$  &$[80,79]$  &$[89,109]$ &$[150,118]$ 
&$[280,199]$ &$[359,399]$\\ 
Ratios (\%)   &$[10.20, 10.09] $ &$[11.35,13.92] $ &$[19.13, 15.07]$ &$[35.71, 25.42]$ &$[45.79, 50.96] $\\
$\mathrm{NUM}$  &$12$  &$5$ &$14$ 
&$13$ &$5$\\ 
Time ($\times 10^{3}$ s)  &$6.0544$  &$2.5240$ &$7.0732$ 
&$6.5576$ &$2.5257$\\ 
{TrA} (\%)  &$93.59$ &$94.51$ &$95.15 $  
&$96.50$ &$ 97.32$ \\
{TeA} (\%)  &$94.65$ &$95.24$ &$95.93$  
&$96.51$ &$96.86$ \\
\hline
\end{tabular}
\end{table}

The numerical experiments demonstrate the effectiveness of our proposed algorithms in applications such as signal denoising and classification. Notably, Subsections \ref{NM_block_separable} and \ref{NM_non_separable} involve large-scale signal processing problems with $n=262144$ data points, highlighting our method's capability to handle massive datasets efficiently. Despite the large data sizes, our algorithm achieves the target sparsity levels with reasonable computational time, as shown in Table \ref{Multi_parameter_signal_DWT}. In contrast to the large-scale datasets in Subsection \ref{NM_non_separable}, the dataset used in Subsection \ref{Compound sparse denoising} is relatively small. However, the non-sparse nature of the high-pass filter matrix $\mathbf{H}$ induces high computational complexity in evaluating the proximity operator $\mathrm{prox}_{\frac{1}{\rho}\bm{\psi}}$, leading to significantly increased runtime during parameter selection. The experiment in Subsection \ref{Fused SVM} exhibits significantly longer runtimes compared to those in Subsections \ref{NM_non_separable} and \ref{Compound sparse denoising}, despite using comparable iteration counts. This is because each iteration step requires solving an equivalent fused SVM model with the hinge loss function using the fixed-point proximity algorithm. The non-smoothness and non-strong convexity of the hinge loss function result in slow convergence of the fixed-point proximity algorithm.

\subsection{Numerical verification of Assumptions (A1)--(A3)}

This subsection provides numerical and heuristic evidence supporting the validity of Assumptions (A1)--(A3) within the experimental framework established in Subsection \ref{Compound sparse denoising}. We maintain the configuration of the compound sparse denoising model \eqref{CSD}, utilizing the same observed noisy signal $\mathbf{y}$ and high-pass filter matrix $\mathbf{H}$.

To select the regularization parameters, we apply Algorithm \ref{Sparsity-Guided Multi-Parameter Selection Algorithm} with $\epsilon=0$ and an initial $\bm{\lambda}^0 = [0.6, 0.730]$. The target sparsity levels (TSLs) are set at $[20, 30]$. All algorithmic components remain unchanged from the previous experiment, including the subproblem solver (Algorithm \ref{FPPA1} configured with $d=2$, $\alpha=0.1$, $\rho=4$, and $\beta=4$). Figure \ref{Iter20_30} illustrates the evolution of the parameters and the resulting sparsity levels over seven iterations, while Figure \ref{Iterative_deviation} tracks the deviations of the quantities $\gamma_{j,i_s^k}(\mathbf{u}^k)$ from their terminal values.

\subsubsection*{Analysis of Assumptions}

\begin{itemize}
    \item \textbf{Verification of Assumption (A1):} 
    As shown in Figure \ref{Iter20_30}(a), the sparsity level (SL) for $\lambda_1$ increases monotonically from 11 to 20, never exceeding the target. Similarly, Figure \ref{Iter20_30}(b) shows the SL for $\lambda_2$ rising from 14 to 30. Throughout the process, the condition $l_j^k \leq l_j^*$ holds at every iteration $k$, providing direct empirical confirmation of Assumption (A1).

    \item \textbf{Support for Assumption (A2):} 
    The sequences $\{\lambda_1^k\}$ and $\{\lambda_2^k\}$ in Figure \ref{Iter20_30} exhibit clear monotonicity (strictly decreasing) and remain bounded below by zero. Furthermore, the update magnitude $|\lambda_j^{k+1} - \lambda_j^k|$ diminishes as iterations progress. This behavior---monotonicity, boundedness, and diminishing updates---is characteristic of a convergent sequence. Although the algorithm terminates upon reaching the TSLs, the unambiguous pre-convergence trend strongly suggests that the sequences $\{\lambda_j^k\}$ would converge to a positive limit $\lambda_j > 0$, supporting (A2).

    \item \textbf{Evidence for Assumption (A3):} 
    Figure \ref{Iterative_deviation} displays the absolute differences $|\gamma_{j,i_s^k}(\mathbf{u}^k) - \gamma_{j,i_s^7}(\mathbf{u}^7)|$ for $j=1,2$. These differences decrease rapidly across all indices $i_s$ as $k$ increases. This trend indicates that  for each $j$ and $s$, the sequence $\{\gamma_{j,i_s^k}(\mathbf{u}^k)\}$ converges to $\gamma_{j,i_s^7}(\mathbf{u}^7)$. Treating the terminal solution $\mathbf{u}^7$ as a candidate limit point,  this consistent convergence provides strong heuristic support for the plausibility of Assumption (A3).
\end{itemize}

\begin{figure}[h]
\centering
\begin{tabular}{cc}
\includegraphics[width=6.5cm]{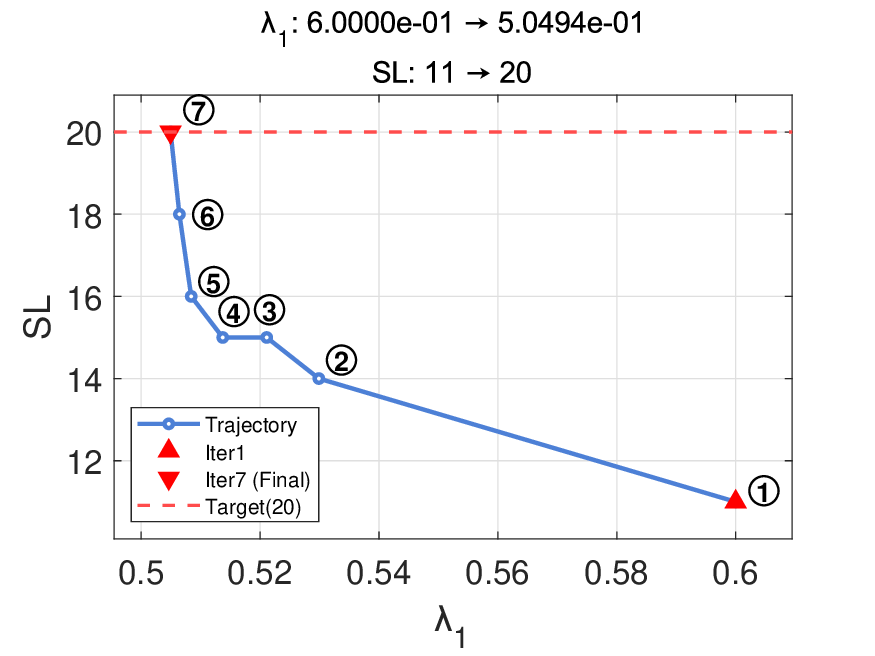} & 
\includegraphics[width=6.5cm]{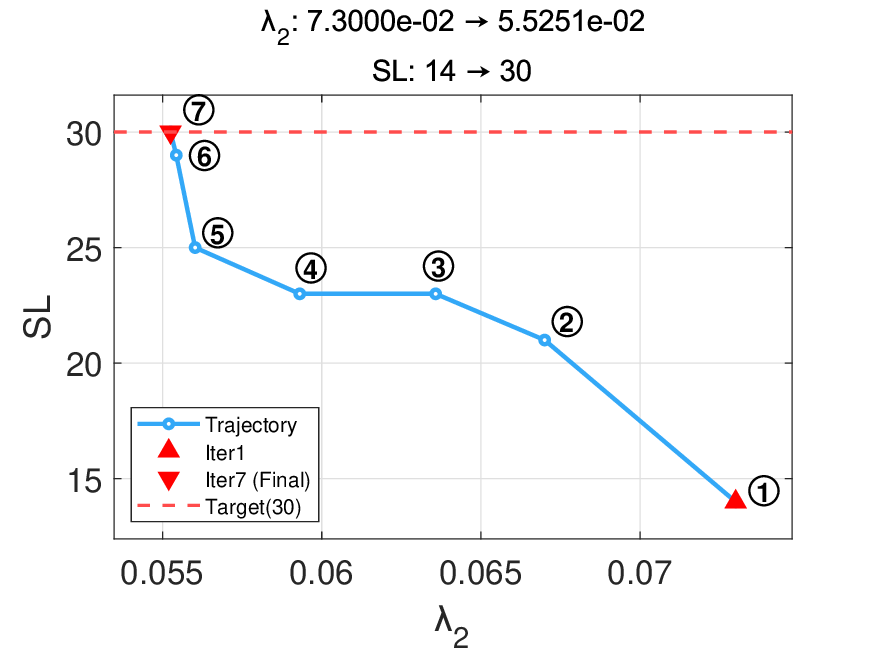} \\
{\scriptsize (a)} & {\scriptsize (b)}
\end{tabular}
\caption{(a) Evolution of $\lambda_1$ and (b) $\lambda_2$ to achieve the TSLs of $[20, 30]$.}
\label{Iter20_30}
\end{figure}

\begin{figure}[h]
\centering
\begin{tabular}{cc}
\includegraphics[width=6.5cm]{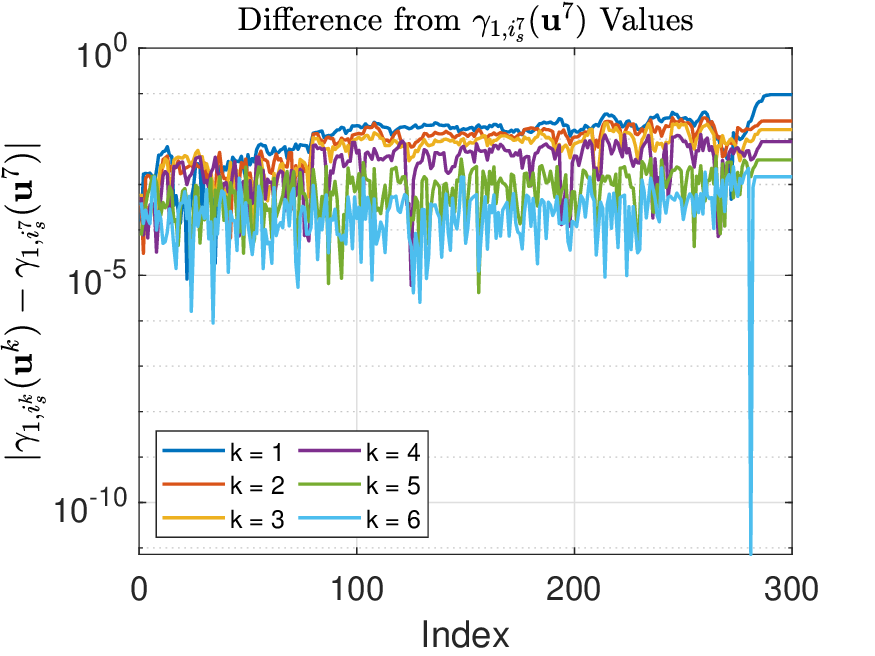} & 
\includegraphics[width=6.5cm]{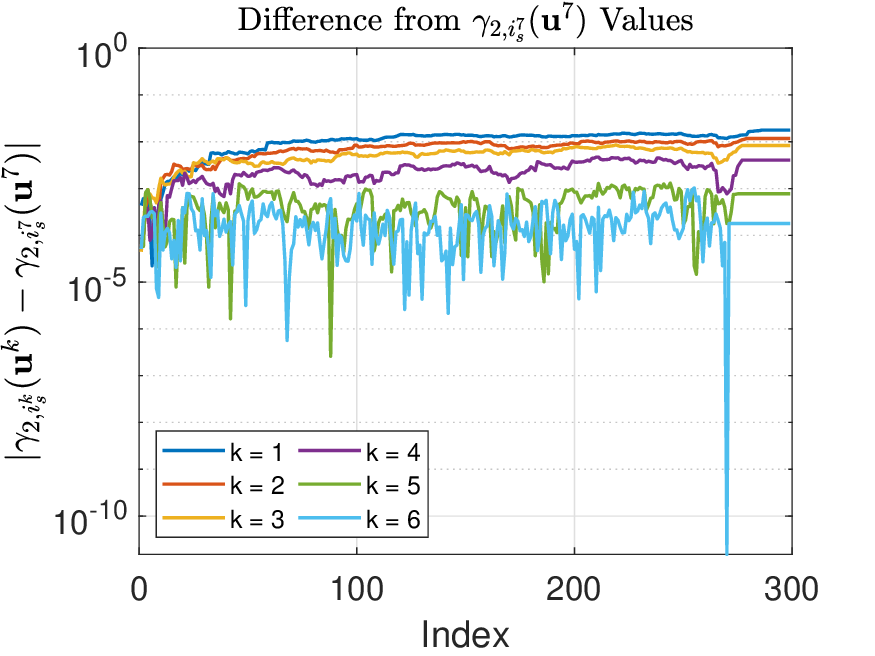} \\
{\scriptsize (a)} & {\scriptsize (b)}
\end{tabular}
\caption{Iterative deviation of the quantities (a) $\gamma_{1,i_s^7}(\mathbf{u}^7)$ and (b) $\gamma_{2,i_s^7}(\mathbf{u}^7)$ from the terminal solution.}
\label{Iterative_deviation}
\end{figure}

\subsubsection*{Support Set Evolution and Convergence Insights}

To further investigate the convergence properties of Algorithm \ref{Sparsity-Guided Multi-Parameter Selection Algorithm}, we analyze the evolution of the support sets. We define the support of a vector $\mathbf{x} \in \mathbb{R}^s$ as $\mathrm{supp}(\mathbf{x}) := \{i \in \mathbb{N}_s : x_i \neq 0\}$. Table \ref{Inclusion_relationship} details the support sets for $\mathbf{u}^k$ and $\mathbf{Du}^k$ across iterations, where bold indices indicate elements newly included in the support set compared to the previous iteration.

\begin{itemize}
    \item \textbf{Monotonic Inclusion:} The support sets exhibit a nested structure, satisfying $\mathrm{supp}(\mathbf{u}^k) \subseteq \mathrm{supp}(\mathbf{u}^{k+1})$ and $\mathrm{supp}(\mathbf{D}\mathbf{u}^k) \subseteq \mathrm{supp}(\mathbf{D}\mathbf{u}^{k+1})$.
    \item \textbf{Synchronized Growth:} The expansion of the support sets corresponds with the increase in SLs from $[11, 14]$ to $[20, 30]$, demonstrating that the algorithm effectively controls the sparsity structure.
    \item \textbf{Coupled Sparsity:} Updates are often correlated; for instance, index 25 is added to both $\mathrm{supp}(\mathbf{u}^k)$ and $\mathrm{supp}(\mathbf{D}\mathbf{u}^k)$ at iteration 5.
\end{itemize}

While this nested inclusion structure is not a universal theoretical guarantee, its presence in our numerical tests provides valuable insight into the algorithm's stability and informs the theoretical analysis of well-behaved scenarios.

\begin{table}[ht]
\centering
\caption{Evolution and inclusion relationship of support sets across iterations}
\label{Inclusion_relationship}
\begin{tabular}{c||c|p{4.5cm}|p{6cm}}
\hline
\textbf{Iter.} $k$ & \textbf{SLs} & \centering $\mathrm{supp}(\mathbf{u}^k)$ & \centering $\mathrm{supp}(\mathbf{D}\mathbf{u}^k)$ \tabularnewline \hline\hline
1 & [11, 14] & 2, 87, 88, 89, 90, 91, 93, 94, 212, 231, 300 & 1, 2, 86, 88, 89, 91, 92, 93, 94, 211, 212, 230, 231, 299 \\ \hline
2 & [14, 21] & 2, \textbf{83}, 87, 88, 89, 90, 91, 93, 94, \textbf{184}, 212, \textbf{224}, 231, 300 & 1, 2, \textbf{82}, \textbf{83}, 86, 88, 89, \textbf{90}, 91, 92, 93, 94, \textbf{183}, \textbf{184}, 211, 212, \textbf{223}, \textbf{224}, 230, 231, 299 \\ \hline
3 & [15, 23] & 2, 83, 87, 88, 89, 90, 91, 93, 94, \textbf{178}, 184, 212, 224, 231, 300 & 1, 2, 82, 83, 86, 88, 89, 90, 91, 92, 93, 94, \textbf{177}, \textbf{178}, 183, 184, 211, 212, 223, 224, 230, 231, 299 \\ \hline
4 & [15, 23] & 2, 83, 87, 88, 89, 90, 91, 93, 94, 178, 184, 212, 224, 231, 300 & 1, 2, 82, 83, 86, 88, 89, 90, 91, 92, 93, 94, 177, 178, 183, 184, 211, 212, 223, 224, 230, 231, 299 \\ \hline
5 & [16, 25] & 2, \textbf{25}, 83, 87, 88, 89, 90, 91, 93, 94, 178, 184, 212, 224, 231, 300 & 1, 2, \textbf{24}, \textbf{25}, 82, 83, 86, 88, 89, 90, 91, 92, 93, 94, 177, 178, 183, 184, 211, 212, 223, 224, 230, 231, 299 \\ \hline
6 & [18, 29] & 2, 25, \textbf{70}, 83, 87, 88, 89, 90, 91, 93, 94, \textbf{173}, 178, 184, 212, 224, 231, 300 & 1, 2, 24, 25, \textbf{69}, \textbf{70}, 82, 83, 86, 88, 89, 90, 91, 92, 93, 94, \textbf{172}, \textbf{173}, 177, 178, 183, 184, 211, 212, 223, 224, 230, 231, 299 \\ \hline
7 & [20, 30] & 2, 25, 70, 83, 87, 88, 89, 90, 91, 93, 94, \textbf{95}, \textbf{96}, 173, 178, 184, 212, 224, 231, 300 & 1, 2, 24, 25, 69, 70, 82, 83, 86, 88, 89, 90, 91, 92, 93, 94, \textbf{96}, 172, 173, 177, 178, 183, 184, 211, 212, 223, 224, 230, 231, 299 \\ \hline
\end{tabular}
\end{table}

\subsection{Sensitivity analysis of hyperparameters}

This subsection evaluates the sensitivity of Algorithm \ref{Sparsity-Guided Multi-Parameter Selection Algorithm} to two critical hyperparameters: the initial regularization vector $\bm{\lambda}^0$ and the stopping threshold $\epsilon$. Utilizing the experimental setup defined in Subsection \ref{Compound sparse denoising}, we maintain the compound sparse denoising model \eqref{CSD}, the observed signal $\mathbf{y}$, and the high-pass filter matrix $\mathbf{H}$ to ensure consistency. The objective is to assess algorithmic robustness and provide practical heuristics for parameter tuning.

\subsubsection*{Experimental Configuration}
We systematically vary $\bm{\lambda}^0$ and $\epsilon$ while fixing the target sparsity levels at $\text{TSLs} = [80, 60]$. Performance is measured via the attained sparsity levels (SLs), the number of iterations ($\mathrm{NUM}$), total computational time, and the mean squared error (MSE). 

\subsubsection*{Impact of Initial Values ($\bm{\lambda}^0$)}
Table \ref{Different_lambda0} reports the performance metrics for different choices of $\bm{\lambda}^0$ with a fixed threshold $\epsilon = 2$. In the table, $\mathrm{ISLs}$ denotes the initial sparsity levels associated with $\bm{\lambda}^0$, serving as a baseline.

\begin{table}[ht]
\centering
\caption{Sensitivity to $\bm{\lambda}^0$ ($\text{TSLs} = [80,60]$, $\epsilon=2$)}
\label{Different_lambda0}
\small
\begin{tabular}{l||c|c|c|c|c}
\hline
$\bm{\lambda}^0$ & $[0.5, 1]$ & $[0.3, 0.4]$ & $[0.28, 0.2]$ & $[0.25, 0.18]$ & $[0.20, 0.17]$ \\ \hline\hline
$\mathrm{ISLs}$ & $[3, 3]$ & $[17, 10]$ & $[33, 27]$ & $[50, 39]$ & $[77, 60]$ \\
$\bm{\lambda}^*$ & $[0.1982, 0.1742]$ & $[0.1970, 0.1753]$ & $[0.1978, 0.1746]$ & $[0.1994, 0.1704]$ & $[0.1992, 0.1700]$ \\
$\mathrm{SLs}$ & $[79, 59]$ & $[80, 60]$ & $[79, 59]$ & $[78, 60]$ & $[78, 60]$ \\
Ratios (\%) & $[26.33, 19.73]$ & $[26.67, 20.07]$ & $[26.33, 19.73]$ & $[26.00, 20.07]$ & $[26.00, 20.07]$ \\
$\mathrm{NUM}$ & 13 & 10 & 9 & 5 & 2 \\
Time (s) & 22 & 17 & 15 & 9 & 5 \\
$\mathrm{MSE}$ ($\times 10^{-2}$) & 1.58 & 1.57 & 1.58 & 1.59 & 1.59 \\ \hline
\end{tabular}
\end{table}

\begin{itemize}
    \item \textbf{Invariance of Solution Quality:} Despite $\bm{\lambda}^0$ spanning a wide range, the converged parameters $\bm{\lambda}^*$ consistently cluster within a narrow interval ($\lambda_1^* \in [0.1970, 0.1994]$ and $\lambda_2^* \in [0.1700, 0.1753]$). Similarly, the MSE and final SLs remain stable, indicating that the algorithm reaches the same optimal region regardless of the starting point.
    \item \textbf{Efficiency Gains:} While the final quality is robust, convergence speed is highly sensitive to the proximity of $\bm{\lambda}^0$ to $\bm{\lambda}^*$. As the initial ISLs approach the TSLs, the iteration count drops from 13 to 2, and runtime decreases by over 75\%, providing clear guidance for efficient parameter initialization.
\end{itemize}

\subsubsection*{Impact of Stopping Threshold ($\epsilon$)}
Table \ref{Different_epsilon} details the algorithm's behavior for different $\epsilon$ values using a fixed $\bm{\lambda}^0 = [0.5, 1.0]$.

\begin{table}[ht]
\centering
\caption{Sensitivity to $\epsilon$ ($\text{TSLs} = [80,60]$, $\bm{\lambda}^0=[0.5, 1.0]$)}
\label{Different_epsilon}
\small
\begin{tabular}{l||c|c|c|c|c}
\hline
$\epsilon$ & 0 & 1 & 2 & 3 (4) & 5 \\ \hline\hline
$\bm{\lambda}^*$ & \begin{tabular}[c]{@{}c@{}}$[0.19821588,$\\ $ \ 0.17418961]$\end{tabular} & \begin{tabular}[c]{@{}c@{}}$[0.19821588,$\\ $\ 0.17418969]$\end{tabular} & \begin{tabular}[c]{@{}c@{}}$[0.19821637,$\\ $\ 0.17419025]$\end{tabular} & \begin{tabular}[c]{@{}c@{}}$[0.19837105,$\\ $\ 0.17431384]$\end{tabular} & \begin{tabular}[c]{@{}c@{}}$[0.19881365,$\\ $\ 0.17460994]$\end{tabular} \\ \hline
$\mathrm{SLs}$ & $[80, 60]$ & $[80, 59]$ & $[79, 59]$ & $[78, 59]$ & $[77, 58]$ \\
Ratios (\%) & $[26.67, 20.07]$ & $[26.67, 19.73]$ & $[26.33, 19.73]$ & $[26.00, 19.73]$ & $[25.67, 19.40]$ \\
$\mathrm{NUM}$ & 23 & 15 & 13 & 9 & 8 \\
Time (s) & 40 & 25 & 22 & 15 & 13 \\
$\mathrm{MSE}$ ($\times 10^{-2}$) & 1.58 & 1.58 & 1.58 & 1.58 & 1.58 \\ \hline
\end{tabular}
\end{table}

\begin{itemize}
    \item \textbf{Robustness of Denoising:} The solution quality (MSE) is unaffected by $\epsilon$ within the range $[0, 5]$, suggesting the core denoising capability is stable relative to the convergence threshold.
    \item \textbf{The Accuracy--Efficiency Trade-off:} Increasing $\epsilon$ significantly accelerates the algorithm (e.g., from 40s to 13s). However, higher thresholds lead to a reduction in sparsity control accuracy. At $\epsilon=0$, the algorithm matches the TSLs exactly ($[80, 60]$), whereas at $\epsilon=5$, the result deviates to $[77, 58]$ due to premature termination.
\end{itemize}

\textbf{Practical Recommendation:} For most applications with $d=2$, a threshold of \textbf{$\epsilon = 2$} is recommended. It provides a balanced compromise between high sparsity control accuracy and computational efficiency. Thresholds above 5 should generally be avoided to prevent excessive deviation from the target sparsity structure.

\section{Conclusion}\label{Concl}
This paper presents a comprehensive study on sparsity-guided multi-parameter selection in $\ell_1$-regularized models. The proposed framework addresses the critical challenge of selecting multiple regularization parameters to achieve prescribed sparsity levels under different transform matrices. The key contributions are as follows: 

\begin{enumerate}
    \item \textbf{Sparsity--parameter characterization:} We establish a precise relationship between regularization parameters and solution sparsity using tools from convex analysis. This characterization allows independent control of sparsity under each transform matrix, enhancing flexibility in promoting structured sparsity. 

    \item \textbf{Iterative multi-parameter scheme:} We develop an iterative algorithm to simultaneously determine multiple parameters and solutions with target sparsity levels. The scheme includes corrections for invalid updates and fallback mechanisms for undersized parameters, ensuring reliable and consistent performance in practice. 

    \item \textbf{Efficient computation via fixed-point proximity:} To address the computational challenges of jointly computing the regularized solution and two auxiliary vectors at each iteration, we propose a fixed-point proximity algorithm and prove its convergence. This algorithm is a crucial component of the iterative parameter selection scheme. 

    \item \textbf{Extensive numerical validation:} Experiments on signal denoising, compound sparse denoising, and fused SVM classification demonstrate the effectiveness of the proposed multi-parameter selection strategies in producing solutions with the desired sparsity and high approximation accuracy. Dedicated experiments further provide numerical evidence supporting the theoretical assumptions of the iterative scheme and show the algorithm’s robustness to key hyperparameters. 
\end{enumerate}

For future work, we plan to (1) analyze the convergence of the iterative scheme for multiple parameter selection and (2) investigate the dual role of regularization parameters in both alleviating ill-posedness and promoting sparsity, as suggested by our numerical results.

\section*{Appendices}   
\begin{appendices}
\section{The closed-form formulas of proximity operators}
\renewcommand{\thetheorem}{A.\arabic{theorem}}
In this appendix, we provide the closed-form formulas for three proximity operators  utilized in iteration \eqref{Reduce_FPPA_w}. 

We begin with the proximity operator $\mathrm{prox}_{\alpha\sum_{j\in\mathbb{N}_d}\lambda_j\|\cdot\|_{1}\circ\mathbf{I}^{'}_{j}}$ with $\alpha>0$, $\lambda_j>0$, $j\in\mathbb{N}_d$   and $\mathbf{I}^{'}_{j}$, $j\in\mathbb{N}_d$ being defined by \eqref{I-j}.

\begin{proposition}
If $\alpha>0$, $\lambda_j>0$, $j\in\mathbb{N}_d$   and $\mathbf{I}^{'}_{j}$, $j\in\mathbb{N}_d$ are defined by \eqref{I-j}, then the proximity operator $\mathrm{prox}_{\alpha\sum_{j\in\mathbb{N}_d}\lambda_j\|\cdot\|_{1}\circ\mathbf{I}^{'}_{j}}$ at $
\scalebox{0.8}{$\begin{bmatrix}
\mathbf{z}\\
\mathbf{v}\end{bmatrix}$}\in\mathbb{R}^{p_d+n-r}$ with $\mathbf{z}:=[z_{k}:k\in\mathbb{N}_{p_d}]\in\mathbb{R}^{p_d}$ and $\mathbf{v}\in\mathbb{R}^{n-r}$ has the form 
\begin{equation}\label{proximity-operator-sum_lambda_j}
\mathrm{prox}_{\alpha\sum_{j\in\mathbb{N}_d}\lambda_j\|\cdot\|_{1}\circ\mathbf{I}^{'}_{j}}\left(\raisebox{-0.3ex}{\scalebox{0.8}{$\begin{bmatrix}
\mathbf{z}\\
\mathbf{v}\end{bmatrix}$}}\right):=\scalebox{0.8}{$\begin{bmatrix}\bm{\mu}\\
\mathbf{v}\end{bmatrix}$}
\ \mbox{with}\ \bm{\mu}:=[\mu_{k}: k\in\mathbb{N}_{p_d}]\in\mathbb{R}^{p_d},
\end{equation}
where for each $j\in\mathbb{N}_d$ and each  $i\in\mathbb{N}_{m_j}$
\begin{equation}\label{proximity-operator-sum_lambda_j_mu}
\mu_{p_{j-1}+i}=
\left\{\begin{array}{ll}
z_{p_{j-1}+i}-\alpha\lambda_j, \ &\mathrm{if}\ z_{p_{j-1}+i}>\alpha\lambda_j,\\
z_{p_{j-1}+i}+\alpha\lambda_j, \ & \mathrm{if}\ z_{p_{j-1}+i}<-\alpha\lambda_j,\\
 0, \ & \mathrm{if}\ z_{p_{j-1}+i}\in[-\alpha\lambda_j,\alpha\lambda_j].
\end{array}\right.
\end{equation} 
\end{proposition}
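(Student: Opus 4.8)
The plan is to exploit the separable structure of the function $f(\mathbf{w}):=\alpha\sum_{j\in\mathbb{N}_d}\lambda_j\|\cdot\|_{1}\circ\mathbf{I}^{'}_{j}(\mathbf{w})$ so that the proximity minimization \eqref{proximity operator} decouples into independent scalar problems, each of which is solved explicitly by soft-thresholding.

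First I would read off from the definition \eqref{I-j} of $\mathbf{I}^{'}_{j}$ that for any $\mathbf{w}=\scalebox{0.8}{$\begin{bmatrix}\mathbf{z}\\\mathbf{v}\end{bmatrix}$}$ with $\mathbf{z}\in\mathbb{R}^{p_d}$ and $\mathbf{v}\in\mathbb{R}^{n-r}$, the product $\mathbf{I}^{'}_{j}\mathbf{w}$ merely extracts the $j$-th subvector $\mathbf{z}_j=[z_{p_{j-1}+i}:i\in\mathbb{N}_{m_j}]$ of $\mathbf{z}$, and in particular does not involve $\mathbf{v}$. Consequently $f(\mathbf{w})=\alpha\sum_{j\in\mathbb{N}_d}\lambda_j\|\mathbf{z}_j\|_1$, which depends on $\mathbf{z}$ alone; since the blocks $\mathbf{z}_j$ partition $\mathbf{z}$, this is a fully separable weighted $\ell_1$-norm, namely $f(\mathbf{w})=\sum_{k\in\mathbb{N}_{p_d}}\alpha\lambda_{j(k)}|z_k|$, where $j(k)$ denotes the unique block index containing $k$.

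Then I would write the objective in \eqref{proximity operator} at the point $\scalebox{0.8}{$\begin{bmatrix}\mathbf{z}\\\mathbf{v}\end{bmatrix}$}$ with trial variable $\scalebox{0.8}{$\begin{bmatrix}\bm{\mu}\\\bm{\nu}\end{bmatrix}$}$ as
$$
\tfrac{1}{2}\|\bm{\mu}-\mathbf{z}\|_2^2+\tfrac{1}{2}\|\bm{\nu}-\mathbf{v}\|_2^2+\sum_{k\in\mathbb{N}_{p_d}}\alpha\lambda_{j(k)}|\mu_k|.
$$
The $\bm{\nu}$-block is unpenalized, so its minimizer is $\bm{\nu}=\mathbf{v}$, which confirms the lower block of \eqref{proximity-operator-sum_lambda_j}. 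The $\bm{\mu}$-block splits across coordinates, and for each $k=p_{j-1}+i$ one is left with the scalar problem $\min_{\mu\in\mathbb{R}}\tfrac12(\mu-z_{k})^2+\alpha\lambda_j|\mu|$. I would solve this by the Fermat rule together with the subdifferential of $|\cdot|$ (equivalently, by recalling the standard soft-thresholding identity), distinguishing the three cases $z_k>\alpha\lambda_j$, $z_k<-\alpha\lambda_j$, and $z_k\in[-\alpha\lambda_j,\alpha\lambda_j]$, to recover precisely the piecewise formula \eqref{proximity-operator-sum_lambda_j_mu}.

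The computation itself is entirely routine; the only step requiring care—and the one I would treat as the crux—is the structural observation in the first paragraph that $\mathbf{I}^{'}_{j}$ acts as a block-selection operator on $\mathbf{z}$ alone, so that the penalty neither couples the blocks $\mathbf{z}_j$ nor acts on $\mathbf{v}$. Once this is verified from \eqref{I-j}, the separability of the objective and the reduction to coordinatewise soft-thresholding are immediate, and the closed form \eqref{proximity-operator-sum_lambda_j}--\eqref{proximity-operator-sum_lambda_j_mu} follows at once.
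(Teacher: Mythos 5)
Your proof is correct and follows essentially the same route as the paper: both exploit the fact that $\mathbf{I}'_j$ merely selects the block $\mathbf{z}_j$, so the objective in \eqref{proximity operator} separates into an unpenalized $\mathbf{v}$-part (giving $\bm{\nu}=\mathbf{v}$) and independent weighted $\ell_1$ problems solved by soft-thresholding. The only cosmetic difference is that the paper stops at the block level and cites the known closed form of the $\ell_1$ proximity operator, whereas you descend to scalar coordinates and rederive it via the Fermat rule; the substance is identical.
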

\begin{proof}
For each $\mathbf{z}:=[z_{k}:k\in\mathbb{N}_{p_d}]\in\mathbb{R}^{p_d}$ and each $\mathbf{v}\in\mathbb{R}^{n-r}$, we set $\mathrm{prox}_{\alpha\sum_{j\in\mathbb{N}_d}\lambda_j\|\cdot\|_{1}\circ\mathbf{I}^{'}_{j}}\left({\scalebox{0.8}{$\begin{bmatrix}
\mathbf{z}\\
\mathbf{v}\end{bmatrix}$}}\right):={\scalebox{0.8}{$\begin{bmatrix}\bm{\mu}\\
\bm{\nu}\end{bmatrix}$}}$ with $ \bm{\mu}:=[\mu_{k}: k\in\mathbb{N}_{p_d}]\in\mathbb{R}^{p_d}$ and $\bm{\nu}\in\mathbb{R}^{n-r}$. It follows from definition \eqref{proximity operator} of the proximity operator that 
\begin{equation*}\label{alpah_prox_argmin}
{\scalebox{0.8}{$\begin{bmatrix}\bm{\mu}\\
\bm{\nu}\end{bmatrix}$}}=\argmin\left\{\frac{1}{2}\left\|\scalebox{0.8}{$\begin{bmatrix}
\widetilde{\mathbf{z}}\\
\widetilde{\mathbf{v}}\end{bmatrix}$}-\scalebox{0.8}{$\begin{bmatrix}
\mathbf{z}\\
\mathbf{v}\end{bmatrix}$}\right\|_2^2+\alpha \sum_{j\in\mathbb{N}_d}\lambda_j\left\|\mathbf{I}^{'}_{j}\scalebox{0.8}{$\begin{bmatrix}
\widetilde{\mathbf{z}}\\
\widetilde{\mathbf{v}}\end{bmatrix}$}\right\|_1:\scalebox{0.8}{$\begin{bmatrix}
\widetilde{\mathbf{z}}\\
\widetilde{\mathbf{v}}\end{bmatrix}$}\in\mathbb{R}^{p_d+n-r}\right\}   
\end{equation*}
which further leads to 
$$
{\scalebox{0.8}{$\begin{bmatrix}\bm{\mu}\\
\bm{\nu}\end{bmatrix}$}}=\argmin\left\{\sum_{j\in\mathbb{N}_d}\left(\frac{1}{2}\|\widetilde{\mathbf{z}}_j-
\mathbf{z}_j\|_2^2+\alpha\lambda_j\|\widetilde{\mathbf{z}}_j\|_1\right)+\frac{1}{2}\|
\widetilde{\mathbf{v}}-
\mathbf{v}\|_2^2:\scalebox{0.8}{$\begin{bmatrix}
\widetilde{\mathbf{z}}\\
\widetilde{\mathbf{v}}\end{bmatrix}$}\in\mathbb{R}^{p_d+n-r}\right\}.
$$
As a result, we have that 
\begin{equation}\label{mu_j}
\bm{\mu}_j=\argmin \left\{\frac{1}{2}\|\widetilde{\mathbf{z}}_j-
\mathbf{z}_j\|_2^2+\alpha\lambda_j\|\widetilde{\mathbf{z}}_j\|_1:\widetilde{\mathbf{z}}_j\in\mathbb{R}^{m_j}\right\}, \ j\in\mathbb{N}_d,   
\end{equation}
and 
\begin{equation}\label{nu}
\bm{\nu}=\argmin\left\{\frac{1}{2}\|
\widetilde{\mathbf{v}}-
\mathbf{v}\|_2^2:\widetilde{\mathbf{v}}\in\mathbb{R}^{n-r}\right\}. 
\end{equation}
According to Examples 2.3 and 2.4 in \cite{micchelli2011proximity} and noting that $\bm{\mu}_j:=[\mu_{p_{j-1}+i}:i\in\mathbb{N}_{m_j}]$ and $\mathbf{z}_j:=[z_{p_{j-1}+i}:i\in\mathbb{N}_{m_j}]$, $j\in\mathbb{N}_d$, we obtain from equation \eqref{mu_j} that
$$
\mu_{p_{j-1}+i}=\max\{|z_{p_{j-1}+i}|-\alpha\lambda_j,0
\}{\rm sign}(z_{p_{j-1}+i}), \ j\in\mathbb{N}_d,\  i\in\mathbb{N}_{m_j}.
$$
That is, equation \eqref{proximity-operator-sum_lambda_j} holds. Moreover,
equation \eqref{nu} leads directly to $\bm{\nu}=\mathbf{v}$.
\end{proof}

In particular, when $n=r$, equation \eqref{proximity-operator-sum_lambda_j} reduces to the following simplified form for all $\mathbf{z}\in\mathbb{R}^{p_d}$:
\begin{equation}\label{proximity-operator-sum_lambda_j1}
\mathrm{prox}_{\alpha\sum_{j\in\mathbb{N}_d}\lambda_j\|(\cdot)_j\|_{1}}(
\mathbf{z}):=\bm{\mu}
\ \mbox{with}\ \bm{\mu}:=[\mu_{k}: k\in\mathbb{N}_{p_d}]\in\mathbb{R}^{p_d},
\end{equation}
where for each $j\in\mathbb{N}_d$ and each $i\in\mathbb{N}_{m_j}$, each component $\mu_{p_{j-1}+i}$ is defined by \eqref{proximity-operator-sum_lambda_j_mu}.

The next closed-form formula concerns the proximity operator $\mathrm{prox}_{\frac{1}{\beta}\iota_{\mathbb{M}}}$ with $\mathbb{M}:=\mathcal{R}(\mathbf{B})\times\mathbb{R}^{n-r}$ and $\beta>0$. For a matrix $\mathbf{M}$, we denote by $\mathbf{M}^{\dag}$ the pseudoinverse of $\mathbf{M}$.
\begin{proposition}\label{prox-indicator}
If $\mathbb{M}:=\mathcal{R}(\mathbf{B})\times\mathbb{R}^{n-r}$ and $\beta>0$, then the proximity operator $\mathrm{prox}_{\frac{1}{\beta}\iota_{\mathbb{M}}}$ at $
\scalebox{0.8}{$\begin{bmatrix}
\mathbf{z}\\
\mathbf{v}\end{bmatrix}$}\in\mathbb{R}^{p_d+n-r}$ with $\mathbf{z}:=[z_{k}:k\in\mathbb{N}_{p_d}]\in\mathbb{R}^{p_d}$ and $\mathbf{v}\in\mathbb{R}^{n-r}$ has the form 
\begin{equation*}
\mathrm{prox}_{\frac{1}{\beta}\iota_{\mathbb{M}}}\left(\scalebox{0.8}{$\begin{bmatrix}
\mathbf{z}\\
\mathbf{v}\end{bmatrix}$}\right)=\scalebox{0.8}{$\begin{bmatrix}\mathbf{B}(\mathbf{B}^{\top}\mathbf{B})^{\dag}\mathbf{B}^{\top}\mathbf{z}\\\mathbf{v}\end{bmatrix}$}.
\end{equation*}
\end{proposition}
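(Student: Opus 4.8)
The plan is to reduce the computation to an orthogonal projection and then exploit the Cartesian-product structure of $\mathbb{M}$. First I would observe that the indicator function is scale-invariant: since $\iota_{\mathbb{M}}$ takes only the values $0$ and $+\infty$, we have $\frac{1}{\beta}\iota_{\mathbb{M}}=\iota_{\mathbb{M}}$ for every $\beta>0$, so that $\mathrm{prox}_{\frac{1}{\beta}\iota_{\mathbb{M}}}=\mathrm{prox}_{\iota_{\mathbb{M}}}$. By definition \eqref{proximity operator} with $\mathbf{H}=\mathbf{I}$, the latter is exactly the Euclidean projection onto the closed convex set $\mathbb{M}$, because minimizing $\frac12\|\mathbf{y}-\mathbf{x}\|_2^2+\iota_{\mathbb{M}}(\mathbf{y})$ forces $\mathbf{y}\in\mathbb{M}$ and then selects the nearest such point.

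Next I would use the product structure $\mathbb{M}:=\mathcal{R}(\mathbf{B})\times\mathbb{R}^{n-r}$. The projection onto a Cartesian product of closed convex sets decomposes blockwise, so projecting $\scalebox{0.8}{$\begin{bmatrix}\mathbf{z}\\\mathbf{v}\end{bmatrix}$}$ onto $\mathbb{M}$ amounts to projecting $\mathbf{z}$ onto $\mathcal{R}(\mathbf{B})\subseteq\mathbb{R}^{p_d}$ and projecting $\mathbf{v}$ onto $\mathbb{R}^{n-r}$. The second projection is the identity, so the $\mathbf{v}$-block is returned unchanged, matching the claimed formula.

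It then remains to identify the first-block projector. The orthogonal projector onto the column space $\mathcal{R}(\mathbf{B})$ is $\mathbf{B}\mathbf{B}^{\dag}$, and using the standard pseudoinverse identity $\mathbf{B}^{\dag}=(\mathbf{B}^{\top}\mathbf{B})^{\dag}\mathbf{B}^{\top}$ this equals $\mathbf{B}(\mathbf{B}^{\top}\mathbf{B})^{\dag}\mathbf{B}^{\top}$. Substituting the two blockwise results then yields the asserted expression for $\mathrm{prox}_{\frac{1}{\beta}\iota_{\mathbb{M}}}$.

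The main obstacle I anticipate is the verification that $\mathbf{B}(\mathbf{B}^{\top}\mathbf{B})^{\dag}\mathbf{B}^{\top}$ is precisely the orthogonal projector onto $\mathcal{R}(\mathbf{B})$ in the rank-deficient case $r<\min\{p_d,n\}$, where $\mathbf{B}^{\top}\mathbf{B}$ is singular and cannot simply be inverted. I would resolve this by checking the defining properties of an orthogonal projector—idempotence, symmetry, and the correct range and kernel—via the four Moore–Penrose relations characterizing $\mathbf{B}^{\dag}$; alternatively, and perhaps more transparently, by inserting the SVD $\mathbf{B}=\mathbf{U}\mathbf{\Lambda}\mathbf{V}^{\top}$ introduced before \eqref{solution-Bu=z} and computing $\mathbf{B}(\mathbf{B}^{\top}\mathbf{B})^{\dag}\mathbf{B}^{\top}=\widetilde{\mathbf{U}}_r\widetilde{\mathbf{U}}_r^{\top}$, which is manifestly the projector onto the span of the first $r$ left singular vectors, that is, onto $\mathcal{R}(\mathbf{B})$.
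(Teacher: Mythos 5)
Your proposal is correct and follows essentially the same route as the paper: both reduce $\mathrm{prox}_{\frac{1}{\beta}\iota_{\mathbb{M}}}$ to the Euclidean projection onto $\mathbb{M}$, split it blockwise so that the $\mathbf{v}$-component passes through unchanged, and project $\mathbf{z}$ onto $\mathcal{R}(\mathbf{B})$. The only difference is cosmetic: the paper obtains $\mathbf{B}(\mathbf{B}^{\top}\mathbf{B})^{\dag}\mathbf{B}^{\top}$ by writing out the normal equations $\mathbf{B}^{\top}\mathbf{B}\mathbf{x}=\mathbf{B}^{\top}\mathbf{z}$ for the best-approximation problem, whereas you invoke the standard projector $\mathbf{B}\mathbf{B}^{\dag}$ together with the identity $\mathbf{B}^{\dag}=(\mathbf{B}^{\top}\mathbf{B})^{\dag}\mathbf{B}^{\top}$ (verifiable via the SVD), which is equally valid.
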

\begin{proof}
According to definition \eqref{proximity operator} of the proximity operator, we obtain for each $\mathbf{z}\in\mathbb{R}^{p_d}$ and each $\mathbf{v}\in\mathbb{R}^{n-r}$ that
$\mathrm{prox}_{\frac{1}{\beta}\iota_{\mathbb{M}}}\left({\scalebox{0.8}{$\begin{bmatrix}
\mathbf{z}\\
\mathbf{v}\end{bmatrix}$}}\right)
:={\scalebox{0.8}{$\begin{bmatrix}\bm{\mu}\\
\bm{\nu}\end{bmatrix}$}}$ with $ \bm{\mu}:=[\mu_{k}: k\in\mathbb{N}_{p_d}]\in\mathbb{R}^{p_d}$ and $\bm{\nu}\in\mathbb{R}^{n-r}$, where
$$
{\scalebox{0.8}{$\begin{bmatrix}\bm{\mu}\\
\bm{\nu}\end{bmatrix}$}}=\argmin\left\{\frac{1}{2}\left\|\scalebox{0.8}{$\begin{bmatrix}
\widetilde{\mathbf{z}}\\
\widetilde{\mathbf{v}}\end{bmatrix}$}-\scalebox{0.8}{$\begin{bmatrix}
\mathbf{z}\\
\mathbf{v}\end{bmatrix}$}\right\|_2^2+\frac{1}{\beta}\iota_{\mathbb{M}}\left({\scalebox{0.8}{$\begin{bmatrix}
\widetilde{\mathbf{z}}\\
\widetilde{\mathbf{v}}\end{bmatrix}$}}\right):\scalebox{0.8}{$\begin{bmatrix}
\widetilde{\mathbf{z}}\\
\widetilde{\mathbf{v}}\end{bmatrix}$}\in\mathbb{R}^{p_d+n-r}\right\}.
$$
By noting that $\mathbb{M}:=\mathcal{R}(\mathbf{B})\times\mathbb{R}^{n-r}$, we rewrite the above equation as 
\begin{equation}\label{mu1}
\bm{\mu}=\argmin\left\{\frac{1}{2}\|\widetilde{\mathbf{z}}-
\mathbf{z}\|_2^2:\widetilde{\mathbf{z}}\in\mathcal{R}(\mathbf{B})\right\}
\end{equation}
and 
\begin{equation}\label{nu1}
\bm{\nu}=\argmin\left\{\frac{1}{2}\|
\widetilde{\mathbf{v}}-
\mathbf{v}\|_2^2:
\widetilde{\mathbf{v}}\in\mathbb{R}^{n-r}\right\}.
\end{equation}
Equation \eqref{mu1} shows that $\bm{\mu}$ is the best approximation  to $\mathbf{z}$ from the subspace $\mathcal{R}(\mathbf{B})$. Hence, we get that $\bm{\mu}=\mathbf{B}\mathbf{x}$ with vector $\mathbf{x}\in\mathbb{R}^n$ satisfying
$$
(\mathbf{B}\mathbf{y})^{\top}(\mathbf{z}-\mathbf{B}\mathbf{x})=0,\ \mbox{for all}\ \mathbf{y}\in\mathbb{R}^n.
$$
By rewriting the above equation as 
$$
\mathbf{y}^{\top}\mathbf{B}^{\top}(\mathbf{z}-\mathbf{B}\mathbf{x})=0,\ \mbox{for all}\ \mathbf{y}\in\mathbb{R}^n,
$$
we have that vector $\mathbf{x}$ is a solution of the linear system 
$$
\mathbf{B}^{\top}\mathbf{B}\mathbf{x}=\mathbf{B}^{\top}\mathbf{z}.
$$
By using the pseudoinverse of $\mathbf{B}^{\top}\mathbf{B}$, we represent  $\mathbf{x}$ as
$$
\mathbf{x}=(\mathbf{B}^{\top}\mathbf{B})^{\dag}\mathbf{B}^{\top}\mathbf{z}+\mathbf{x}_0,
$$
where $\mathbf{x}_0\in\mathbb{R}^n$  satisfying $\mathbf{B}^{\top}\mathbf{B}\mathbf{x}_0=0$. Note that $\mathbf{B}^{\top}\mathbf{B}\mathbf{x}_0=0$ if and only if $\mathbf{B}\mathbf{x}_0=0$. Thus, $\bm{\mu}=\mathbf{B}\mathbf{x}=\mathbf{B}(\mathbf{B}^{\top}\mathbf{B})^{\dag}\mathbf{B}^{\top}\mathbf{z}$. Moreover, we can obtain directly from equation \eqref{nu1} that $\bm{\nu}=\mathbf{v}$.
\end{proof}

We now consider the special case $n=r$, where $\mathbb{M}$ coincides with the range space $\mathcal{R}(\mathbf{B})$.  Since $\mathbf{B}$ has full column rank, the matrix $\mathbf{B}^{\top}\mathbf{B}$ is nonsingular. As a direct consequence of Proposition \ref{prox-indicator}, the proximity operator $\mathrm{prox}_{\frac{1}{\beta}\iota_{\mathbb{M}}}$ admits the explicit form
 \begin{equation}\label{indicator_prox}
    \mathrm{prox}_{\frac{1}{\beta}\iota_{\mathbb{M}}}(\mathbf{z})=\mathbf{B}(\mathbf{B}^{\top}\mathbf{B})^{-1}\mathbf{B}^{\top}\mathbf{z}, \ \mbox{for all}\ \mathbf{z}:=[z_{k}:k\in\mathbb{N}_{p_d}]\in\mathbb{R}^{p_d}.
\end{equation}  

Finally, we give closed-form formulas for the proximity operators
of some loss functions, which will be used in numerical experiments. The first loss function is the $\ell_2$-norm composed with a matrix.
\begin{proposition}\label{2-norm}
If $\mathbf{A}$ is an $m\times n$ matrix, $\mathbf{x}\in\mathbb{R}^m$ and $\rho>0$, then the proximity operator $\mathrm{prox}_{\frac{1}{2\rho}\|\mathbf{A}\cdot-\mathbf{x}\|_2^2}$ at $\mathbf{z}\in\mathbb{R}^n$ has the form  
\begin{equation}\label{Au-2-norm}
\mathrm{prox}_{\frac{1}{2\rho}\|\mathbf{A}\cdot-\mathbf{x}\|_2^2}(\mathbf{z})=\left(\rho\mathbf{I}_n+\mathbf{A}^{\top}\mathbf{A}\right)^{-1}\left(\rho\mathbf{z} + \mathbf{A}^{\top}\mathbf{x}\right).    
\end{equation}
\end{proposition}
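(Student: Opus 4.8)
The plan is to compute the proximity operator directly from its variational definition \eqref{proximity operator}, exploiting the fact that the function $\frac{1}{2\rho}\|\mathbf{A}\cdot-\mathbf{x}\|_2^2$ is a smooth convex quadratic, so that the defining minimization problem admits a unique minimizer characterized by a single linear first-order condition. First I would unfold the definition: for $\mathbf{z}\in\mathbb{R}^n$, the value $\mathrm{prox}_{\frac{1}{2\rho}\|\mathbf{A}\cdot-\mathbf{x}\|_2^2}(\mathbf{z})$ is the minimizer over $\mathbf{y}\in\mathbb{R}^n$ of
\begin{equation*}
g(\mathbf{y}):=\frac{1}{2}\|\mathbf{y}-\mathbf{z}\|_2^2+\frac{1}{2\rho}\|\mathbf{A}\mathbf{y}-\mathbf{x}\|_2^2.
\end{equation*}
Since $g$ is a sum of a strictly convex term $\frac{1}{2}\|\mathbf{y}-\mathbf{z}\|_2^2$ and a convex term, it is strictly convex, so the minimizer exists and is unique.

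Next I would apply the Fermat rule: because $g$ is differentiable, its unique minimizer $\mathbf{y}$ is the solution of $\nabla g(\mathbf{y})=\mathbf{0}$. Computing the gradient gives
\begin{equation*}
(\mathbf{y}-\mathbf{z})+\tfrac{1}{\rho}\mathbf{A}^{\top}(\mathbf{A}\mathbf{y}-\mathbf{x})=\mathbf{0}.
\end{equation*}
Multiplying through by $\rho$ and rearranging yields the linear system $(\rho\mathbf{I}_n+\mathbf{A}^{\top}\mathbf{A})\mathbf{y}=\rho\mathbf{z}+\mathbf{A}^{\top}\mathbf{x}$. The matrix $\rho\mathbf{I}_n+\mathbf{A}^{\top}\mathbf{A}$ is symmetric positive definite, since $\mathbf{A}^{\top}\mathbf{A}$ is positive semidefinite and $\rho>0$ makes the whole matrix strictly positive definite; hence it is invertible, and solving for $\mathbf{y}$ produces exactly the claimed formula \eqref{Au-2-norm}.

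There is no genuine obstacle here, so the emphasis should be on rigor rather than difficulty. The one point worth stating explicitly is the invertibility of $\rho\mathbf{I}_n+\mathbf{A}^{\top}\mathbf{A}$, which simultaneously guarantees that the first-order condition has a unique solution and that this solution coincides with the (unique) global minimizer; this closes the argument and confirms that the right-hand side of \eqref{Au-2-norm} is well-defined without any rank assumption on $\mathbf{A}$.
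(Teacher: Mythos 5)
Your proposal is correct and follows essentially the same route as the paper's proof: unfold the definition of the proximity operator, apply the Fermat rule to the differentiable objective to obtain the linear system $(\rho\mathbf{I}_n+\mathbf{A}^{\top}\mathbf{A})\mathbf{y}=\rho\mathbf{z}+\mathbf{A}^{\top}\mathbf{x}$, and invert the nonsingular matrix. Your added remarks on strict convexity and positive definiteness only make explicit what the paper leaves implicit.
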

\begin{proof}
By setting $\mathrm{prox}_{\frac{1}{2\rho}\|\mathbf{A}\cdot-\mathbf{x}\|_2^2}(\mathbf{z}):=\bm{\mu}$, we obtain from definition \eqref{proximity operator} of the proximity operator that 
$$
\bm{\mu}=\argmin\left\{\frac{1}{2}\|\mathbf{u}-\mathbf{z}\|_2^2+\frac{1}{2\rho}\|\mathbf{A}\mathbf{u}-\mathbf{x}\|_2^2:\mathbf{u}\in\mathbb{R}^n\right\},
$$
which together with the Fermat rule leads to 
\begin{equation*}
\bm{\mu}-\mathbf{z}+\frac{1}{\rho}\mathbf{A}^{\top}
(\mathbf{A}\bm{\mu}-\mathbf{x})=\mathbf{0}.
\end{equation*}
By rewriting the above equation as 
$$
\left(\rho\mathbf{I}_n+\mathbf{A}^{\top}\mathbf{A}\right)\bm{\mu}
=\rho\mathbf{z} + \mathbf{A}^{\top}\mathbf{x}
$$
and noting that $\rho\mathbf{I}_n+\mathbf{A}^{\top}\mathbf{A}$ is nonsingular, we obtain that 
$$
\bm{\mu}=\left(\rho\mathbf{I}_n+\mathbf{A}^{\top}\mathbf{A}\right)^{-1}\left(\rho\mathbf{z} + \mathbf{A}^{\top}\mathbf{x}\right),
$$
which completes the proof of this proposition.
\end{proof}

Two special cases of Proposition \ref{2-norm} are employed in our numerical experiments. In Subsection  \ref{NM_block_separable}, the loss function $\bm{\psi}$ is defined by  \eqref{fidelity_term} with $\mathbf{A}$ being an orthogonal matrix. In this case, equation \eqref{Au-2-norm} reduces to
\begin{equation}\label{Au-2-norm-orthogonal}
  \mathrm{prox}_{\frac{1}{\rho}\bm{\psi}}(\mathbf{z}):=\frac{1}{\rho+1}\left(\rho\mathbf{z} + \mathbf{A}^{\top}\mathbf{x}\right), \ \mbox{for all}\ \mathbf{z}\in\mathbb{R}^n.    
\end{equation}
In Subsection \ref{Compound sparse denoising}, the loss function $\bm{\psi}$ is defined by \eqref{fidelity-CSD} and the proximity operator $\mathrm{prox}_{\frac{1}{\rho}\bm{\psi}}$ at $\mathbf{z}\in\mathbb{R}^n$ can be represented by 
\begin{equation}\label{2-norm-2}
\mathrm{prox}_{\frac{1}{\rho}\bm{\psi}}(\mathbf{z}):=(\rho\mathbf{I}_n+\mathbf{H}^{\top}\mathbf{H})^{-1}(\rho\mathbf{z}+\mathbf{H}^{\top}\mathbf{Hy}).
\end{equation}

The loss function in Subsection \ref{Fused SVM} is chosen as $\bm{\phi}(\mathbf{z}):=\sum_{j\in\mathbb{N}_p}\mathrm{max}\{0,1-z_j\}$ for $\mathbf{z}:=[z_j:j\in\mathbb{N}_p]\in\mathbb{R}^p$. The closed-form formula of the proximity operator $\mathrm{prox}_{\frac{1}{\rho}\bm{\phi}}$ with $\rho>0$ has been given in \cite{Li2019a}. Specifically, the proximity operator
$\mathrm{prox}_{\frac{1}{\rho}\bm{\phi}}$ at $\mathbf{z}\in\mathbb{R}^p$ has the form $\mathrm{prox}_{\frac{1}{\rho}\bm{\phi}}(\mathbf{z}):=[\mu_j:j\in\mathbb{N}_p]$, where for all $j\in\mathbb{N}_p$,
\begin{equation}\label{max-function}
\mu_j:=
\left\{\begin{array}{ll}
z_{j}+\frac{1}{\rho}, \ &\mathrm{if}\ z_{j}<1-\frac{1}{\rho},\\
z_{j}, \ & \mathrm{if}\ z_{j}>1,\\
 1, \ & \mathrm{if}\ z_j\in[1-\frac{1}{\rho}, 1].
\end{array}\right.   
\end{equation}
\end{appendices}

%
\section*{Conflict of interest}
The authors declare that they have no conflict of interest.



\end{document}